\newcommand{\addpoint}[1]{#1\ ---\ }
\newtheoremstyle{thm}
     {1.5ex plus .3ex minus .1ex}
     {1ex plus .3ex minus .1ex}
     {\itshape}
     {}
     {\sffamily}
     {---}
     {0em}
     {$\bullet$\hbox{\ }#1\hbox{\ }#2}
\theoremstyle{thm}
\newtheorem{definition}{Definition}[section]
\newtheorem{theorem}[definition]{Theorem}
\newtheorem{lemma}[definition]{Lemma}
\newtheorem{proposition}[definition]{Proposition}
\newtheorem{corollary}[definition]{Corollary}
\newtheoremstyle{note}
     {1ex plus .3ex minus .1ex}
     {1ex plus .3ex minus .1ex}
     {}
     {}
     {\itshape}
     {.}
     {1em}
     {}
\theoremstyle{note}
\newlength{\remaining}\setlength{\remaining}{\textwidth}\addtolength{\remaining}{-2em}%
\newcommand{\verticale}{\ar@{--}[d]}
\def\calli#1{\expandafter\def\csname
  #1\endcsname{\mathcal{#1}}}	
\def\sets#1{\expandafter\def\csname
  bb#1\endcsname{\mathbb{#1}}}	
\def\rebar#1{\expandafter\def\csname #1bar\endcsname{\overline{\csname
      #1\endcsname}}}		
\def\gothify#1{\expandafter\def\csname
  #1#1#1\endcsname{\mathfrak{#1}}}	
\renewcommand{\SS}{\mathcal{S}}
\newcommand{\LL}{\mathcal{L}}
\newcommand{\FFFbar}{\overline{\FFF}}
\newcommand{\RR}{\mathscr{R}}
\renewcommand{\SS}{\mathcal{S}}
\newcommand{\C}{\mathscr{C}}
\newcommand{\up}[1]{\,\uparrow #1}
\newcommand{\Up}[1]{\,\Uparrow #1}
\newcommand{\pr}{\mathbb{P}}
\newcommand{\prq}{\mathbb{Q}}
\newcommand{\XS}{X}
\newcommand{\Cstar}{\mathfrak{C}}
\newcommand{\slgb}{\mbox{$\sigma$-al}\-ge\-bra}
\newcommand{\BM}{\partial\M}
\newcommand{\tq}{\;:\;}
\newcommand{\leads}{\Rightarrow}
\newcommand{\lleads}{\Leftrightarrow}
\newcommand{\cro}[2]{\langle\langle #1,#2\rangle\rangle}
\newcommand{\rest}[1]{\bigl|_{#1}}
\newcommand{\height}{\tau}
\newcommand{\un}{\mathbf{1}}
\begin{document}

\mainmatter

\begin{center}
\huge\bfseries\sffamily
Markovian Dynamics of Concurrent Systems
\normalfont
\end{center}

\bigskip
\begin{center}
  Samy Abbes\\
IRIF (UMR CNRS 8243)/Universit\'e Paris Diderot \\
\texttt{samy.abbes@univ-paris-diderot.fr}\\[1em]
\end{center}

\begin{abstract}
Monoid actions of trace monoids over finite sets are powerful models of concurrent systems---for instance they encompass the class of $1$-safe Petri nets.
We characterise Markov measures attached to concurrent systems by finitely many parameters with suitable normalisation conditions. These conditions involve polynomials related to the combinatorics of the monoid and of the monoid action. These parameters generalise to concurrent systems the coefficients of the transition matrix of a Markov chain.

A natural problem is the existence of the uniform measure for every concurrent system. We prove this existence under an irreducibility condition. The uniform measure of a concurrent system is characterised by a real number, the characteristic root of the action, and a function of pairs of states, the Parry cocyle. A new combinatorial inversion formula allows to identify a polynomial of which the characteristic root is the smallest positive root. Examples based on simple combinatorial tilings are studied.
\end{abstract}

\begin{flushleft}
  \small{\bfseries AMS Subject Classification:} 37B10, 68Q87
\end{flushleft}

\section{Introduction}
\label{sec:introduction}

Concurrency theory covers the study of systems with concurrency features. An important class of models introduced for this purpose is that of \emph{trace monoids}, also called  \emph{free partially commutative monoids} and \emph{heaps monoids} in the literature~\cite{cartier69,viennot86,diekert90,diekert95,VNBi,krob03}. A trace monoid is a presented monoid $\M$ of the form:
\begin{gather*}
  \M=\langle\Sigma\;|\;ab=ba\rangle\,,
\end{gather*}
for $(a,b)$ ranging over a fixed symmetric and irreflexive relation on the set $\Sigma$ of generators. It allows to express the concurrency of two ``actions'' $a$ and~$b$---whatever their exact nature is---through the commutativity relation $ab=ba$. It renders the essential feature that the order of execution of these two actions is irrelevant. 

In most relevant applications, the notion of \emph{state} of a system
is essential. However, trace monoids are stateless models: given some sequence of actions $(\alpha_1,\dots,\alpha_n)$ corresponding to the element $\alpha_1\cdot\ldots\cdot\alpha_n$ of the monoid~$\M$, any new action $a\in\Sigma$ can be immediately executed, resulting in the new element $\alpha_1\cdot\ldots\cdot\alpha_n\cdot a\in\M$. A natural way to enrich a trace monoid $\M$ with a notion of state is to consider a \emph{monoid action} of $\M$ on the desired set of states---of course, specifying the relations between states and actions is still the responsibility of the model designer.

Hence, the concurrent systems we will consider in this paper have the form of a
pair $(\XS,\M)$, where $X$ is a finite set of states, and $\M$ is a
trace monoid together with a right monoid action
$\varphi:\XS\times\M\to\XS$ of $\M$ over~$\XS$, denoted by
$\varphi(\alpha,x)=\alpha\cdot x$. If $1$ denotes the unit element of
the monoid, the monoid action obeys the two following axioms:
\begin{align*}
  \alpha\cdot 1&=\alpha\,,&\alpha\cdot(x\cdot y)&=(\alpha\cdot x)\cdot y\,.
\end{align*}

Before explaining how the dynamics of such systems can be introduced, it is worth observing that two particular instances of them are well-known in classical systems theory and  in Concurrency theory. The first instance is that of acceptor graphs. Indeed, let $A$ be a finite set, and let $M$ be an $0$-$1$-matrix over~$A$. Hence $M=(M_{i,j})_{(i,j)\in A\times A}$
satisfies $M_{i,j}\in\{0,1\}$, and the system can go\footnote{A note on terminology: we avoid the use of the word \emph{deterministic} since it has different meanings according to the scientific community that uses it. In Probability theory, \emph{deterministic} is opposed to \emph{probabilistic}. On the contrary, in Computer science, \emph{deterministic} is opposed to \emph{non-deterministic}, regardless of the existence of a probabilistic context; and \emph{non-deterministic} refers to the existence of a choice in the evolution of the system for a given action. In the Computer science language, all the systems considered in this paper are \emph{deterministic}.} from state $i$ to
state $j$ in one step if and only if $M_{i,j}=1$\,. Let:
\begin{align*}
  \Sigma&=\bigl\{(i,j)\in A\times A\tq M_{i,j}=1\bigr\}\,,
\end{align*}
thought of as the set of admissible elementary actions, and let
$\Sigma^*$ be the free monoid generated by the admissible actions.

Consider a symbol $\bot\notin A$, and put $A'=A\cup\{\bot\}$. Then the
dynamics of the system corresponds to the unique right action
$A'\times\Sigma^*\to A'$ such that:
\begin{align*}
  \forall i\in A'\quad\forall (j,k)\in \Sigma\quad i\cdot(j,k)=
  \begin{cases}
    \bot,&\text{if $i\neq j$ or if $i=\bot$}\\
k,&\text{if $i=j$}
  \end{cases}
\end{align*}

The acceptor graph defines thus a ``partial action'' of the free
monoid $\Sigma^*$ over the set of states~$A$. The action is
``partial'' in the sense that not all actions are always enabled,
depending on the current state of the system. But, up to adding a
distinguished state~$\bot$, partial actions are actually a particular
instance of a standard action $A'\times\Sigma^*\to A'$, with the
additional feature that the trajectories to be considered are those
that avoid to ever hit the distinguished state~$\bot$. Hence these
correspond to partial actions without concurrency; and at a theoretical
level, partial actions can be treated as normal actions of a free
monoid over a set of states.

The generalisation of ``partial actions'' to concurrent systems will
be central in this work. Our definitive definition of a \emph{concurrent system} will be 
a triple $(X,\M,\bot)$, where $X$ is a finite set of states, $\M$~is a trace
monoid together with a right action $\XS\times\M\to\XS$, and $\bot$ is a forbidden state. ``Trajectories'' to consider will be those that avoid to ever hit the distinguished
state~$\bot$.

Yet another class of concurrent systems has a natural counterpart elsewhere in Concurrency theory. Indeed, it is very natural to encode $1$-safe Petri nets~\cite{reisig85,desel95} as concurrent systems defined above: see~\cite{abbes17} for details and examples.

\medskip
We introduce the dynamics of concurrent systems by means of probability.  We define a
\emph{Markov measure} over a concurrent system~$(\XS,\M,\bot)$, as a
family $\pr=(\pr_\alpha)_{\alpha\in\XS}$ of probability measures
indexed by the set of states, and obeying the following chain rule,
without giving the full definition of all terms for now:
\begin{gather}
  \label{eq:50}
  \forall\alpha\in\XS\quad\forall x,y\in\M\quad\pr_\alpha(x\cdot
  y)=\pr_\alpha(x)\pr_{\alpha\cdot x}(y).
\end{gather}

Furthermore, we shall impose that $\pr_\alpha(x)=0$ holds whenever $\alpha\cdot x=\bot$, \emph{i.e.}, whenever $x$ is a forbidden action in state~$\alpha$.

If $X$ is a singleton, or if $\pr_\alpha$ is independent of~$\alpha$,
then the property becomes $\pr(x\cdot y)=\pr(x)\pr(y)$, which
corresponds to Bernoulli measures---or memoryless measures. Bernoulli
measures on trace monoids have been the topic of a previous work
co-authored with J.~Mairesse~\cite{abbesmair14}. The present work
covers the generalisation to the Markovian case.

The defining property~(\ref{eq:50}) is natural. It corresponds to the intuitive Markov property that the probabilistic evolution of the system, at any stage, is entirely given by its current state. If $\pr_\alpha(x)>0$ in~(\ref{eq:50}), dividing by $\pr_\alpha(x)$ yields the following formulation in terms of conditional probabilities: $\pr_\alpha(x\cdot y\big|x)=\pr_{\alpha\cdot x}(y)$. In words: starting from the state~$\alpha$, and conditionally to the execution of the trajectory~$x$, the evolution of the system is indistinguishable in probability from the execution of the system starting from the state~$\alpha\cdot x$.

Our first contribution is to characterise all Markov measures associated with a trace monoid action $(\M,\XS)$ through a finite family of probabilistic parameters
with certain normalisation conditions. These normalisation conditions
are polynomial in the parameters; they are expressed by means of a
fundamental combinatorial device, the M\"obius transform,
particularised for the framework of trace monoids. This first contribution does not take into account the notion of forbidden state.

Another topic is the analysis of concurrent systems with a forbidden state~$\bot$, hence with the additional constraint $\pr_\alpha(x)=0$ whenever $x$
is an action not enabled at state~$\alpha$. The existence of Markov measures respecting this constraint is not obvious, even with the general description of Markov measures at hand. For acceptor graphs---hence, without the concurrency
feature---, the Parry construction \cite{parry64,kitchens97,lind95}
demonstrates the existence of a ``uniform'' measure, Markovian, and
charging with positive probability every admissible finite trajectory
if the acceptor graph is irreducible. The generalisation of this
measure to concurrent systems is the second contribution of this
paper.

We construct the uniform measure for irreducible concurrent systems and we show that this is indeed a Markov measure $\nu=(\nu_\alpha)_{\alpha\in\XS}$. It has the
following special form, for $x$ enabled at state~$\alpha$:
\begin{align*}
  \nu_\alpha(x)&=t_0^{|x|}\Gamma(\alpha,\alpha\cdot x),\qquad\text{with }
|x|=\text{length of $x$.}
\end{align*}
Here, $t_0$ is a real number lying in $(0,1)$ and is called the
\emph{characteristic root} of the concurrent system; and
$\Gamma(\cdot,\cdot):\XS\times\XS\to\bbR$ is a positive function,
called the \emph{Parry cocyle}, with the following property:
$\Gamma(\alpha,\beta)\Gamma(\beta,\gamma)=\Gamma(\alpha,\gamma)$. 


Determining $t_0$ and the Parry cocycle is challenging when facing
concrete examples. We introduce a simple combinatorial example of
concurrent systems, related to tiling models. We illustrate our
theoretical results by solving the problem of determining both the
characteristic root and the Parry cocycle in several different ways
for this example.

The growth series $\sum_{x\in\M}t^{|x|}$ of a trace monoid $\M$ is a
well studied object. It is given by the inverse of the M\"obius
polynomial of the monoid (also called the independence polynomial in a
graph theoretic context~\cite{levit05}). This directly relates the
radius of convergence of the series with the root of smallest modulus
of the M\"obius polynomial. One method for obtaining this inversion
formula is based on the theory of formal series over partially
commutative variables~\cite{cartier69}. In our context, where not only
the monoid but also its monoid action over a finite set is involved, we show that a slight generalisation of this theory allows to obtain similar results. It is thus another contribution of this paper
to introduce \emph{formal fibred series over non commutative
  variables}, and to show their application for determining the radius
of convergence of growth series of the form:
\begin{gather*}
  Z_\alpha(t)=\sum_{\substack{x\in\M\tq\\\text{$x$ enabled at $\alpha$}}}t^{|x|}\,.
\end{gather*}

The characteristic root $t_0$ corresponds to the common radius of
convergence of the above power series, for $\alpha$ ranging over the
set of states.  We obtain a generalisation of the M\"obius polynomial
of a trace monoid, of which $t_0$ is the smallest root. This amounts
to encoding not only the combinatorics of the monoid, but also of the
action, into a certain polynomial with integer coefficients.

\medskip

\subsubsection*{Organisation}
\label{sec:organisation}
\ In order to keep the paper self-contained, we have gathered in
Section~\ref{sec:preliminaries} some background on trace monoids. We
have also included a summary on Bernoulli measures for trace monoids;
although we shall not use the related results directly, their
comparison with subsequent results on Markov measures will probably be
useful to the reader.

Section~\ref{sec:acti-part-acti} introduces trace monoid actions and concurrent systems.

Section~\ref{sec:markov-measures} is devoted to a general study of
Markov measures for actions of trace monoids.  Probabilistic parameters are characterised and Markov
measures are given a realisation through the Markov chain of
states-and-cliques.

In Section~\ref{sec:uniform-measures}, we consider 
the case of concurrent systems where not every action is necessarily always enabled. The goal is to show the existence of a
Markov measure with the same support as the one of the monoid action. To this aim, we
construct the uniform measure related to the concurrent system, and we show that
it is Markovian. In the course of this construction, we introduce the
characteristic root of the concurrent system and its Parry cocycle. We give an
inversion formula which allows to interpret the characteristic root of
the concurrent system as a particular root of a polynomial attached to the concurrent system.

\section{Preliminaries}
\label{sec:preliminaries}

In this section we collect the needed facts concerning trace monoids
and the construction of associated Bernoulli measures. Bibliographical references are gathered in~\S~\ref{sec:references}.

\subsection{Trace monoids}
\label{sec:trace-monoids}

\subsubsection{Independence relation}
\label{sec:definition}

An alphabet $\Sigma$ is a finite non empty set, and we will always
consider that $|\Sigma|\geq2$\,. Elements of $\Sigma$ are called
letters or pieces. An independence relation $I$ is a binary
irreflexive and symmetric relation on~$\Sigma$.



\subsubsection{Trace monoid}
\label{sec:trace-monoid}

Let $\RR_I$ be the smallest congruence on the free monoid $\Sigma^*$
containing all pairs $(ab,ba)$ for $(a,b)$ ranging over~$I$. The trace
monoid $\M=\M(\Sigma,I)$ is the quotient monoid $\M=\Sigma^*/\RR_I$\,.

\subsubsection{Traces}
\label{sec:traces}

Elements of $\M$ are called traces. Letters of $\Sigma$ are
identified with their images in $\M$ through the canonical morphisms
$\Sigma\to\Sigma^*\to\M$. Concatenation in $\M$ is denoted with the
dot~``$\cdot$''.  The unit element is denoted~``$1$'' and is called
the empty trace.

\subsubsection{Immediate equivalence}
\label{sec:immed-equiv}

Let $\R_I$ be the binary relation on $\Sigma^*$ containing all pairs
of words of the form $(xaby,xbay)$, for $x,y\in\Sigma^*$ and $(a,b)\in
I$. Then $\RR_I$ is the reflexive and transitive closure of~$\R_I$\,. 

\subsubsection{Length of traces}
\label{sec:length-traces}

Since two $\R_I$-related words have the same length, and since $\RR_I$
is the transitive and reflexive closure of~$\R_I$\,, all representative
words of a given trace have the same length. This defines thus a
mapping $|\cdot|:\M\to\bbN$, and $|x|$ is called the length of
trace~$x$. The length is additive: $|x\cdot y|=|x|+|y|$.  The empty
trace is the unique trace of length~$0$, and letters are the only
traces of length~$1$.

\subsubsection{Divisibility relation}
\label{sec:divis-relat}

The left divisibility relation in $\M$ is denoted~``$\leq$'': $x\leq
y\iff\exists z\in\M\quad y=x\cdot z$. As in any monoid, it is
reflexive and transitive. From the additivity of length, it is also
easily seen to be anti-symmetric, hence $(\M,\leq)$ is a partially
ordered set.

\subsubsection{Compatible traces}
\label{sec:compatible-traces}

Two traces $x,y\in\M$ have a least upper bound $x\vee y$ with respect
to~$\leq$ if and only if there exists a trace $z\in\M$ such that
$x\leq z$ and $y\leq z$. In this case, $x$~and $y$ are said to be
compatible.

\subsubsection{Cliques}
\label{sec:cliques}

A clique of $\M$ is a trace of the form $x=a_1\cdot\ldots\cdot a_i$\,,
where $a_i$ are pairwise distinct letters such that $i\neq j\implies
(a_i,a_j)\in I$. Consider the pair $(\Sigma,I)$ as a graph. Then
cliques are in bijection with those subgraphs of $(\Sigma,I)$ which
are complete graphs---these are indeed called cliques in a graph
theoretic context. Restricted to cliques, the ordering relation $\leq$
corresponds to the inclusion of subsets, when seeing cliques as
subsets of~$\Sigma$.  We denote by $\C$ the set of cliques, and by
$\Cstar=\C\setminus\{1\}$ the set of non empty cliques.

\subsubsection{Parallelism of cliques}
\label{sec:parallelism-cliques}

Two cliques $c,c'\in\C$ are said to be parallel, denoted $c\parallel
c'$, whenever $c\cdot c'\in\C$. When seeing cliques as subsets
of~$\Sigma$, this is equivalent to: $c\times c'\subseteq I$. In
particular, and since single letters are cliques by themselves, we
have $a\parallel a'\iff (a,a')\in I$ if $a,a'\in\Sigma$.

\subsubsection{Cartier-Foata relation}
\label{sec:cart-foata-relat}

Two cliques $c,c'\in\C$ are Cartier-Foata compatible, denoted by $c\to
c'$, if for every letter $b\in c'$ there exists a letter $a\in c$ such
that $(a,b)\notin I$. 

\subsubsection{Cartier-Foata normal form. Height of traces}
\label{sec:cartier-foata-normal}

For every non empty trace $x\in\M\setminus\{1\}$, there exists a
unique integer $n\geq1$ and a unique sequence $(c_1,\ldots,c_n)$ of
non empty cliques such that:
\begin{inparaenum}[(1)]
  \item $c_i\to c_{i+1}$ holds for all $i\in\{1,\ldots,n-1\}$; and
  \item $x=c_1\cdot\ldots\cdot c_n$\,. 
\end{inparaenum}
The sequence $(c_1,\ldots,c_n)$ is called the Cartier-Foata decomposition
(or normal form) of~$x$.

The integer $n$ is called the height of~$x$, denoted by
$n=\height(x)$. By convention, we put $\height(1)=0$. 

\subsubsection{Heaps of pieces}
\label{sec:heaps-pieces}

Heaps of pieces provide a visually intuitive representation of
traces. Picture each letter as a piece, or domino, falling from top to
bottom until it reaches the ground or a previously placed piece. Two
pieces $a,b\in\Sigma$ are bound to fall in a parallel way with respect
to each other if and only if $(a,b)\in I$, which renders the
commutativity relation $a\cdot b=b\cdot a$. Then it is part of
Viennot's theory that heaps of pieces thus obtained are in bijection
with traces. We illustrate the heap representation in
Figure~\ref{fig:pojqpqaaa}.

\begin{figure}
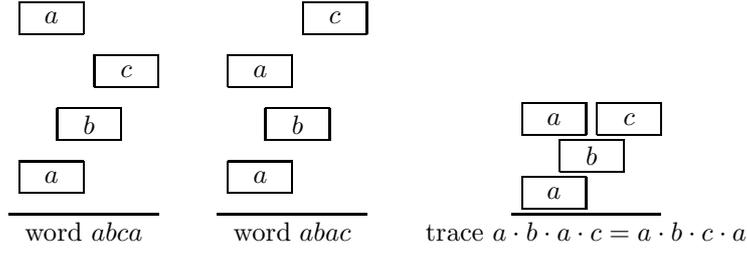

  \centering
  \begin{tabular}{ccc}
\xy
<.1em,0em>:
(0,6)="G",
"G"+(12,6)*{a},
"G";"G"+(24,0)**@{-};"G"+(24,12)**@{-};"G"+(0,12)**@{-};"G"**@{-},
(14,26)="G",
"G"+(12,6)*{b},
"G";"G"+(24,0)**@{-};"G"+(24,12)**@{-};"G"+(0,12)**@{-};"G"**@{-},
(0,66)="G",
"G"+(12,6)*{a},
"G";"G"+(24,0)**@{-};"G"+(24,12)**@{-};"G"+(0,12)**@{-};"G"**@{-},
(28,46)="G",
"G"+(12,6)*{c},
"G";"G"+(24,0)**@{-};"G"+(24,12)**@{-};"G"+(0,12)**@{-};"G"**@{-},
(-4,-2);(52,-2)**@{-}
\endxy& \quad
\xy
<.1em,0em>:
(0,6)="G",
"G"+(12,6)*{a},
"G";"G"+(24,0)**@{-};"G"+(24,12)**@{-};"G"+(0,12)**@{-};"G"**@{-},
(14,26)="G",
"G"+(12,6)*{b},
"G";"G"+(24,0)**@{-};"G"+(24,12)**@{-};"G"+(0,12)**@{-};"G"**@{-},
(0,46)="G",
"G"+(12,6)*{a},
"G";"G"+(24,0)**@{-};"G"+(24,12)**@{-};"G"+(0,12)**@{-};"G"**@{-},
(28,66)="G",
"G"+(12,6)*{c},
"G";"G"+(24,0)**@{-};"G"+(24,12)**@{-};"G"+(0,12)**@{-};"G"**@{-},
(-4,-2);(52,-2)**@{-}
\endxy
&\quad \xy
<.1em,0em>:
0="G",
"G"+(12,6)*{a},
"G";"G"+(24,0)**@{-};"G"+(24,12)**@{-};"G"+(0,12)**@{-};"G"**@{-},
(14,14)="G",
"G"+(12,6)*{b},
"G";"G"+(24,0)**@{-};"G"+(24,12)**@{-};"G"+(0,12)**@{-};"G"**@{-},
(0,28)="G",
"G"+(12,6)*{a},
"G";"G"+(24,0)**@{-};"G"+(24,12)**@{-};"G"+(0,12)**@{-};"G"**@{-},
(28,28)="G",
"G"+(12,6)*{c},
"G";"G"+(24,0)**@{-};"G"+(24,12)**@{-};"G"+(0,12)**@{-};"G"**@{-},
(-4,-2);(52,-2)**@{-}
\endxy\\
word $abca$&\quad word $abac$&\quad trace $a\cdot b\cdot a\cdot c=a\cdot b\cdot c\cdot a$
\end{tabular}
\caption{\small\textsl{Two congruent words
    and the resulting heap (trace) for~$\M=\langle a,b,c\ |\ a\cdot
    c=c\cdot a\rangle$\,. Pieces $a$ and $c$ fall in a parallel way,
    but not $a$ and~$b$ and neither $c$ and~$b$.}}
\label{fig:pojqpqaaa}
\end{figure}

\subsubsection{Layers of heaps and Cartier-Foata normal form}
\label{sec:layers-heaps-cartier}

In the heap of pieces representation of traces, the cliques that appear
in the Cartier-Foata decomposition correspond to the successive
horizontal layers, from bottom to top, that compose the heap. For the
trace depicted in Figure~\ref{fig:pojqpqaaa}, the Cartier-Foata
decomposition is $a\to b\to a\cdot c$.

\subsection{Growth series and M\"obius inversion formulas}
\label{sec:growth-series-mobius}

\subsubsection{Growth series and M\"obius polynomial}
\label{sec:growth-series}
\label{sec:mobius-polynomial}

The growth series of the monoid $\M$ is the power series $H(t)$ defined by:
\begin{gather*}
  H(t)=\sum_{x\in\M}t^{|x|}\,.
\end{gather*}



The M\"obius polynomial $\mu_\M(t)$ of $\M$ is defined by:
\begin{gather*}
  \mu_\M(t)=\sum_{\gamma\in\C}(-1)^{|\gamma|}t^{|\gamma|}\,.
\end{gather*}
For instance, for $\M=\langle a,b,c\ |\ a\cdot c=c\cdot a\rangle$,
illustrated in Figure~\ref{fig:pojqpqaaa}, one has
$\C=\{1,a,b,c,a\cdot c\}$ and thus $\mu_\M(t)=1-3t+t^2$\,.

\subsubsection{First M\"obius inversion formula}
\label{sec:first-mobi-invers}

As a formal series, the growth series $H(t)$ is rational, inverse of
the M\"obius polynomial:
\begin{gather}
\label{eq:2}
  H(t)=\frac1{\mu_\M(t)}\,.
\end{gather}

\subsubsection{Smallest root of the M\"obius polynomial}
\label{sec:domin-sing-zt}

The M\"obius polynomial has a unique root of smallest modulus, which
is real and lies in~$(0,1)$. This root coincides with the unique
dominant singularity of~$H(t)$. 



\subsubsection{M\"obius transform}
\label{sec:mobius-transform}

Let $f:\C\to\bbR$ be a function (it could actually take its values in
any commutative monoid). The M\"obius transform of $f$ is the function
$h:\C\to\bbR$ defined by:
\begin{gather}
\label{eq:3}
\forall c\in\C\quad h(c)=\sum_{c'\in\C\tq c'\geq
  c}(-1)^{|c'|-|c|}f(c')\,.
\end{gather}

\subsubsection{Graded M\"obius transform}
\label{sec:grad-mobi-transf}

If $f:\M\to\bbR$ is defined on~$\M$, and not only on~$\C$, we extend
its M\"obius transform as follows. It is defined as in~(\ref{eq:3})
on~$\C$. For $\height(x)\geq2$, let $x=c_1\to\ldots\to c_n$ be the
Cartier-Foata decomposition of~$x$, and let $y=c_1\cdot\ldots\cdot
c_{n-1}$\,. Then we define:
\begin{gather*}
  h(x)=\sum_{c\in\C\tq c\geq c_n}(-1)^{|c|-|c_n|}f(y\cdot c)\,.
\end{gather*}
The function $h:\M\to\bbR$ is called the graded M\"obius transform
of~$f$. 

\subsubsection{Second M\"obius inversion formula}
\label{sec:second-mobi-invers}

Define $\M(1)=\C$ and, for each non empty trace $x\in\M$, put
$\M(x)=\{y\in\M\tq\height(y)=\height(x)\wedge x\leq y\}$\,.  Then, for
any function $f:\M\to\bbR$, with graded M\"obius transform
$h:\M\to\bbR$, holds:
\begin{gather}
\label{eq:17}
\forall x\in\M\quad f(x)=\sum_{y\in\M(x)}h(y)\,.
\end{gather}

Conversely, if $f,h:\M\to\bbR$ are two functions such
that~(\ref{eq:17}) holds, then $h$ is the graded M\"obius transform
of~$f$.

\subsection{Boundary and compactification}
\label{sec:compactification}

\subsubsection{Generalised traces}
\label{sec:compactification-1}

There exists a canonical partial order \mbox{$(\Mbar,\leq)$}, which
elements are called generalised traces, and with the following
properties:
\begin{enumerate}
\tightlist
\item\label{item:1} Every non decreasing sequence in $\Mbar$ has a
  least upper bound.
\item\label{item:2} There is a canonical embedding of partial orders
  $\iota:\M\to\Mbar$, hence we identify $\M$ as a subset of~$\Mbar$.
\item Every element of $\Mbar$ is the least upper bound
  $\bigvee\{x_n\tq n\geq1\}$ of a non decreasing sequence
  $(x_n)_{n\geq1}$ with $x_n\in\M$ for all $n\geq1$.
\end{enumerate}

\subsubsection{Infinite traces}
\label{sec:infinite-traces}

Let $\BM=\Mbar\setminus\M$. Elements of $\BM$ are called infinite
traces, and $\BM$ is the boundary at infinity, or simply the boundary
of~$\M$. For every $\xi\in\BM$, there exists a unique infinite
sequence $(c_i)_{i\geq1}$ of non empty cliques such that holds:
\begin{align*}
  \xi&=\bigvee_{i\geq1}(c_1\cdot\ldots\cdot c_i)\,,&\forall i\geq
  1\quad c_i&\to c_{i+1}\,.
\end{align*}
The sequence $(c_i)_{i\geq1}$ extends to infinite traces the
Cartier-Foata decomposition of traces (\S~\ref{sec:cartier-foata-normal}). 



\subsubsection{Ordering on generalised traces}
\label{sec:order-gener-trac-1}

For each integer $n\geq0$, the $n$-topping is the mapping
$\kappa_n:\Mbar\to\M$ defined by $\kappa_n(\xi)=c_1\cdot\ldots\cdot
c_n$\,, where $(c_i)_{i\geq1}$ is the extended Cartier-Foata
decomposition of~$\xi$, maybe with $c_i=1$ whenever $\xi\in\M$ and
$i>\height(\xi)$.

The ordering on traces is defined by the left divisibility relation
(\S~\ref{sec:divis-relat}). Its extension on $\Mbar$ can be
characterised as follows: for all $\xi,\xi'\in\Mbar$, $\xi\leq\xi'$
holds if and only if $\kappa_n(\xi)\leq\kappa_n(\xi')$ for all
integers $n\geq0$.  For $x\in\M$ and $\xi\in\BM$, the relation
$x\leq\xi$ reduces to this:
\begin{gather*}
  x\leq\xi\iff x\leq\kappa_{\height(x)}(\xi)\,.
\end{gather*}
The visual intuition of this result is illustrated on
Figure~\ref{fig:pppqqqaax}.

\newsavebox{\mycaptioni}\savebox{\mycaptioni}{\raisebox{.5ex}{\xy<.1em,0em>:(0,0);(10,0)**@{.}\endxy}}
\newsavebox{\mycaptionii}\savebox{\mycaptionii}{\raisebox{.5ex}{\xy<.1em,0em>:(0,0);(10,0)**@{-}\endxy}}

\begin{figure}
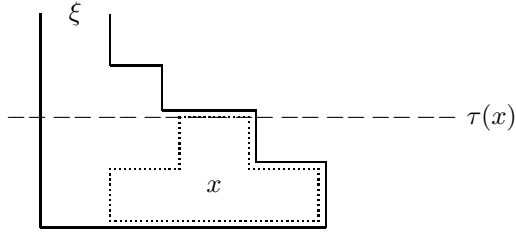

  \centering
$\xy
<.13em,0em>:
(20,0);
(20,15)**@{.};(40,15)**@{.};(40,30)**@{.};(60,30)**@{.};(60,15)**@{.};(80,15)**@{.};(80,0)**@{.};(20,0)**@{.};
(0,-2);(0,60)**@{-};
(20,60);(20,45)**@{-};(35,45)**@{-};(35,32)**@{-};(62,32)**@{-};(62,17)**@{-};(82,17)**@{-};(82,-2)**@{-};(0,-2)**@{-};
(-10,30);(120,30)**{--},(130,30)*{\height(x)};
(10,60)*{\xi},(50,10)*{x}
\endxy$
\caption{\small\textsl{Illustrating the ordering relation $x\leq\xi$
    between a trace~$x$, symbolised by the dotted
    line~\usebox{\mycaptioni}\,, and an infinite trace~$\xi$,
    symbolised by the solid line~\usebox{\mycaptionii}\,. The height
    $\height(x)$ of $x$ is depicted.}%
}
\label{fig:pppqqqaax}
\end{figure}

\subsubsection{Elementary cylinders}
\label{sec:elem-cylind}

 For $x\in\M$, the elementary cylinder of base $x$ is the subset of
 $\BM$ denoted by $\up x$ and defined by:
 \begin{gather*}
   \up x=\{\xi\in\BM\tq x\leq\xi\}\,.
 \end{gather*}

 The existence of least upper bounds for compatible traces
 (\S~\ref{sec:compatible-traces}) implies that elementary cylinders
 intersect as follows:
\begin{gather*}
  \up x\;\cap \up y=
  \begin{cases}
    \emptyset,&\text{if $x$ and $y$ are not compatible,}\\
\up(x\vee y),&\text{if $x$ and $y$ are compatible.}
  \end{cases}
\end{gather*}

\subsubsection{Topology on $\BM$ and on $\Mbar$. Compactness}
\label{sec:topology-m-mbar}
\label{sec:compactness}

For each $x\in\M$, let the full elementary cylinder $\Up x$ be defined
by:
\begin{gather}
\label{eq:45}
  \Up x=\{\xi\in\Mbar\tq x\leq \xi\}\,.
\end{gather}

The topology we consider on $\Mbar$ corresponds to the Lawson topology
in Domain theory (see~\cite{gierz03}). It is the join of the
topologies $\SS$ and~$\LL$, i.e., the smallest topology containing
both $\SS$ and~$\LL$. The topologies $\SS$ (Scott topology) and $\LL$
(lower topology) are the topologies generated by the following subsets:
\begin{gather*}
\begin{array}{ll}
  \SS:&\text{$\Up x$, for $x\in\M$}\\
  \LL:&\text{$\Mbar\setminus(\Up x)$, for $x\in\Mbar$}
\end{array}
\end{gather*}

The boundary $\BM$ is equipped with the restriction of this topology.
For our concern, we will only need the following facts\ regarding
these topologies:
\begin{enumerate}
\tightlist
\item For each trace $x\in\M$, the singleton $\{x\}$ is both open and
  closed in~$\Mbar$.
\item For each trace $x\in\M$, $\up x$ is both open and closed
  in~$\BM$, and $\Up x$ is both open and closed in~$\Mbar$.
\item The space $\Mbar$ is metrisable and compact; the subset $\BM$ is
  closed in~$\Mbar$.
\end{enumerate}

\subsection{Bernoulli and uniform measures on the boundary}
\label{sec:bern-meas-unif}

\subsubsection{\slgb s and $\pi$-systems on $\Mbar$ and on $\BM$}
\label{sec:slgb}

We equip $\Mbar$ and $\BM$ with their respective Borel \slgb s,
$\FFFbar$~and~$\FFF$. The \slgb\ on $\Mbar$ is generated by the
collection of full elementary cylinders~$\Up x$, for $x$ ranging
over~$\M$, defined in~\S~\ref{sec:topology-m-mbar}.

By the intersection property of cylinders (\S~\ref{sec:elem-cylind},
see also~\S~\ref{sec:compatible-traces}), we observe that both
collections:
\begin{align*}
  \{\emptyset\}&\cup\{\up x\tq x\in\M\}\,,&
\{\emptyset\}&\cup\{\Up x\tq x\in\M\}\,,
\end{align*}
are $\pi$-systems (i.e.: stable under finite intersections) generating
$\FFF$ and~$\FFFbar$, respectively.

\subsubsection{Valuations}
\label{sec:valuations}

A valuation on $\M$ is a function $f:\M\to\bbR$ such that $f(1)=1$ and
$f(x\cdot y)=f(x)f(y)$ for all $x,y\in\M$. Valuations are in bijection
with families or real numbers~$(\lambda_a)_{a\in\Sigma}$\,. The
correspondence assigns to each valuation $f$ the
collection~$(f(a))_{a\in\Sigma}$\,. The valuation is uniform if it is
constant on~$\Sigma$. It corresponds to:
\begin{gather*}
\forall x\in\M\quad  f(x)=p^{|x|}\,,\qquad\forall a\in\Sigma\quad p=f(a)\,.
\end{gather*}

\subsubsection{Bernoulli measures}
\label{sec:bernoulli-measures}

A Bernoulli measure is a probability measure on $(\BM,\FFF)$ such that:
\begin{gather*}
  \forall x,y\in\M\quad\pr(\up (x\cdot y))=\pr(\up x)\pr(\up y)\,.
\end{gather*}
If $\pr$ is a Bernoulli measure, the function $f:x\in\M\mapsto\pr(\up
x)$ is the valuation associated with~$\pr$. 

\subsubsection{M\"obius valuations}
\label{sec:mobius-valuations}

Let $f:\M\to\bbR$ be a valuation, and let $h:\M\to\bbR$ be the
M\"obius transform of~$f$ (\S~\ref{sec:mobius-transform}). By
definition, $f$~is a M\"obius valuation if:
\begin{align*}
  h(1)&=0\,,&\forall c\in\Cstar\quad h(c)\geq0\,.
\end{align*}

\subsubsection{Characterisation of Bernoulli measures}
\label{sec:caract-bern-meas}

If $\pr$ is a Bernoulli measure, then its associated valuation is
M\"obius. Conversely, let $f:\M\to\bbR$ be a M\"obius valuation. Then
there exists a unique Bernoulli measure $\pr$ such that $\pr(\up
x)=f(x)$ for all $x\in\M$.

\subsubsection{Random decomposition of infinite traces}
\label{sec:rand-decomp-infin}

For each integer $i\geq1$, let $C_i:\BM\to\Cstar$ be the mapping which
assigns to an infinite trace $\xi$ the $i^{\text{th}}$~clique in its
extended Cartier-Foata decomposition. Let also:
\begin{gather*}
  Y_i=C_1\cdot\ldots\cdot C_i\,,
\end{gather*}
be defined for all non negative integers, with $Y_0=1$ for $i=0$.
Then $(C_i)_{i\geq1}$ and $(Y_i)_{i\geq0}$ are two sequences of
measurable mappings.

\subsubsection{Markov chain of cliques}
\label{sec:markov-chain-cliques}

Under a Bernoulli measure~$\pr$, the sequence $(C_i)_{i\geq1}$ is a
Markov chain. The law of $C_1$ is given by the
restriction~$h\rest{\Cstar}$\,, where $h:\C\to\bbR$ is the M\"obius
transform of the associated valuation. In case the valuation
$f(\cdot)=\pr(\up\cdot)$ is positive on~$\M$, then the transition
matrix $P=(P_{c,c'})_{(c,c')\in\Cstar\times\Cstar}$ of the Markov
chain $(C_i)_{i\geq1}$ is given by:
 \begin{align*}
   P_{c,c'}&=\un_{\{c\to
     c'\}}\frac{h(c')}{g(c)}\,,&g(c)&=\sum_{c'\in\Cstar\tq c\to c'}h(c')\,.
 \end{align*}

\subsubsection{Uniform measure}
\label{sec:uniform-measure}

There exists a unique Bernoulli measure $\nu$ such that the associated
valuation is uniform. It is given by:
\begin{align*}
  \forall x\in\M\quad\nu(\up x)=p_0^{|x|}\,,
\end{align*}
where $p_0$ is the unique root of smallest modulus of the M\"obius
polynomial~$\mu_\M$ (\S~\ref{sec:mobius-polynomial}).

\subsubsection{Complement on elementary cylinders}
\label{sec:compl-elem-cylind}

It is useful to relate elementary cylinders
(\S~\ref{sec:elem-cylind}) and $Y_i$-measurable subsets of~$\BM$,
where $Y_i$ has been defined in~\S~\ref{sec:rand-decomp-infin}. Let
$x$ be a non empty trace of height
$n=\height(x)$. 
Then the elementary cylinder $\up x$ decomposes as the following
disjoint union:
\begin{align}
  \label{eq:16} \up x&=\bigcup_{z\in\M(x)}\{Y_n=z\}\,,
\end{align}
where $\M(x)=\{z\in\M\tq\height(z)=\height(x)\wedge x\leq z\}$ has
been introduced in~\S~\ref{sec:second-mobi-invers}.

\subsection{Bibliographical references}
\label{sec:references}

The basics of trace monoids are covered, under different points of
view, in \cite{cartier69,viennot86,diekert90,diekert95}. A famous
application of trace monoids to the combinatorics of directed animals
is found in~\cite{bousquet-melou92}.

The first M\"obius inversion formula is established in
\cite{cartier69} and in~\cite{viennot86}. The properties of the
M\"obius polynomial of a trace monoid are studied in
\cite{goldwurm00,krob03,csikvari13}. The second inversion formula for
graded M\"obius transform is established in~\cite{abbesmair14} and in~\cite{abbes17:_unifor} in the context of braid monoids. The
simple M\"obius transform is a particular instance of the notion of
M\"obius transform~\cite{rota64,stanley97}.

The space of generalised traces is introduced
in~\cite{abbesmair14}. It is a particular instance of the completion
of certain presented monoids studied in~\cite{abbes08}, and is also a
particular instance of the completion of partial
orders~\cite{keimel09}. The topological properties are rather
standard; they can, for instance, be established within the framework
of~\cite{abbes06:_bifin}.

Bernoulli and uniform measures for trace monoids are introduced
in~\cite{abbesmair14}, in which only irreducible trace monoids were
considered. Relaxing this assumption does not present major
difficulties, as shown in~\cite{abbes:_unifor}.

\section{Concurrent systems}
\label{sec:acti-part-acti}

\subsection{Actions of trace monoids}
\label{sec:total-actions}

Recall that a \emph{right monoid action}, or simply a \emph{monoid action}, of a monoid $\M$ with unit $1$ over a set $X$ is
defined by a mapping $\varphi:X\times \M\to X$, denoted by
$\varphi(\alpha,x)=\alpha\cdot x$, and satisfying the following
properties:
\begin{inparaenum}[(1)]
  \item $\forall\alpha\in X\quad \alpha\cdot1=\alpha$\,;
  \item $\forall\alpha\in X\quad\forall x,y\in\M\quad (\alpha\cdot
    x)\cdot y=\alpha\cdot(x\cdot y)$\,.
\end{inparaenum}

In the remaining of the paper, we will only consider actions of trace monoids over finite sets, generically denoted by the pair $(\M,\XS)$, where $\M=\M(\Sigma,I)$.  In
  this context, elements of $\XS$ are called \emph{states}, elements of\/
  $\Sigma$ are called \emph{elementary actions}, and elements of $\M$ are
  called \emph{actions}. Intuitively, the state $\alpha\cdot x$ corresponds to the state where the system arrives after having performed the action~$x$, starting from state~$\alpha$.

Assume given an action of a trace monoid $\M=\M(\Sigma,I)$
over~$\XS$. Let $\alpha\in\XS$, and let $a,a'\in\Sigma$ with
$a\parallel a'$ (see~\S~\ref{sec:parallelism-cliques}). Then $a\cdot
a'=a'\cdot a$ and therefore: $(\alpha\cdot a)\cdot a' =(\alpha\cdot
a')\cdot a$. The following proposition states a converse to this
observation.

\begin{proposition}
  \label{prop:1}
Let $\M=\M(\Sigma,I)$ be a trace monoid, let $\XS$ be a finite set, and let
$(\varphi_a)_{a\in\Sigma}$ be a family of mappings
$\varphi_a:\XS\to\XS$ indexed by~$\Sigma$. Assume that holds:
\begin{gather*}
  \forall a,a'\in\Sigma\quad
a\parallel a'\implies\varphi_a\circ\varphi_{a'}=\varphi_{a'}\circ\varphi_a\,.
\end{gather*}
Then there exists a unique action $\varphi:\XS\times\M\to\XS$ such
that $\varphi(\cdot,a)=\varphi_a$ for all $a\in\Sigma$.
\end{proposition}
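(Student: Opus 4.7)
The plan is to build the action first on the free monoid $\Sigma^*$ and then to push it down to the quotient $\M=\Sigma^*/\RR_I$ using the hypothesis on commuting independent generators.

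First I would define $\psi:\XS\times\Sigma^*\to\XS$ by setting, for each word $w=a_1\cdots a_n\in\Sigma^*$,
\begin{gather*}
  \psi(\alpha,w)=\varphi_{a_n}\circ\varphi_{a_{n-1}}\circ\cdots\circ\varphi_{a_1}(\alpha),
\end{gather*}
with $\psi(\alpha,\epsilon)=\alpha$ for the empty word. Because concatenation of words corresponds to composition of the associated maps in the correct order, $\psi$ is straightforwardly verified to be a right action of the free monoid $\Sigma^*$ on~$\XS$: the two axioms follow from $\psi_{uv}=\psi_v\circ\psi_u$, which is immediate from the definition. No hypothesis on the $\varphi_a$ is needed at this stage.

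Next I would show that $\psi$ descends to $\M$. Define on $\Sigma^*$ the relation $u\sim v$ iff $\psi(\cdot,u)=\psi(\cdot,v)$ as maps $\XS\to\XS$. This is clearly an equivalence relation, and because $\psi$ is a right action of $\Sigma^*$, it is a two-sided monoid congruence: if $u\sim v$ then for any $x,y\in\Sigma^*$ and $\alpha\in\XS$, $\psi(\alpha,xuy)=\psi(\psi(\psi(\alpha,x),u),y)=\psi(\psi(\psi(\alpha,x),v),y)=\psi(\alpha,xvy)$. The hypothesis $\varphi_a\circ\varphi_{a'}=\varphi_{a'}\circ\varphi_a$ for $(a,a')\in I$ says precisely that $ab\sim ba$ whenever $(a,b)\in I$, since both sides give the map $\varphi_b\circ\varphi_a=\varphi_a\circ\varphi_b$. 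Hence $\sim$ contains all pairs generating $\RR_I$; being a congruence containing these generators, $\sim$ contains $\RR_I$ itself. Therefore $\psi$ is constant on $\RR_I$-classes and factors through a well-defined mapping $\varphi:\XS\times\M\to\XS$. The two action axioms for $\varphi$ transfer immediately from those of $\psi$, since the canonical projection $\Sigma^*\to\M$ is a monoid morphism sending $1$ to~$1$ and respecting products.

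For uniqueness, suppose $\varphi'$ is any other total action with $\varphi'(\cdot,a)=\varphi_a$ for all $a\in\Sigma$. Any trace $x\in\M$ admits a representative word $a_1\cdots a_n\in\Sigma^*$, and the action axioms force
\begin{gather*}
  \varphi'(\alpha,x)=\varphi'(\alpha,a_1\cdots a_n)=\varphi_{a_n}\circ\cdots\circ\varphi_{a_1}(\alpha)=\varphi(\alpha,x),
\end{gather*}
so $\varphi'=\varphi$. The only nontrivial step of the argument is the passage to the quotient, and the difficulty there is purely formal: it is settled by the observation that the set $\sim$ is automatically a congruence because $\psi$ is a free-monoid action, so one only needs to check the hypothesis on the single relations $ab=ba$ for $(a,b)\in I$.
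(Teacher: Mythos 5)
Your proof is correct and is exactly the argument the paper compresses into the single sentence ``immediate by the universal property of quotient monoids'': you build the right action of the free monoid $\Sigma^*$, observe that the kernel congruence of the induced map $\Sigma^*\to\XS^\XS$ contains the generating pairs $(ab,ba)$ for $(a,b)\in I$ and hence contains $\RR_I$, and conclude by factoring through the quotient, with uniqueness forced by the generators. No discrepancy with the paper's approach; yours just spells out the details.
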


\begin{proof}
  Immediate by the universal property of quotient monoids.
\end{proof}

\subsection{Concurrent systems and specification through partial actions}
\label{sec:partial-actions}

\subsubsection{Definition}
\label{sec:definition-1}

For several applications, it is desirable that some actions may not be
always ``enabled'', depending on the current state. It is this very
feature that we aim at in introducing the following definition.

\begin{definition}
  \label{def:14}
  A \emph{concurrent system} is a triple $(\M,\XS,\bot)$, where $\M=\M(\Sigma,I)$ is a trace monoid acting on the finite set~$X$, and where $\bot$ is a distinguished element of\/~$X$, satisfying:
  \begin{enumerate}
  \item $\bot\cdot a=\bot$ for all $a\in\Sigma$, and thus $\bot\cdot x=\bot$ for all $x\in\M$; and
  \item For every state $\alpha\in\XS\setminus\{\bot\}$, there exists at least one letter $a\in\Sigma$ such that $\alpha\cdot a\neq\bot$.
  \end{enumerate}
\end{definition}

Here the distinguished state $\bot$ is not thought of as a ``real state'' of the system, but rather as an artefact rendering the fact that it is impossible for the system to execute actions leading to~$\bot$.

Partial actions introduced in the following definition allow for the effective specification of concurrent systems: see Proposition~\ref{prop:2} below.

\begin{definition}
  \label{def:2}
A \emph{partial action} of a trace monoid $\M=\M(\Sigma,I)$ over a finite set
$\XS$ is defined by:
\begin{enumerate}
\item\label{item:3} A collection of subsets
  $(\Sigma(\alpha))_{\alpha\in\XS}$ with
  $\Sigma(\alpha)\subseteq\Sigma$ and $\Sigma(\alpha)\neq\emptyset$,
  where $\Sigma(\alpha)$ corresponds to the set of enabled elementary
  actions at state~$\alpha$.

\item\label{item:4} A collection $(\psi_\alpha)_{\alpha\in\XS}$ of
  mappings, with $\psi_\alpha:\Sigma(\alpha)\to\XS$ denoted
  $\psi_\alpha(a)=\alpha\cdot a$ whenever $a\in\Sigma(\alpha)$, such
  that for any $a,b\in\Sigma$, if $a\in\Sigma(\alpha)$ and if
  $a\parallel b$, then:
  \begin{enumerate}[(i)]
  \item\label{item:5} Either $b\in\Sigma(\alpha)$; and in this case
    $a\in\Sigma(\alpha\cdot b)$ and $b\in\Sigma(\alpha\cdot a)$ and
    $(\alpha\cdot a)\cdot b=(\alpha\cdot b)\cdot a$;
  \item\label{item:6} Or $b\notin\Sigma(\alpha)$; and in this case
    $b\notin\Sigma(\alpha\cdot a)$.
  \end{enumerate}
\end{enumerate}
\end{definition}

The intuition explaining the two properties of point~\ref{item:4} in
the above definition is that an elementary action $b\in\Sigma$ which is not
enabled at some state $\alpha\in\XS$ suffers from some sort of lock on
it. The idea is that a parallel action $a\parallel b$ cannot
unlock~$b$; and neither can it lock $b$ if $b$ was enabled.
See a concrete example in~\S~\ref{sec:tip-top-action} below.

Partial actions can be used for the specifications of concurrent systems, as shown by the following result. See an example of application below in~\S~\ref{sec:tip-top-action}.

\begin{proposition}
  \label{prop:2}
Assume given a partial action with the
same notations as in Definition\/~{\normalfont\ref{def:2}}. Let $\bot$ be an symbol
not in~$\XS$, and let $\XS'=\XS\cup\{\bot\}$. For each $\alpha\in\XS$,
extend $\psi_\alpha:\Sigma(\alpha)\to\XS$ to a mapping
$\psi_\alpha:\Sigma\to\XS'$ by putting $\psi_\alpha(a)=\bot$ if
$a\notin\Sigma(\alpha)$, and put $\psi_\bot(\cdot)=\bot$.

Then there exists a unique monoid action of $\M$ over $\XS'$ such that
  $\alpha\cdot a=\psi_\alpha(a)$ for all $\alpha\in\XS'$ and for all
  $a\in\Sigma$. This monoid action defines a concurrent system $(\M,\XS',\bot)$, said to be associated to the partial action.
\end{proposition}

\begin{proof}
  For each $a\in\Sigma$, let $\varphi_a:\XS'\to\XS'$ be defined by
  $\varphi_a(\alpha)=\psi_\alpha(a)$. We verify that
  $(\varphi_a)_{a\in\Sigma}$ satisfies the condition of
  Proposition~\ref{prop:1}. For this, let $a,b\in\Sigma$ with
  $a\parallel b$, and let $\alpha\in\XS'$. Clearly, for $\alpha=\bot$,
  one has
  $\varphi_a\circ\varphi_b(\bot)=\bot=\varphi_b\circ\varphi_a(\bot)$. And
  for $\alpha\in\XS$, we discuss different cases.

  Assume that neither $a\in\Sigma(\alpha)$ nor $b\in\Sigma(\alpha)$
  hold. Then: $\varphi_a(\varphi_b(\alpha))=\varphi_a(\bot)=\bot$ and
  $\varphi_b(\varphi_a(\alpha))=\varphi_b(\bot)=\bot$, hence
  $\varphi_a\circ\varphi_b(\alpha)=\varphi_b\circ\varphi_a(\alpha)$.

  Assume that both $a\in\Sigma(\alpha)$ and $b\in\Sigma(\alpha)$
  hold. Then, by point~(\ref{item:5}) of Definition~\ref{def:2}, the
  equality
  $\varphi_a\circ\varphi_b(\alpha)=\varphi_b\circ\varphi_a(\alpha)$
  holds.

  Assume that, of $a$ and~$b$, only one is enabled at state~$\alpha$,
  say $a\in\Sigma(\alpha)$ and $b\notin\Sigma(\alpha)$. Then on the
  one hand: $\varphi_a(\varphi_b(\alpha))=\varphi_a(\bot)=\bot$. And
  on the other hand, $b\notin\Sigma(\alpha\cdot a)$ by
  point~(\ref{item:6}) of Definition~\ref{def:2}, and thus:
  $\varphi_b(\varphi_a(\alpha))=\bot$, whence the equality
  $\varphi_a\circ\varphi_b(\alpha)=\varphi_b\circ\varphi_a(\alpha)$.

  The result follows then from Proposition~\ref{prop:1}.
\end{proof}

\subsubsection{Trace language associated with a partial action}
\label{sec:trace-lang-assoc}

We introduce below a collection of trace languages (a trace language
is a subset of a trace monoid) associated with a concurrent system. Proposition~\ref{prop:3} will motivate the introduction of valuations in Section~\ref{sec:mark-meas-fibr}.

\begin{definition}
  \label{def:3}
  Let $(\M,\XS,\bot)$ be a concurrent system. For each state
  $\alpha\in\XS$, we define the subset $\M_\alpha\subseteq\M$ and the
  function $F_\alpha:\M\to\{0,1\}$ by:
\begin{align*}
  \M_\alpha&=\{x\in\M\tq \alpha\cdot x\neq\bot\}\,,&
F_\alpha(x)&=\un_{\{\alpha\cdot x\neq\bot\}}\,.
\end{align*}
The family $F=(F_\alpha)_{\alpha\in\XS}$ is called the support
valuation of the partial action.
\end{definition}

\begin{proposition}
  \label{prop:3}
For each state $\alpha\in\XS$ of a concurrent system $(\M,\XS,\bot)$,
the language $\M_\alpha$ is downward closed:
\begin{gather*}
  \forall x,y\in\M\quad x\in\M_\alpha\wedge
  y\leq x\implies y\in\M_\alpha\,.
\end{gather*}
The support valuation satisfies the following properties:
\begin{gather*}
  \forall\alpha\in\XS\setminus\{\bot\}\quad F_\alpha(1)=1\\
\forall\alpha\in\XS\quad\forall x,y\in\M\quad F_\alpha( x\cdot
y)=F_\alpha(x)F_{\alpha\cdot x}(y)\,.
\end{gather*}

\end{proposition}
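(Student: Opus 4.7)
The plan is to first observe that $\bot$ acts as an absorbing state for the total action $\XS'\times\M\to\XS'$ constructed in Proposition~\ref{prop:2}, and then derive the three stated properties as routine consequences of this absorption together with the action axioms.

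First I would verify that $\bot\cdot z=\bot$ for every $z\in\M$. By construction, $\varphi_a(\bot)=\psi_\bot(a)=\bot$ for every elementary action $a\in\Sigma$, so $\bot\cdot a=\bot$. Since every trace $z$ decomposes as a product of letters $z=a_1\cdot\ldots\cdot a_n$, repeated application of the axiom $(\alpha\cdot x)\cdot y=\alpha\cdot(x\cdot y)$ together with the above case $n=1$ gives $\bot\cdot z=\bot$ by a trivial induction on the length of~$z$.

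Next I would establish downward closure of $\M_\alpha$. Suppose $y\leq x$, so $x=y\cdot z$ for some $z\in\M$ by definition of $\leq$ (\S\ref{sec:divis-relat}). By the action axiom, $\alpha\cdot x=(\alpha\cdot y)\cdot z$. If $y\notin\M_\alpha$, then $\alpha\cdot y=\bot$, and the absorption property from the previous step gives $\alpha\cdot x=\bot\cdot z=\bot$, contradicting $x\in\M_\alpha$. Thus $y\in\M_\alpha$.

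For the properties of the support valuation, the identity $F_\alpha(1)=1$ for $\alpha\neq\bot$ is immediate from the axiom $\alpha\cdot 1=\alpha$. For the multiplicativity identity, I would again invoke associativity: $\alpha\cdot(x\cdot y)=(\alpha\cdot x)\cdot y$. If $\alpha\cdot x=\bot$, then absorption forces $\alpha\cdot(x\cdot y)=\bot$ as well, so both $F_\alpha(x\cdot y)$ and $F_\alpha(x)$ vanish and the equality $F_\alpha(x\cdot y)=F_\alpha(x)F_{\alpha\cdot x}(y)$ holds trivially (with the convention that $F_\bot$ is identically~$0$, consistent with Definition~\ref{def:3}). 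Otherwise $F_\alpha(x)=1$ and the equality reduces to $F_\alpha(x\cdot y)=F_{\alpha\cdot x}(y)$, which holds by the associativity equation above. The whole argument is routine; there is no substantial obstacle, the only subtle point being the clean formulation of the absorption property that makes the case distinction in the last identity work uniformly.
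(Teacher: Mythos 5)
Your proof is correct and follows essentially the same route as the paper: establish that $\bot$ is absorbing by an easy induction, deduce downward closure of $\M_\alpha$ from the action axiom via $x=y\cdot z$, and read off the two properties of the support valuation. The only difference is that you spell out the case analysis for the multiplicativity of $F_\alpha$, which the paper leaves implicit (and note that $F_\bot\equiv 0$ is not a convention but follows directly from Definition~\ref{def:3} together with absorption).
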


\begin{proof}
  By contraposition, assume that $x,y\in\M$ are such that
  $y\notin\M_\alpha$ and $y\leq x$, we have to prove that
  $x\notin\M_\alpha$\,. Let $z\in\M$ be such that $x=y\cdot z$. Then,
  by the action property, one has: $\alpha\cdot x=(\alpha\cdot y)\cdot
  z=\bot\cdot z$ and an easy induction shows that $\bot\cdot z=\bot$
  for all $z\in\M$. Hence $\alpha\cdot x=\bot$ and thus
  $x\notin\M_\alpha$\,. The two properties of $F_\alpha$ follow.
\end{proof}

\subsection{Examples}
\label{sec:examples}

We introduce examples of concurrent systems, some of which will
serve as running examples in the remaining of the paper.

\subsubsection{A natural action}
\label{sec:natural-action}

The heap of pieces interpretation of traces
(\S~\ref{sec:heaps-pieces}) provides an example of a concurrent system,
which we informally describe.

Any heap of pieces, and thus any trace, can be seen as a labelled
partial order~\cite{viennot86,krattenthaler06}, the labels ranging
over the alphabet~$\Sigma$. As such, it has a number of maximal
pieces; let $M(x)$ be the labelled set of maximal pieces of a
heap~$x$. Let $\XS$ be the finite collection of all possible labelled
sets~$M(x)$, for $x$ ranging over~$\M$. Then $\M$ acts on $\XS$ by
$M\cdot x=M(y\cdot x)$, where $y$ is the flat heap that identifies
with~$M$.

Hence $M\cdot x$ represents the set of pieces that can be seen, from
above, when piling up the heap $x$ over~$M$.  In case $\M$ is the free
monoid~$\Sigma^*$, then $\XS$ identifies with $\Sigma$ and
$\alpha\cdot x$ is the last letter of the word~$\alpha x$. Hence this
action generalises in a natural way the Markov chain model, where
states and actions coincide.

\subsubsection{Petri nets}
\label{sec:petri-nets}

Petri nets, also called place/transition systems and which come in
several variants, are a well known models for concurrency
introduced in the 1960's and with as various applications as workflow
control, formal verification of logical properties of systems,
diagnosis and monitoring of discrete events systems, and performance
evaluation and queuing network theory on the applied mathematics
side~\cite{murata89,baccelli92}. 


Note that the stochastic models usually attached to Petri nets, as
in~\cite{baccelli92} for instance, randomise the execution sequences
of the net, not their equivalence classes modulo commutativity of
independent transitions, and differ thus radically from our model. By contrast, the encoding of $1$-safe Petri nets by trace monoid actions is explained in~\cite{abbes17}.

\subsubsection{The tip-top action}
\label{sec:tip-top-action}

Let $\M$ be a trace monoid and let $\XS=\C$, the set of cliques
of~$\M$ (\S~\ref{sec:cliques}). We define the tip-top action by means of the
partial action $\varphi:\C\times\M\to\C$ with:
\begin{gather*}
  \forall \gamma\in\C\quad\Sigma(\gamma)=\{a\in\Sigma\tq
  a\leq\gamma\vee a\parallel\gamma\}\\
\forall\gamma\in\C\quad\forall a\in\Sigma(\gamma)\quad \varphi(\gamma,a)=
\begin{cases}
\gamma\setminus\{a\},&\text{if $a\leq\gamma$}\\
\gamma\cdot a,&\text{if $a\parallel\gamma$}
\end{cases}
\end{gather*}

Let us verify that the tip-top action satisfies the condition of
Definition~\ref{def:2}. Let $\gamma\in\C$, and let $a,b\in\Sigma$ be
such that $a\in\Sigma(\gamma)$ and $a\parallel b$. We discuss
different cases, observing that in all cases, $b\neq a$ since
$a\parallel b$.
\begin{enumerate}\tightlist
\item Assume that $a\leq\gamma$. Hence
  $\varphi(\gamma,a)=\gamma\setminus\{a\}$.
  \begin{enumerate}
  \item If $b\in\Sigma(\gamma)$.
    \begin{enumerate}
    \item If $b\leq\gamma$, then
      $\varphi(\gamma,b)=\gamma\setminus\{b\}$. Since $a\neq b$, we
      also have $a\leq\gamma\setminus\{b\}$ and thus
      $a\in\Sigma(\varphi(\gamma,b))$ and
      $\varphi(\varphi(\gamma,b),a)=\gamma\setminus\{a,b\}$. For the
      same reason, $b\leq\gamma\setminus\{a\}$ and thus
      $b\in\Sigma(\varphi(\gamma,a))$ and
      $\varphi(\varphi(\gamma,a),b)=\gamma\setminus\{a,b\}=\varphi(\varphi(\gamma,b),a)$.

    \item If $b\parallel\gamma$, then $\varphi(\gamma,b)=\gamma\cdot
      b$. Hence $a\leq\varphi(\gamma,b)$ and thus
      $a\in\Sigma(\varphi(\gamma,b))$ and
      $\varphi(\varphi(\gamma,b),a)=\gamma\cdot b\setminus\{a\}$. We
      also have $b\parallel\gamma\setminus\{a\}$ thus
      $b\in\Sigma(\varphi(\gamma,a))$ and
      $\varphi(\varphi(\gamma,a),b)=(\gamma\setminus\{a\})\cdot
      b=\varphi(\varphi(\gamma,b),a)$.
    \end{enumerate}
  \item If $b\notin\Sigma(\gamma)$. Then $b$ is not parallel to
    $\gamma\setminus\{a\}$ otherwise, since $b\parallel a$, $b$~would
    be parallel to~$\gamma$, contradicting
    $b\notin\Sigma(\gamma)$. And $b\leq\gamma\setminus\{a\}$ does not
    hold either, otherwise $b\leq\gamma$ would hold, contradicting
    $b\notin\Sigma(\gamma)$. We conclude that
    $b\notin\Sigma(\varphi(\gamma,a))$, as expected.
  \end{enumerate}
\item Assume that $a\parallel \gamma$. Hence
  $\varphi(\gamma,a)=\gamma\cdot a$.
  \begin{enumerate}
  \item If $b\in\Sigma(\gamma)$.
    \begin{enumerate}
    \item If $b\leq\gamma$, then
      $\varphi(\gamma,b)=\gamma\setminus\{b\}$. Therefore
      $a\parallel\varphi(\gamma,b)$ thus
      $a\in\Sigma(\varphi(\gamma,b))$ and
      $\varphi(\varphi(\gamma,b),a)=(\gamma\setminus\{b\})\cdot a$. On
      the other hand, $b\leq\varphi(\gamma,a)$, thus
      $b\in\Sigma(\varphi(\gamma,a))$ and
      $\varphi(\varphi(\gamma,a),b)=(\gamma\cdot
      a)\setminus\{b\}=\varphi(\varphi(\gamma,b),a)$.

    \item If $b\parallel\gamma$, then $\varphi(\gamma,b)=\gamma\cdot
      b$, but $a\parallel b$ hence $a\parallel\varphi(\gamma,b)$, thus
      $a\in\Sigma(\varphi(\gamma,b))$ and
      $\varphi(\varphi(\gamma,b),a)=\gamma\cdot b\cdot a$\,. On the
      other hand, we also have $b\parallel\varphi(\gamma,a)$ since
      $a\parallel b$, thus $b\in\Sigma(\varphi(\gamma,a))$ and
      $\varphi(\varphi(\gamma,a),b)=\gamma\cdot a\cdot
      b=\varphi(\varphi(\gamma,b),a)$.
    \end{enumerate}

  \item If $b\notin\Sigma(\gamma)$. Then $b\leq\varphi(\gamma,a)$
    would imply $b\leq\gamma$ since $a\neq b$, contradicting
    $b\notin\Sigma(\gamma)$. And $b\parallel\varphi(\gamma,a)$ would
    imply $b\parallel\gamma$, contradicting
    $b\notin\Sigma(\gamma)$. Hence $b\notin\Sigma(\varphi(\gamma,a))$,
    as expected.
  \end{enumerate}
\end{enumerate}

\subsubsection{Domino tilings}
\label{sec:tilings}

Tilings of plane surfaces by $2\times1$ and $1\times2$ dominoes are an
``extensively studied and well-understood lattice model in statistical
physics and combinatorics''~\cite[\S~1.1]{romik12}. Here, we point out
that domino tilings and their flips provide a vast source of examples
of partial actions of trace monoids, and thus of concurrent systems; we will focus on elementary
examples only, using them for illustrative purposes.  

Given a plane surface for which there exists at least one domino
tiling, the flips rotate two contiguous dominoes and leave unchanged
the rest of the tiling~\cite{saldanha95}, as illustrated in
Figure~\ref{fig:qqpaxw}.

\begin{figure}[t]
  \centering
\begin{tabular}{ccc}
  \begin{picture}(40,40)
    \multiput(0,0)(0,20){3}{\line(1,0){40}}
\multiput(0,0)(40,0){2}{\line(0,1){40}}
\put(20,20){\circle*{4}}
\end{picture}
&
\begin{picture}(40,40)
  \put(0,20){ \xy<.08em,0em>:
(20,10),*{\txt{flip}}
 \ar@{<->} (0,00);(40,0) <0pt>
 \endxy
}
\end{picture}
&  \begin{picture}(40,40)
    \multiput(0,0)(0,40){2}{\line(1,0){40}}
\multiput(0,0)(20,0){3}{\line(0,1){40}}
\put(20,20){\circle*{4}}
  \end{picture}
\end{tabular}
\caption{\small\textsl{Illustrating the action of a flip. The dot represents the
      centre of the rotation.}}
  \label{fig:qqpaxw}
\end{figure}
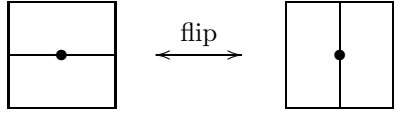

\subsubsection{The Rabati tilings}
\label{sec:rabati-tiling}

We define the $n$-Rabati strip, with $n\geq2$ an integer, as a strip
of size $n\times2$, as depicted on Figure~\ref{fig:rabatione} for
$n=7$. The in-line version is the one depicted on the figure; the
circled version corresponds to the same strip, where the right most
side is identified with the left most side.

\begin{figure}[b]
\begin{align*}
  \begin{picture}(140,40)
  \multiput(0,0)(0,40){2}{\line(1,0){140}}
  \multiput(0,0)(20,0){8}{\line(0,1){40}}
  \multiput(20,20)(20,0){6}{\circle*{4}}
  \put(23,22){$a_1$}
  \put(43,22){$a_2$}
  \put(63,22){$a_3$}
  \put(83,22){$a_4$}
  \put(103,22){$a_5$}
  \put(123,22){$a_6$}
\end{picture}
&&
  \begin{picture}(140,40)
  \multiput(0,0)(0,40){2}{\line(1,0){140}}
  \multiput(0,0)(40,0){2}{\line(0,1){40}}
  \multiput(60,0)(20,0){3}{\line(0,1){40}}
  \put(100,0){\line(0,1){40}}
  \multiput(0,20)(100,0){2}{\line(1,0){40}}
  \put(140,0){\line(0,1){40}}
  \multiput(20,20)(40,0){2}{\circle*{4}}
  \multiput(80,20)(40,0){2}{\circle*{4}}
  \put(23,22){$a_1$}
  \put(63,22){$a_3$}
  \put(83,22){$a_4$}
  \put(123,22){$a_6$}
\end{picture}
\end{align*}
\caption{\small\textsl{The in-line $n$-Rabati strip with $n=7$. The tiling
    on the left corresponds to the empty clique. The tiling on the
    right correspond to the clique~$a_1\cdot a_6$\,.}}
  \label{fig:rabatione}
\end{figure}
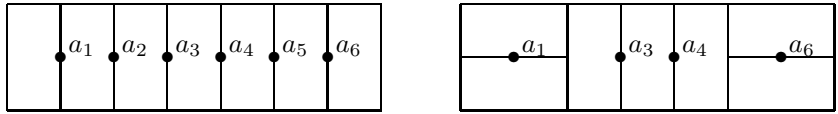

Consider the in-line version of the Rabati strip first. Then associate
the trace monoid with $n-1$ generators, say
$\Sigma=\{a_1,\ldots,a_{n-1}\}$, and independence relation $I$ defined by:
\begin{gather*}
  I=\{(a_i,a_j)\tq |i-j|\geq2\}\,.
\end{gather*}

For the circle version of the Rabati strip, we consider the trace
monoid with $n$ generators, $\Sigma=\{a_0,\ldots,a_{n-1}\}$, and
independence relation:
\begin{gather*}
  I=\{(a_i,a_j)\tq (|i-j|\mod n)\geq2\}
\end{gather*}

In both cases, the following facts are elementary (see
Figure~\ref{fig:rabatione}):
\begin{enumerate}\tightlist
\item The domino tilings of the Rabati strip are in bijection with the
  cliques of the associated trace monoid. The empty clique corresponds
  to the tiling where all dominoes are in vertical position.
\item Let $\gamma\in\C$ correspond to some tiling of the strip. The
  flip centred on the $i^{\text{th}}$~position is enabled if and only
  if the tip-top action $\varphi(\gamma,a_i)$ is enabled; and the
  result $\varphi(\gamma,a_i)$ of the tip-top action corresponds to
  the tiling resulting from the action of this flip on the
  tiling~$\gamma$.
\end{enumerate}

Hence, for tilings of Rabati strips, the action of flips corresponds
exactly to the tip-top action of the associated trace monoid. 

For tilings of general plane surfaces, we can still define a trace
monoid associated to the action of flips, where each flip corresponds
to a generator of the monoid, and commutativity of generators
corresponds to flips that operate on disjoint pairs of dominoes. The
combinatorial action of flips is encoded in the action of this trace
monoid on the set of tilings. But this action does not correspond in
general to the tip-top action of the trace monoid---it is, in general,
much more complex.

\subsection{Sub-concurrent systems and irreducibility}
\label{sec:sub-acti-irred}

We borrow some usual terminology and notions from sub-shift theory
\cite{seneta81,lind95} to qualify states and define irreducibility
classes of concurrent systems. Throughout this subsection, we fix a concurrent system $(\M,\XS,\bot)$.

\subsubsection{Communicating states and essential states}
\label{sec:essential-states}

A state $\alpha\neq\bot$ \emph{leads} to a state~$\beta$, which we denote by
$\alpha\leads\beta$, if there exists a non empty trace
$x\in\M_\alpha\setminus\{1\}$ such that $\alpha\cdot x=\beta$ (and
thus $\beta\neq\bot$). Two states $\alpha$ and $\beta$ \emph{communicate} if
both $\alpha\leads\beta$ and $\beta\leads\alpha$ hold, which we denote
by $\alpha\lleads\beta$. A state $\alpha$ is \emph{essential} if
$\alpha\neq\bot$ and holds: $\forall\beta\in\XS\quad
\alpha\leads\beta$ implies $\alpha\lleads\beta$. 

Note that, by the monoid action property, the relation $\alpha\leads\beta$ is
transitive on~$\XS\setminus\{\bot\}$. Also, since
$\Sigma(\alpha)\neq\emptyset$ according to Definition~\ref{def:2},
every state leads to at least one state. 



\subsubsection{Sub-concurrent systems and irreducible concurrent systems}
\label{sec:sub-actions}

\begin{definition}
Let $(\M,\XS,\bot)$ be a concurrent system with $\M=\M(\Sigma,I)$. Let $Y$ be a non empty subset of~$X\setminus\{\bot\}$, satisfying:
\begin{gather}
  \label{eq:51}
  \forall\alpha\in Y\quad\forall a\in\Sigma(\alpha)\quad \alpha\cdot a\in Y.
\end{gather}
Then the restriction of\/ $\XS\times\M\to\XS$ to $(Y\cup\{\bot\})\times\M\to
Y\cup\{\bot\}$ defines a concurrent system $(\M,Y\cup\{\bot\},\bot)$. Every concurrent system of this form is called a \emph{sub-concurrent system} of\/~$(\M,\XS,\bot)$.

The concurrent system $(\M,\XS,\bot)$ is said to be \emph{irreducible} if it has no other sub-concurrent system than itself.
\end{definition}

Sub-concurrent systems correspond  to non empty subsets $Y$ of $\XS\setminus\{\bot\}$ closed under
the communicating relation between states. Furthermore, states of an irreducible sub-concurrent system are essentials, both in the original concurrent system and in the sub-concurrent system.


Considering the decomposition of the graph $(X,\leads)$ into its set of strongly connected components yields the following elementary result.

\begin{proposition}
  \label{prop:5}
  The set of essential states of a concurrent system $(\M,\XS,\bot)$ is non empty and defines a sub-action. For
  each essential state~$\alpha$, the set $X_\alpha=\{\beta\in\XS\tq \alpha\leads\beta\}$
  defines an irreducible sub-concurrent system of $(\M,\XS,\bot)$. Every irreducible sub-concurrent system is of this form.
\end{proposition}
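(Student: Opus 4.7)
The plan is to exploit the transitivity of $\leads$ and the essentiality hypothesis to pin down the ``orbits'' $X_\alpha$ as the only irreducible sub-actions, and to address the three items in the order stated.

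For item~(1), I would take an essential $\alpha$ and any $a\in\Sigma(\alpha)$ and check that $\alpha\cdot a$ is again essential. Indeed $\alpha\cdot a\neq\bot$, and if $\alpha\cdot a\leads\beta$ then $\alpha\leads\beta$ by transitivity, so $\beta\leads\alpha$ by essentiality, and composing with $\alpha\leads\alpha\cdot a$ gives $\beta\leads\alpha\cdot a$. Hence $\alpha\cdot a\lleads\beta$, and iterating this over all enabled elementary actions shows that the essential states are closed under the action; non-emptiness is Proposition~\ref{prop:4}.

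For item~(2), I would first secure the loop $\alpha\leads\alpha$: picking any $a\in\Sigma(\alpha)$ (non-empty by Definition~\ref{def:2}), we have $\alpha\leads\alpha\cdot a$, essentiality gives $\alpha\cdot a\leads\alpha$, and transitivity closes the loop. Hence $\alpha\in X_\alpha$, so $X_\alpha$ is non-empty, and its closure under the action follows from transitivity: $\beta\in X_\alpha$ and $b\in\Sigma(\beta)$ yield $\alpha\leads\beta\leads\beta\cdot b$. For irreducibility, I would take a sub-action $Y\subseteq X_\alpha$, pick $\beta\in Y$, use $\alpha\leads\beta$ and essentiality of $\alpha$ to produce $\beta\leads\alpha$, invoke the sub-action property of $Y$ to get $\alpha\in Y$, and then use it once more to conclude that every $\gamma\in X_\alpha$ lies in~$Y$, forcing $Y=X_\alpha$.

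For item~(3), let $Y$ be an irreducible sub-action and fix $\alpha\in Y$. Even without knowing $\alpha$ is essential, $X_\alpha$ is a sub-action: it is non-empty thanks to some $a\in\Sigma(\alpha)$, and closed under $\leads$ by transitivity. Since $Y$ is closed under $\leads$, we have $X_\alpha\subseteq Y$, so irreducibility of $Y$ forces $X_\alpha=Y$. To check that $\alpha$ is essential in the original action, I would take any $\gamma$ with $\alpha\leads\gamma$; then $\gamma\in Y$, the same argument applied at $\gamma$ yields $X_\gamma=Y$, and $\alpha\in Y=X_\gamma$ delivers $\gamma\leads\alpha$.

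The main subtlety is the availability of the loop $\alpha\leads\alpha$ at an essential state, which is what allows $\alpha$ itself to appear inside $X_\alpha$ and keeps the sub-action arguments from drifting away from~$\alpha$. It rests on both the essentiality hypothesis and the guarantee $\Sigma(\alpha)\neq\emptyset$ built into Definition~\ref{def:2}; once the loop is in hand, the rest is straightforward bookkeeping with transitivity.
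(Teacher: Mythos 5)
Your proof is correct and follows essentially the same route as the paper's: closure of the set of essential states under $\leads$, identification of $X_\alpha$ as the strongly connected component of~$\alpha$, and the minimality argument for the converse. You additionally spell out two points the paper leaves implicit---the loop $\alpha\leads\alpha$ at an essential state (needed for $\alpha\in X_\alpha$) and the essentiality of any state of an irreducible sub-action---but this is careful bookkeeping within the same argument, not a different approach.
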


\subsubsection{Example}
\label{sec:example}

The tip-top action $\varphi:\C\times\M\to\C$ of a trace monoid over
its set of cliques (\S~\ref{sec:tip-top-action}) is irreducible, and
thus every clique is an essential state. Indeed, if $\gamma\neq1$,
then $\varphi(\gamma,\gamma)=1$; and for $\gamma=1$, pick any letter
$a\in\Sigma$, then
$\varphi(\varphi(1,a),a)=\varphi(a,a)=1$. Henceforth every clique
leads to the empty clique. And similarly it is easy to see that the
empty clique leads to every clique. We deduce $\gamma\lleads\gamma'$
for all cliques~$\gamma,\gamma'$\,, and thus the action is
irreducible.



\section{Markov measures}
\label{sec:markov-measures}

We now come to our main object of study, Markov measures associated
with concurrent systems. In this section, we examine Markov
measures associated with trace monoid actions.  Next section is devoted to
additional topics of study when the monoid action actually originates
from a partial action.

\subsection{Markov measures and fibred valuations}
\label{sec:mark-meas-fibr}

\subsubsection{Definitions}
\label{sec:definition-2}

We recall that the boundary $\BM$ of a trace monoid~$\M$
(\S~\ref{sec:infinite-traces}) is equipped with its Borel \slgb\
(\S~\ref{sec:slgb}).

\begin{definition}
  \label{def:4}
  Let $\XS\times\M\to\XS$ be a monoid action of a trace monoid $\M$
  over a finite set~$X$. A Markov measure associated with this action
  is a family\/ $\pr=(\pr_\alpha)_{\alpha\in\XS}$\,, where each\/
  $\pr_\alpha$ is a probability measure on the boundary\/~$\BM$,
  satisfying the following chain rule:
\begin{gather}
  \label{eq:4}
\forall\alpha\in\XS\quad\forall x,y\in\M\quad\pr_\alpha(\up(x\cdot
y))=\pr_\alpha(\up x)\pr_{\alpha\cdot x}(\up y)\,.
\end{gather}

A fibred valuation associated with this action is a family
$F=(f_\alpha)_{\alpha\in\XS}$\,, where each $f_\alpha:\M\to\bbR$ is a
real-valued mapping, with the  following property:
\begin{gather}
  \label{eq:5}
\forall\alpha\in\XS\quad f_\alpha(1)=1\\
 \label{eq:6}
\forall\alpha\in\XS\quad\forall x,y\in\M\quad f_\alpha(x\cdot
y)=f_\alpha(x) f_{\alpha\cdot x}(y)
\end{gather}

If\/ $\pr=(\pr_\alpha)_{\alpha\in\XS}$ is a Markov measure, the family
$F=(f_\alpha)_{\alpha\in\XS}$ defined by:
\begin{gather}
  \label{eq:7}
\forall\alpha\in\XS\quad\forall x\in\M\quad f_\alpha(x)=\pr_\alpha(\up x)
\end{gather}
is called the fibred valuation associated with\/~$\pr$\,. The support
valuation of\/ $\pr$ is the family $S=(s_\alpha)_{\alpha\in\XS}$ of
mappings $s_\alpha:\M\to\{0,1\}$ defined by:
\begin{gather}
  \label{eq:8}
\forall\alpha\in\XS\quad\forall x\in\M\quad s_\alpha(x)=
\begin{cases}
1,&\text{if $\pr_\alpha(\up x)>0$,}\\
0,&\text{if $\pr_\alpha(\up x)=0$.}
\end{cases}
\end{gather}
\end{definition}

\subsubsection{Elementary remarks}
\label{sec:elementary-remarks}

Bernoulli measures on~$\BM$ (\S~\ref{sec:bernoulli-measures})
correspond to Markov measures $\pr=(\pr_\alpha)_{\alpha\in\XS}$ such
that $\pr_\alpha$ is independent of~$\alpha$. In particular, Markov
measures exist. 

If $\M=\Sigma^*$ is the free monoid with its natural right action
on~$\Sigma$, then Markov measures correspond to Markov chains
on~$\Sigma$. Bernoulli measures correspond to Bernoulli sequences with
values in~$\Sigma$.

Clearly, the support valuation of a Markov measure is a fibred
valuation.

Since the family of elementary cylinders forms a $\pi$-system
(\S~\ref{sec:slgb}) of the Borel \slgb\ on~$\BM$, a Markov measure
$\pr=(\pr_\alpha)_{\alpha\in\XS}$ is entirely characterised by the
countable family of non negative numbers $\{\pr_\alpha(\up x)\tq
\alpha\in\XS,\ x\in\M\}$. In turn, by the chain rule~(\ref{eq:4}),
these are entirely determined by the finite family of non negative
numbers $\{f_\alpha(a)\tq\alpha\in\XS,\ a\in\Sigma\}$. Hence,
characterising Markov measures consists in finding adequate
normalisation conditions on finite families of non negative numbers of
the form~$(\lambda_\alpha(a))_{(\alpha,a)\in\XS\times\Sigma}$\,.

\subsubsection{Fibred valuations}
\label{sec:fibred-valuations}

A first task consists in elucidating the---rather simple---structure
of fibred valuations.

\begin{proposition}
\label{prop:6}  
Fibred valuations are in bijection with families
$(\lambda_\alpha(a))_{(\alpha,a)\in\XS\times\Sigma}$ of reals numbers
satisfying the following series of equations, that we call the
concurrency equations:
\begin{gather}
  \label{eq:9}
  \forall\alpha\in\XS\quad\forall a,b\in\Sigma\quad a\parallel
  b\implies \lambda_\alpha(a)\lambda_{\alpha\cdot
    a}(b)=\lambda_\alpha(b)\lambda_{\alpha\cdot b}(a)
\end{gather}
The bijection associates to a fibred valuation
$(f_\alpha)_{\alpha\in\XS}$ the family of
reals~$(f_\alpha(a))_{(\alpha,a)\in\XS\times\Sigma}$\,.
\end{proposition}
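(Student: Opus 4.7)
The plan is to set up a bijection in the most natural way and rely on the presentation of $\M$ as a quotient of $\Sigma^*$. Given a fibred valuation $f=(f_\alpha)_{\alpha\in\XS}$, set $\lambda_\alpha(a)=f_\alpha(a)$. For the forward direction, pick $a,b\in\Sigma$ with $a\parallel b$. In~$\M$ one has $a\cdot b=b\cdot a$, so applying the chain rule~(\ref{eq:6}) twice yields
\begin{equation*}
\lambda_\alpha(a)\lambda_{\alpha\cdot a}(b)=f_\alpha(a\cdot b)=f_\alpha(b\cdot a)=\lambda_\alpha(b)\lambda_{\alpha\cdot b}(a),
\end{equation*}
which is the concurrency equation~(\ref{eq:9}).

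For the reverse direction, assume we are given a family of reals $(\lambda_\alpha(a))$ satisfying the concurrency equations. I would first define, for each $\alpha\in\XS$, an auxiliary function $\tilde f_\alpha\colon \Sigma^*\to\bbR$ on the free monoid of words by induction on length: $\tilde f_\alpha(\varepsilon)=1$, and for a non-empty word $w=au$ with $a\in\Sigma$, $u\in\Sigma^*$,
\begin{equation*}
\tilde f_\alpha(au)=\lambda_\alpha(a)\,\tilde f_{\alpha\cdot a}(u).
\end{equation*}
By construction $\tilde f_\alpha(uv)=\tilde f_\alpha(u)\tilde f_{\alpha\cdot u}(v)$ for all words $u,v$, where $\alpha\cdot u$ is interpreted via the monoid action of~$\M$. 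The key step is to show that $\tilde f_\alpha$ is constant on $\RR_I$-classes, so that it descends to a function $f_\alpha\colon\M\to\bbR$.

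Since $\RR_I$ is the reflexive-transitive closure of the immediate equivalence $\R_I$ (\S~\ref{sec:immed-equiv}), it suffices to show invariance under a single elementary swap: for any $x,y\in\Sigma^*$ and $(a,b)\in I$, $\tilde f_\alpha(xaby)=\tilde f_\alpha(xbay)$. Setting $\beta=\alpha\cdot x$ (well-defined as an action of a trace on~$\alpha$), the multiplicative property of~$\tilde f_\alpha$ on words gives
\begin{equation*}
\tilde f_\alpha(xaby)=\tilde f_\alpha(x)\,\lambda_\beta(a)\lambda_{\beta\cdot a}(b)\,\tilde f_{\beta\cdot a\cdot b}(y),
\end{equation*}
and similarly for $xbay$ with $\lambda_\beta(b)\lambda_{\beta\cdot b}(a)$ and the state $\beta\cdot b\cdot a$. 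The middle factors agree by the concurrency equation~(\ref{eq:9}); the suffix factors agree because $\beta\cdot a\cdot b=\beta\cdot b\cdot a$ in~$\XS$ (the trace monoid relation already holds under the action). Hence $\tilde f_\alpha$ factors through $\M$, producing $f_\alpha\colon\M\to\bbR$ with $f_\alpha(1)=1$ and inheriting the chain rule~(\ref{eq:6}) from the analogous property on words. Thus $(f_\alpha)_{\alpha\in\XS}$ is a fibred valuation extending the data.

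Finally, the two constructions are inverse to each other: starting from $\lambda$, the restriction $f_\alpha(a)$ of the constructed $f_\alpha$ to letters recovers $\lambda_\alpha(a)$ by definition; starting from a fibred valuation $f$, the inductive reconstruction from the letter-values reproduces $f$ because $f$ itself satisfies the chain rule, and any two values agreeing on $\Sigma$ and satisfying~(\ref{eq:6}) must coincide. The main (and only) obstacle is the well-definedness step on~$\M$, which is precisely what the concurrency equations are tailored to ensure.
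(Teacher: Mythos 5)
Your proposal is correct and follows essentially the same route as the paper: the forward direction reads off the concurrency equations from $f_\alpha(a\cdot b)=f_\alpha(b\cdot a)$, and the converse defines the product formula on words of $\Sigma^*$ and checks invariance under a single elementary swap, which is exactly the paper's argument via the immediate equivalence $\R_I$. The only difference is presentational (inductive definition versus an explicit product, plus a slightly more explicit check that the two constructions are mutually inverse).
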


\begin{proof}
  If $(f_\alpha)_{\alpha\in\XS}$ is a fibred valuation, then the
  family of reals defined by $\lambda_\alpha(a)=f_\alpha(a)$
  satisfies the concurrency equations, since both the left and the
  right members of the equation in~(\ref{eq:9}) represent
  $f_\alpha(a\cdot b)$, when $a\parallel b$. 

  Conversely, let $(\lambda_\alpha(a))_{(\alpha,a)\in\XS\times\Sigma}$
  be a family of reals satisfying the concurrency equations. We
  show the existence of a fibred valuation
  $F=(f_\alpha)_{\alpha\in\XS}$ satisfying
  $f_\alpha(a)=\lambda_\alpha(a)$ for all $\alpha\in\XS$ and
  $a\in\Sigma$. 

  Define for each $\alpha\in\XS$ and each word $x=a_1\ldots a_n$
  of~$\Sigma^*$:
\begin{gather}
\label{eq:10}
f_\alpha(x)=\lambda_\alpha(a_1) \lambda_{\alpha\cdot a_1}(a_2)\cdots
\lambda_{\alpha\cdot a_1\cdot\ldots\cdot a_{n-1}}(a_n)\,,
\end{gather}
with $f_\alpha(1)=1$ by convention.  Then the concurrency equations
imply that $f_\alpha(x)=f_\alpha(y)$ for any word $y$ which is in
immediate equivalence with~$x$ (\S~\ref{sec:immed-equiv}). Therefore
$f_\alpha$ factorises through a mapping, still denoted
$f_\alpha:\M\to\bbR$ such that $f_\alpha(a)=\lambda_\alpha(a)$ for all
$a\in\Sigma$. The definition~(\ref{eq:10}) implies the validity of
the chain rule~(\ref{eq:6}), whence the sought fibred valuation
$F=(f_\alpha)_{\alpha\in\XS}$\,. 
\end{proof}

\begin{corollary}
  \label{cor:1}
  If\/ $(\pr_\alpha)_{\alpha\in\XS}$ is a Markov measure, then the
  numbers $\lambda_\alpha(a)=\pr_\alpha(\up a)$, for $\alpha$ ranging
  over $\XS$ and $a$ ranging over~$\Sigma$, satisfy the concurrency
  equations\/~\eqref{eq:9}.
\end{corollary}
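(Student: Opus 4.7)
The plan is to show that the family $F=(f_\alpha)_{\alpha\in\XS}$ defined by $f_\alpha(x)=\pr_\alpha(\up x)$ is a fibred valuation in the sense of Definition~\ref{def:4}, and then invoke Proposition~\ref{prop:6} to conclude. This reduces the corollary to a direct verification of axioms~\eqref{eq:5} and~\eqref{eq:6}.

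First I would check \eqref{eq:5}: since $\up 1=\{\xi\in\BM\tq 1\leq\xi\}=\BM$ (every generalised trace is above the empty trace), we have $f_\alpha(1)=\pr_\alpha(\BM)=1$ for each state~$\alpha$. Next, axiom~\eqref{eq:6} is immediate from the chain rule~\eqref{eq:4} defining a Markov measure: for any $\alpha\in\XS$ and any $x,y\in\M$,
\begin{gather*}
  f_\alpha(x\cdot y)=\pr_\alpha(\up(x\cdot y))=\pr_\alpha(\up x)\pr_{\alpha\cdot x}(\up y)=f_\alpha(x)f_{\alpha\cdot x}(y)\,.
\end{gather*}

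Hence $F$ is a fibred valuation, and by the bijection part of Proposition~\ref{prop:6}, the family of reals $(f_\alpha(a))_{(\alpha,a)\in\XS\times\Sigma}=(\lambda_\alpha(a))_{(\alpha,a)\in\XS\times\Sigma}$ satisfies the concurrency equations~\eqref{eq:9}. No real obstacle arises here; the content is entirely packaged into Proposition~\ref{prop:6}, and the only thing to observe for the present corollary is that the defining chain rule of a Markov measure is precisely the multiplicative property required of a fibred valuation. One could equivalently argue directly: for $a\parallel b$ one has $a\cdot b=b\cdot a$ in~$\M$, so applying the chain rule to both factorisations of this common trace gives $\lambda_\alpha(a)\lambda_{\alpha\cdot a}(b)=\pr_\alpha(\up(a\cdot b))=\pr_\alpha(\up(b\cdot a))=\lambda_\alpha(b)\lambda_{\alpha\cdot b}(a)$, which is~\eqref{eq:9}.
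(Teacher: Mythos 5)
Your proof is correct and matches the paper's (implicit) argument: the corollary is exactly the composite of the observation that $x\mapsto\pr_\alpha(\up x)$ is a fibred valuation (via $\up 1=\BM$ and the chain rule~\eqref{eq:4}) with the forward direction of Proposition~\ref{prop:6}, whose own proof is the one-line computation you reproduce at the end. Nothing is missing.
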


\subsection{Characterisation of Markov measures}
\label{sec:irred-mark-meas}

Our aim is to give necessary and sufficient conditions for a family
$(\lambda_\alpha(a))_{(\alpha,a)\in\XS\times\Sigma}$ of real numbers
to correspond to the family
$(f_\alpha(a))_{(\alpha,a)\in\XS\times\Sigma}$ obtained from the
fibred valuation of a Markov measure. Theorem~\ref{thr:1} below gives
necessary conditions, postponing their sufficiency to next subsection.

\subsubsection{M\"obius fibred valuations}
\label{sec:char-irred-mark}

The following definition generalises to fibred valuations the notion
of M\"obius valuation recalled in~\S~\ref{sec:mobius-valuations}.

\begin{definition}
  \label{def:6}
Let $(\M,\XS)$ be a trace monoid action, and let $F=(f_\alpha)_{\alpha\in\XS}$ be a fibred valuation. For each
  $\alpha\in\XS$, let $h_\alpha:\C\to\bbR$ be the M\"obius transform\/
  {\normalfont(\S~\ref{sec:mobius-transform})} of
  $f_\alpha:\C\to\bbR$. Then  $F$ is \emph{M\"obius fibred valuation} if it
  satisfies the following conditions, for all $\alpha\in\XS$:
\begin{align}
\label{eq:21}
  h_\alpha(1)&=0\,,\\
\label{eq:22}
\forall\gamma\in\Cstar\quad
 h_\alpha(\gamma)&\geq0\,.
\end{align}
\end{definition}

Remark that, as a consequence of the second M\"obius inversion formula
(\S~\ref{sec:second-mobi-invers}), a M\"obius valuation is necessarily
non negative.

\subsubsection{Identification theorem}
\label{sec:main-theorem}

The purpose of the following theorem is twofold. First, it provides
necessary conditions on families of numbers that generate a Markov
measure. Second, it shows that the random process of
``states-and-cliques'' (defined in the statement) has the structure of
a Markov chain, of which the initial measure and the transition matrix
are entirely determined by the Markov measure. In both aspects, it
generalises the corresponding results for Bernoulli measures
(\S~\ref{sec:caract-bern-meas} and~\S~\ref{sec:markov-chain-cliques}).

\begin{theorem}
  \label{thr:1}
  Let\/ $\pr=(\pr_\alpha)_{\alpha\in\XS}$ be a Markov measure associated to a trace monoid action~$(\M,\XS)$. Then:
  \begin{enumerate}\tightlist
  \item\label{item:7} The fibred valuation associated with\/ $\pr$ is
    M\"obius.
  \item\label{item:8} Let $(C_i)_{i\geq1}$ and $(Y_i)_{i\geq0}$ be the
    two random sequences introduced
    in\/~{\normalfont\S~\ref{sec:rand-decomp-infin}}, and for each
    integer $i\geq0$, let $X_i=\alpha\cdot Y_{i}$\,. Then, under the
    probability\/~$\pr_{\alpha_0}$\,, the sequence
    $(X_{i-1},C_i)_{i\geq1}$ is a homogeneous Markov chain with values
    in~$\XS\times\Cstar$.
  \item\label{item:9} The initial measure of the Markov chain
    $(X_{i-1},C_i)_{i\geq1}$ is $\delta_{\{\alpha_0\}}\otimes
    (h_{\alpha_0}\rest\Cstar)$\,, where
    $h_{\alpha_0}\rest\Cstar:\Cstar\to\bbR$ denotes the restriction of
    $h_{\alpha_0}$ to non empty cliques, and $h_{\alpha_0}:\C\to\bbR$
    is the M\"obius transform of the fibred valuation associated
    with\/~$\pr$.
  \item\label{item:10} The transition matrix of the chain is
    independent of~$\alpha_0$\,. It is given by:
    \begin{gather}
      \label{eq:12}
      P_{(\alpha,\gamma),(\alpha',\gamma')}=\un_{\{\alpha'=\alpha\cdot\gamma\}}\un_{\{\gamma\to\gamma'\}}
\frac{h_{\alpha'}(\gamma')}{g_{\alpha'}(\gamma)}\,,
    \end{gather}
    where $\gamma\to\gamma'$ denotes the Cartier-Foata relation
    between cliques {\normalfont(\S~\ref{sec:cart-foata-relat})}, and
    $g_{\alpha'}:\Cstar\to\bbR$ is the non negative function defined by:
    \begin{gather}
\label{eq:13}
\forall\gamma\in\Cstar\quad
g_{\alpha'}(\gamma)=\sum_{\gamma'\in\Cstar\tq\gamma\to\gamma'}h_{\alpha'}(\gamma')\,.
    \end{gather}
    If $g_{\alpha'}(\gamma)=0$, then the expression\/~\eqref{eq:12} is
    replaced by:
\begin{gather*}
  P_{(\alpha,\gamma),(\alpha',\gamma')}=0\,.
\end{gather*}
  \end{enumerate}
\end{theorem}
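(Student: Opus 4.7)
My approach is to translate the four items into direct probabilistic statements about the clique-valued process $(C_i)_{i\geq 1}$ under each $\pr_\alpha$. The cornerstone identity is
\[
h_\alpha(\gamma) \;=\; \pr_\alpha(C_1 = \gamma)\qquad\text{for every }\gamma\in\C,
\]
which settles item~(\ref{item:7}) in one stroke: nonnegativity when $\gamma\in\Cstar$ is automatic, and $h_\alpha(1)=\pr_\alpha(C_1=1)=0$ because every infinite trace has a nonempty first clique. I would establish this identity from the disjoint clique decomposition $\up\gamma = \bigsqcup_{\gamma'\in\C,\,\gamma'\geq\gamma}\{C_1=\gamma'\}$, take the $\pr_\alpha$-measure of both sides to obtain $f_\alpha(\gamma) = \sum_{\gamma'\geq\gamma}\pr_\alpha(C_1=\gamma')$, and invert on the finite poset of cliques via the M\"obius formula of~\S\ref{sec:mobius-transform}.

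For items~(\ref{item:8})--(\ref{item:10}), the plan is to compute the joint law of the first $n$ cliques along an arbitrary Cartier--Foata chain $\gamma_1\to\cdots\to\gamma_n$ in~$\Cstar$ and then recognise it as that of a Markov chain. Applying~(\ref{eq:16}) to the height-$n$ trace $x=\gamma_1\cdots\gamma_n$, taking the $\pr_{\alpha_0}$-measure of both sides, and invoking the converse direction of the second M\"obius inversion formula (\S\ref{sec:second-mobi-invers}) identify $\pr_{\alpha_0}(C_1=\gamma_1,\ldots,C_n=\gamma_n)=\pr_{\alpha_0}(Y_n=x)$ with the graded M\"obius transform of $f_{\alpha_0}$ evaluated at~$x$. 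Setting $\alpha_i := \alpha_0\cdot\gamma_1\cdots\gamma_i$, I would factor $f_{\alpha_0}(\gamma_1\cdots\gamma_{n-1}\cdot c) = f_{\alpha_0}(\gamma_1\cdots\gamma_{n-1})\,f_{\alpha_{n-1}}(c)$ inside the sum defining this graded transform via the chain rule~(\ref{eq:6}), so that the remaining alternating sum over $c\geq\gamma_n$ collapses to $h_{\alpha_{n-1}}(\gamma_n)$; iterating the chain rule on the prefix yields
\[
\pr_{\alpha_0}(C_1=\gamma_1,\ldots,C_n=\gamma_n) \;=\; \Big(\prod_{i=1}^{n-1} f_{\alpha_{i-1}}(\gamma_i)\Big)\, h_{\alpha_{n-1}}(\gamma_n).
\]

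The final step is the key identity
\[
h_\alpha(c) \;=\; f_\alpha(c)\,g_{\alpha\cdot c}(c)\qquad (c\in\Cstar),
\]
whose probabilistic reading, via the cornerstone identity, is $\pr_\alpha(C_1=c) = \pr_\alpha(\up c)\cdot\pr_{\alpha\cdot c}(c\to C_1)$. I would derive it from~(\ref{eq:4}): the chain rule gives $\pr_\alpha(\up(c\cdot y)\mid \up c) = \pr_{\alpha\cdot c}(\up y)$ for all $y\in\M$, so on~$\up c$ the ``shift past~$c$'' transports $\pr_\alpha$ onto $\pr_{\alpha\cdot c}$, and the event $\{C_1=c\}$ is precisely $\up c$ intersected with the event that the first clique of the shifted trace is Cartier--Foata compatible after~$c$, whose conditional probability is $\sum_{c'\colon c\to c'}h_{\alpha\cdot c}(c')=g_{\alpha\cdot c}(c)$. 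Substituting $f_{\alpha_{i-1}}(\gamma_i) = h_{\alpha_{i-1}}(\gamma_i)/g_{\alpha_i}(\gamma_i)$ into the joint law above makes it telescope into $h_{\alpha_0}(\gamma_1)\prod_{i=1}^{n-1}h_{\alpha_i}(\gamma_{i+1})/g_{\alpha_i}(\gamma_i)$, which is precisely the law of a Markov chain on~$\XS\times\Cstar$ with initial measure $\delta_{\{\alpha_0\}}\otimes h_{\alpha_0}\rest{\Cstar}$ and transition matrix~(\ref{eq:12}), settling items~(\ref{item:8})--(\ref{item:10}) together. I close by noting that~(\ref{eq:12}) is stochastic whenever $g_{\alpha'}(\gamma)>0$ (immediate from~(\ref{eq:13})), and that the degenerate case $g_{\alpha'}(\gamma)=0$ is harmless, because the key identity then forces $h_\alpha(\gamma)=0$ for every $\alpha$ with $\alpha\cdot\gamma=\alpha'$, so the state $(\alpha,\gamma)$ is reached with probability zero. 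I expect the main obstacle to be the clean derivation of the key identity via this shift-and-chain-rule argument; once it is in hand, the rest is formal manipulation of~(\ref{eq:16}), (\ref{eq:6}), and the second M\"obius inversion formula.
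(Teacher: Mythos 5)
Your proposal is correct, and its overall skeleton coincides with the paper's: identify the joint law of the cliques with the graded M\"obius transform via the decomposition~(\ref{eq:16}) and the second inversion formula (this is the paper's Lemma~\ref{lem:2}), establish the key identity $h_\beta(\gamma)=f_\beta(\gamma)\,g_{\beta\cdot\gamma}(\gamma)$ (the paper's Lemma~\ref{lem:1}), and telescope. The one genuinely different ingredient is your proof of the key identity. The paper derives it by a purely combinatorial computation on the valuation: expanding $g_\alpha(\gamma)$, collapsing the inner alternating sum by a binomial argument to get $g_\alpha(\gamma)=\sum_{\gamma''\parallel\gamma}(-1)^{|\gamma''|}f_\alpha(\gamma'')$ from the hypothesis $h_\alpha(1)=0$, and then multiplying by $f_\beta(\gamma)$ and applying the chain rule. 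You instead read the identity probabilistically, as $\pr_\alpha(C_1=c)=\pr_\alpha(\up c)\cdot\pr_{\alpha\cdot c}(c\to C_1)$, using the fact that the chain rule~(\ref{eq:4}) transports $\pr_\alpha(\cdot\mid\up c)$ onto $\pr_{\alpha\cdot c}$ under the shift, together with the Cartier--Foata fact that $C_1(c\cdot\xi')=c$ iff $c\to C_1(\xi')$. This is more conceptual and arguably illuminates why $g$ appears as a normalising factor; the two small points you should make explicit are that the transport of measure, known a priori only on elementary cylinders, extends to the events $\{C_1=c'\}$ by inclusion--exclusion (they are finite Boolean combinations of cylinders), and the uniqueness argument behind the Cartier--Foata equivalence. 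The trade-off is that the paper's Lemma~\ref{lem:1} is stated for an arbitrary fibred valuation with $h_\alpha(1)=0$, with no measure in sight, and this generality is precisely what is reused in the converse Theorem~\ref{thr:2}, where the measure is being constructed and cannot yet be conditioned on; your probabilistic derivation covers Theorem~\ref{thr:1} but would not transfer there. Your handling of the degenerate case $g_{\alpha'}(\gamma)=0$ is sound provided you argue the vanishing of the joint probability by monotonicity (bounding the $n$-step event by the $j$-step event whose probability is $h_{\alpha_{j-1}}(\gamma_j)=0$), which is exactly what the paper does.
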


\subsubsection{Remark on the transition matrix}
\label{sec:remark-trans-matr}

The matrix $P$ described above might have some lines entirely filled
with zeros. This happens for the lines indexed by pairs
$(\alpha,\gamma)$ where $\gamma$ is such that
$g_{\alpha\cdot\gamma}(\gamma)=0$\,.

Although formally forbidden for stochastic matrices, this is actually
of little inconvenience since these lines correspond to states that
will never be reached.

\subsubsection{Strategy of proof}
\label{sec:outline-proof}

We first establish the two following key lemmas. All the statements of
Theorem~\ref{thr:1} derive from them.

The proof follows closely the same line as the proof of the
corresponding results for Bernoulli measures
(\S~\ref{sec:caract-bern-meas} and~\S\ref{sec:markov-chain-cliques}),
with the specific issue here consisting in correctly dealing with the
random current state.

\begin{lemma}
  \label{lem:2}
  Let\/ $(\pr_\alpha)_{\alpha\in\XS}$ be a Markov measure associated
to a trace monoid action~$(\M,\XS)$, and with fibred
  valuation~$(f_\alpha)_{\alpha\in\XS}$\,. For each $\alpha\in\XS$,
  let $h_\alpha:\M\to\bbR$ be the graded M\"obius transform
  {\normalfont(\S~\ref{sec:grad-mobi-transf})} of~$f_\alpha(\cdot)$.
  Let $n\geq1$ be an integer, and let $x\in\M$ be a trace of height
  $\height(x)=n$. Then:
    \begin{gather}
      \label{eq:11}
      \pr_\alpha(Y_n=x)=h_\alpha(x)\,.
    \end{gather}
\end{lemma}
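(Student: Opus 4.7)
The plan is to invoke the converse direction of the second Möbius inversion formula (\S~\ref{sec:second-mobi-invers}). I will introduce an auxiliary function $k_\alpha : \M \to \bbR$ defined by $k_\alpha(1) = 0$ and $k_\alpha(x) = \pr_\alpha(Y_{\height(x)} = x)$ for every non-empty trace~$x$. The goal is to verify that, for every $x \in \M$, the function $k_\alpha$ satisfies the summation identity $f_\alpha(x) = \sum_{y \in \M(x)} k_\alpha(y)$. Once this is done, the converse direction of the second Möbius inversion formula forces $k_\alpha$ to coincide with the graded Möbius transform of $f_\alpha$, that is $k_\alpha = h_\alpha$, which is precisely the content of~\eqref{eq:11} on traces of height $n \geq 1$.

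The verification of the summation identity splits into two cases. For a non-empty trace $x$ of height $n \geq 1$, I will appeal to the decomposition~\eqref{eq:16} of \S~\ref{sec:compl-elem-cylind}, which expresses $\up x$ as the disjoint union of the events $\{Y_n = z\}$ over $z \in \M(x)$. Taking $\pr_\alpha$-measure of both sides and using $\pr_\alpha(\up x) = f_\alpha(x)$ directly delivers the identity. For the basal case $x = 1$, one has $\M(1) = \C$ and $f_\alpha(1) = \pr_\alpha(\BM) = 1$, and it remains to compute $\sum_{z \in \C} k_\alpha(z) = k_\alpha(1) + \sum_{\gamma \in \Cstar} \pr_\alpha(C_1 = \gamma)$. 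The second term equals~$1$ since, as recalled in \S~\ref{sec:infinite-traces}, every $\xi \in \BM$ has a well-defined non-empty first Cartier-Foata clique, so that $C_1$ takes values in~$\Cstar$ $\pr_\alpha$-almost surely. Together with the convention $k_\alpha(1) = 0$, this balances the identity at~$x = 1$.

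Once the summation identity is verified on all of~$\M$, the converse of the second Möbius inversion formula yields $k_\alpha = h_\alpha$, and in particular the sought formula $\pr_\alpha(Y_n = x) = h_\alpha(x)$ for every trace of height $n \geq 1$. The main subtlety in the argument is the handling of the base case $x = 1$, where the cylinder decomposition~\eqref{eq:16} does not directly apply; it is resolved by the ad hoc choice $k_\alpha(1) = 0$, which is consistent precisely because $C_1$ is almost surely valued in~$\Cstar$. As a by-product, this will also deliver $h_\alpha(1) = 0$, matching the Möbius condition~\eqref{eq:21} used in the proof of Theorem~\ref{thr:1}. Note that the Markov chain rule~\eqref{eq:4} is not invoked in this lemma: the result rests entirely on the measure-theoretic decomposition of cylinders and on the inversion formula applied fibrewise, one state $\alpha$ at a time.
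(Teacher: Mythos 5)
Your proof is correct and follows essentially the same route as the paper's: decompose the cylinder $\up x$ via~\eqref{eq:16}, identify $\pr_\alpha(\up x)$ with $f_\alpha(x)$, and invoke the converse of the second M\"obius inversion formula to conclude that $x\mapsto\pr_\alpha(Y_{\height(x)}=x)$ is the graded M\"obius transform of~$f_\alpha$. Your explicit treatment of the base case $x=1$ (setting $k_\alpha(1)=0$ and checking the identity over $\M(1)=\C$ using that $C_1$ is $\pr_\alpha$-almost surely valued in~$\Cstar$) is a detail the paper's proof passes over in silence, yet it is genuinely needed to apply the converse as stated, since $\pr_\alpha(Y_0=1)=1$ would otherwise spoil the identity at $x=1$; this is a welcome refinement rather than a different argument.
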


\begin{proof}
  Let $x$ and $n$ be as in the statement. The formula~(\ref{eq:16})
  decomposes the elementary cylinder $\up x$ as a disjoint union, whence:
  \begin{align*}
    \pr_\alpha(\up x)=\sum_{z\in\M(x)}\pr_\alpha(Y_n=z)\,.
  \end{align*}
  We also have $\pr_\alpha(\up x)=f_\alpha(x)$ by definition
  of~$f_\alpha$\,. Hence, by the reciprocal of the second M\"obius
  inversion formula (\S~\ref{sec:second-mobi-invers}), we deduce that
  the two functions $f_\alpha$ and
  $x\in\M\mapsto\pr_\alpha(Y_{\height(x)}=x)$ are related by the
  graded M\"obius formula, whence~(\ref{eq:11}).
\end{proof}

\begin{lemma}
  \label{lem:1}
  Let $(\M,\XS)$ be a trace monoid action, and let $F=(f_\alpha)_{\alpha\in\XS}$ be a fibred valuation. For each
  $\alpha\in\XS$, let $h_\alpha:\C\to\bbR$ be the M\"obius transform
  of~$f_\alpha$\,, and let $g_\alpha:\Cstar\to\bbR$ be defined as
  in\/~\eqref{eq:13}. Assume that $h_\alpha(1)=0$ for all
  $\alpha\in\XS$ (in particular, this holds if $F$ is M\"obius).

  Then, for all $\beta\in\XS$ and for all $\gamma\in\Cstar$, holds:
\begin{align}
  \label{eq:14}
f_{\beta}(\gamma)g_\alpha(\gamma)&=h_{\beta}(\gamma)\,,\qquad
\text{for }\alpha=\beta\cdot\gamma\,.
\end{align}
\end{lemma}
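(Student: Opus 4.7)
The plan is to compute $g_\alpha(\gamma)$ explicitly in terms of $f_\alpha$, and then to exploit the chain rule together with the hypothesis $h_\alpha(1)=0$ to recognise $f_\beta(\gamma)g_\alpha(\gamma)$ as $h_\beta(\gamma)$. First I would substitute the definition of $h_\alpha$ into~\eqref{eq:13} and swap the order of summation, writing
\begin{gather*}
  g_\alpha(\gamma)=\sum_{\gamma''\in\C}A(\gamma'')f_\alpha(\gamma''),\qquad
  A(\gamma'')=\sum_{\substack{\gamma'\in\Cstar\tq\gamma\to\gamma'\\\gamma'\leq\gamma''}}(-1)^{|\gamma''|-|\gamma'|}\,.
\end{gather*}

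Next I would identify the subsets of $\gamma''$ contributing to $A(\gamma'')$. Unfolding the Cartier-Foata relation (\S~\ref{sec:cart-foata-relat}) and using the irreflexivity of~$I$, a letter $b\in\gamma''$ can belong to some $\gamma'$ with $\gamma\to\gamma'$ if and only if $b$ is \emph{not} parallel to~$\gamma$ in the clique sense (i.e.\ either $b\in\gamma$ or $b$ fails to commute with some letter of $\gamma$). Write $\gamma''_p=\{b\in\gamma''\tq b\parallel\gamma\}$ and $\gamma''_n=\gamma''\setminus\gamma''_p$, with sizes $m$ and~$k$. The condition reduces to $\emptyset\neq\gamma'\subseteq\gamma''_n$, and the binomial identity $\sum_{j=1}^k\binom{k}{j}(-1)^{k-j}=-(-1)^k$ (for $k\geq1$) yields
\begin{gather*}
  A(\gamma'')=\begin{cases}-(-1)^{|\gamma''|},&\text{if }\gamma''\not\parallel\gamma,\\ 0,&\text{if }\gamma''\parallel\gamma.\end{cases}
\end{gather*}
Hence $g_\alpha(\gamma)=-\sum_{\gamma''\in\C\tq\gamma''\not\parallel\gamma}(-1)^{|\gamma''|}f_\alpha(\gamma'')$.

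To conclude, I would invoke the hypothesis $h_\alpha(1)=0$, which reads $\sum_{\gamma''\in\C}(-1)^{|\gamma''|}f_\alpha(\gamma'')=0$. Splitting this sum according to whether $\gamma''\parallel\gamma$ flips the sign, giving $g_\alpha(\gamma)=\sum_{\eta\in\C\tq\eta\parallel\gamma}(-1)^{|\eta|}f_\alpha(\eta)$. Multiplying by $f_\beta(\gamma)$ and applying the chain rule~\eqref{eq:6} with $\alpha=\beta\cdot\gamma$ gives $f_\beta(\gamma)f_\alpha(\eta)=f_\beta(\gamma\cdot\eta)$. The map $\eta\mapsto\delta=\gamma\cdot\eta$ is a bijection between cliques $\eta\parallel\gamma$ and cliques $\delta\geq\gamma$, under which $|\eta|=|\delta|-|\gamma|$, so
\begin{gather*}
  f_\beta(\gamma)g_\alpha(\gamma)=\sum_{\delta\in\C\tq\delta\geq\gamma}(-1)^{|\delta|-|\gamma|}f_\beta(\delta)=h_\beta(\gamma),
\end{gather*}
as required. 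The main obstacle is the combinatorial evaluation of $A(\gamma'')$: translating the Cartier-Foata relation into the clean subset condition $\gamma'\subseteq\gamma''_n$ and carrying out the inclusion-exclusion. The use of $h_\alpha(1)=0$ is the second key ingredient, converting a sum over cliques \emph{not} parallel to $\gamma$ into the desired sum over cliques extending~$\gamma$.
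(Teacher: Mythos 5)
Your proposal is correct and follows essentially the same route as the paper's proof: substitute the definition of the M\"obius transform into~\eqref{eq:13}, swap the sums, evaluate the inner alternating sum by inclusion--exclusion to get $g_\alpha(\gamma)=-\sum_{\gamma''\not\parallel\gamma}(-1)^{|\gamma''|}f_\alpha(\gamma'')$, convert it via $h_\alpha(1)=0$ into a sum over cliques parallel to~$\gamma$, and finish with the chain rule and the reindexing $\eta\mapsto\gamma\cdot\eta$. The only cosmetic difference is that you evaluate the inner sum with explicit binomial coefficients where the paper adds and subtracts the empty clique and invokes $(1-1)^{|\delta|}$; both computations are correct.
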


\begin{proof}
  Let $\beta\in\XS$ and $\gamma\in\Cstar$, and put
  $\alpha=\beta\cdot\gamma$. We compute as follows:
\begin{align*}
  g_\alpha(\gamma)&=\sum_{\gamma'\in\Cstar\tq\gamma\to\gamma'}h_\alpha(\gamma')\\
&=\sum_{\gamma'\in\Cstar\tq\gamma\to\gamma'}\Bigl(\sum_{\gamma''\in\C\tq\gamma''\geq\gamma'}(-1)^{|\gamma''|-|\gamma'|}f_\alpha(\gamma'')\Bigr)\\
&=\sum_{\gamma''\in\C}(-1)^{|\gamma''|}f_\alpha(\gamma'')B(\gamma,\gamma'')\\
\intertext{with}
B(\gamma,\gamma'')&=
\sum_{\gamma'\in\Cstar}(-1)^{|\gamma'|}\un_{\{\gamma\to\gamma'\}}\un_{\{\gamma'\leq\gamma''\}}\,.
\end{align*}

For $\gamma,\gamma''\in\C$, a clique $\gamma'\in\Cstar$ satisfies
$\gamma\to\gamma'$ and $\gamma'\leq\gamma''$ if and only if
$\gamma''\leq\delta$, where $\delta=\{a\in\gamma''\tq\gamma\to
a\}$. Therefore, adding and subtracting the empty clique in the sum
defining $B(\gamma,\gamma'')$ and using the binomial formula, we find:
\begin{align*}
  B(\gamma,\gamma')&=\sum_{\gamma'\in\C\tq\gamma'\leq\delta}(-1)^{|\gamma'|}-1
=\un_{\{\delta=1\}}-1
=-\un_{\{\neg(\gamma''\parallel\gamma)\}}\,.
\end{align*}
We obtain thus the following expression for~$g_\alpha(\gamma)$:
\begin{gather}
  \label{eq:18}
g_\alpha(\gamma)=-\sum_{\gamma''\in\C\tq\neg(\gamma''\parallel\gamma)}(-1)^{|\gamma''|}f_\alpha(\gamma'')\,.
\end{gather}
  
Writing down the hypothesis $h_\alpha(1)=0$ yields, by the very
definition of the M\"obius transform~$h_\alpha$:
\begin{gather*}
  \sum_{\gamma''\in\C}(-1)^{|\gamma''|}f_\alpha(\gamma'')=0\,.
\end{gather*}
Separating this sum according to those cliques $\gamma''$ which are on
the one hand, and which are not on the other hand, parallel
to~$\gamma$, and by virtue of~(\ref{eq:18}), we obtain:
\begin{gather}
\label{eq:19}
  g_\alpha(\gamma)=\sum_{\gamma''\in\C\tq\gamma''\parallel\gamma}(-1)^{|\gamma''|}f_\alpha(\gamma'')\,.
\end{gather}

Since $\alpha=\beta\cdot\gamma$ on the one hand, and by the chain rule
for fibred valuations on the other hand, we have:
\begin{gather*}
f_\beta(\gamma\cdot\gamma'')=f_\beta(\gamma)f_\alpha(\gamma'')\,.
\end{gather*}

Therefore, multiplying both sides of~(\ref{eq:19}) by
$f_\beta(\gamma)$ yields:
\begin{align*}
  f_\beta(\gamma)g_\alpha(\gamma)&=\sum_{\gamma''\parallel\gamma}(-1)^{|\gamma''|}f_\beta(\gamma\cdot\gamma'')\\
&=\sum_{\delta\in\C\tq\delta\geq\gamma}(-1)^{|\delta|-|\gamma|}f_\beta(\delta)\\
&=h_\beta(\gamma)\,,
\end{align*}
which was to be proved.
\end{proof}

\subsubsection{Proof of Theorem~{\normalfont\ref{thr:1}}}
\label{sec:proof-theorem}

Let us show that the fibred valuation $F=(f_\alpha)_{\alpha\in\XS}$
associated with the Markov measure $\pr$ is M\"obius. According
to Lemma~\ref{lem:2}, and since $C_1=Y_1$\,, we have:
\begin{gather*}
  \forall\gamma\in\Cstar\quad\pr_\alpha(C_1=\gamma)=h_\alpha(\gamma)\,.
\end{gather*}

It follows at once that $h_\alpha(\gamma)\geq0$ for all
$\gamma\in\Cstar$, which proves the property~(\ref{eq:22}). And
applying the total probability law to the first clique~$C_1$ yields:
\begin{align*}
  \sum_{\gamma\in\Cstar}h_\alpha(\gamma)=1\,.
\end{align*}
But we also have, by the M\"obius inversion formula~(\ref{eq:17}):
\begin{align*}
  f_\alpha(1)&=h_\alpha(1)+\sum_{\gamma\in\Cstar}h_\alpha(\gamma)\\
1&=h_\alpha(1)+1\,,
\end{align*}
whence $h_\alpha(1)=0$, which proves the property~(\ref{eq:21}).
Henceforth we have proved that $F$ is M\"obius and that the law of
$(X_0,C_1)$ under $\pr_\alpha$ is indeed given by
$\delta_{\{\alpha\}}\otimes(h_\alpha\rest\Cstar)$\,. This proves
points~\ref{item:7} and~\ref{item:9} of the theorem.

Let $P$ be the square matrix indiced by $(\XS\times\Cstar)^2$ and
defined by~(\ref{eq:12}), with the restriction that
$P_{(\alpha,\gamma),(\alpha',\gamma')}=0$ whenever
$g_\alpha(\gamma)=0$. We shall prove that, for any sequence
$(x_0,c_1),\ldots,(x_{n-1},c_n)$ in $\XS\times\Cstar$ with $n\geq1$,
if $p_n$ denotes the probability
\begin{gather*}
p_n=  \pr_\alpha\bigl(
(X_0,C_1)=(x_0,c_1),\ldots,(X_{n-1},C_n)=(x_{n-1},c_n)\bigr)
\end{gather*}
and if $q_n$ denotes the result of the chain rule:
\begin{gather*}
  q_n=\un_{\{\alpha=x_0\}}h_{x_0}(c_1)P_{(x_0,c_1),(x_1,c_2)}\dots P_{(x_{n-2},c_{n-1}),(x_{n-1},c_n)}
\end{gather*}
then $p_n=q_n$\,. This will prove the statements~\ref{item:8}
and~\ref{item:10} and complete the proof Theorem~\ref{thr:1}. 

First, it is clear that $p_n=q_n=0$ if any of the following statements
does not hold:
\begin{gather}
\label{eq:20}
  x_0=\alpha\,,\\
\label{eq:23}
\forall i\in\{0,\ldots,n-2\}\quad x_i\cdot c_{i+1}=x_{i+1}\,,\\
\label{eq:24}
\forall i\in\{1,\ldots,n-1\}\quad c_i\to c_{i+1}\,.
\end{gather}

Hence, assuming that all the above statements
(\ref{eq:20})--(\ref{eq:24}) hold, we have:
\begin{align*}
  p_n&=\pr_\alpha(Y_n=c_1\cdot\ldots\cdot c_n)\\
&=h_\alpha(c_1\cdot\ldots\cdot c_n)&&\text{by Lemma~\ref{lem:2}}
\end{align*}

Defining $y_j=c_1\cdot\ldots\cdot c_j$ for all
$j\in\{0,\ldots,n\}$\,, the definition of the graded M\"obius
transform (\S~\ref{sec:grad-mobi-transf}) together with the chain rule
relation for fibred valuations yield:
\begin{gather}
  \label{eq:25}
p_n=f_\alpha(y_{n-1})h_{x_{n-1}}(c_n)
\end{gather}

The above implies in particular the following inequality, valid for
any integer $j\leq n$:
\begin{align*}
  p_n&\leq\pr_\alpha(Y_j=c_1\cdot\ldots\cdot c_j)\\
&\leq f_\alpha(y_{j-1})h_{x_{j-1}}(c_j)
\end{align*}

Therefore,  if $g_{x_{j-1}}(c_{j-1})=0$ for some integer
$j\in\{2,\ldots,n\}$, then on the one hand, 
$h_{x_{j-1}}(\gamma)=0$ for all $\gamma\in\Cstar$ such that
$c_{j-1}\to\gamma$ holds, and in particular $h_{x_{j-1}}(c_j)=0$, and thus
$p_n=0$. But $q_n=0$ also on the other hand, and thus $p_n=q_n$\,. 

It remains thus to examine the most interesting case, where all
statements (\ref{eq:20})--(\ref{eq:24}) hold and
$g_{x_{j-1}}(c_j)\neq0$ for all $j\in\{2,\ldots,n\}$. In this case, we
apply the result of Lemma~\ref{lem:1} and we divide by
$g_{x_{j-1}}(c_j)$ to obtain, since $x_j=x_{j-1}\cdot c_j$:
\begin{gather}
\label{eq:26}
\forall j\in\{1,\ldots,n-1\}\quad f_{x_{j-1}}(c_j)=\frac{h_{x_{j-1}}(c_j)}{g_{x_j}(c_j)}
\end{gather}

We evaluate~$q_n$:
\begin{align*}
  q_n&=h_{x_0}(c_1)\frac{h_{x_1}(c_2)}{g_{x_1}(c_1)}\dots\frac{h_{x_{n-1}}(c_n)}{g_{x_{n-1}}(c_{n-1})}\\
&=\frac{h_{x_0}(c_1)}{g_{x_1}(c_1)}\frac{h_{x_1}(c_2)}{g_{x_2}(c_2)}\dots 
\frac{h_{x_{n-2}}(c_{n-1})}{g_{x_{n-1}}(c_{n-1})}h_{x_{n-1}}(c_n)\\
&=f_{x_0}(c_1)f_{x_1}(c_2)\dots f_{x_{n-2}}(c_{n-1})
h_{x_{n-1}}(c_n)&&\text{by~(\ref{eq:26})}\\
&=f_\alpha(c_1\cdot\ldots\cdot c_{n-1})h_{x_{n-1}}(c_n)&&\text{by the
  chain rule}\\
&=p_n&&\text{by~(\ref{eq:25})}
\end{align*}
This completes the proof of Theorem~\ref{thr:1}.\qed

\subsection{Realisation of Markov measures}
\label{sec:markov-chain-states}

\subsubsection{A reciprocal to Theorem~{\normalfont\ref{thr:1}}}
\label{sec:reciprocal-theorem}

A striking result of Theorem~\ref{thr:1} is that the fibred valuation
associated with a Markov measure is necessarily M\"obius. The next
result shows that the converse is true. Furthermore, it provides a
realisation of the associated Markov measure by means of the Markov
chain of states-and-cliques.

The effectiveness of the M\"obius conditions for fibred valuations,
and thus for the parameters of Markov measures, is discussed below
in~\S~\ref{sec:effect-mobi-cond}.

\begin{theorem}
  \label{thr:2}
Let $(\M,\XS)$ be a trace monoid action, and let $F=(f_\alpha)_{\alpha\in\XS}$ be a M\"obius fibred valuation. Then there exists a unique Markov measure\/
$\pr=(\pr_\alpha)_{\alpha\in\XS}$ such that
$f_\alpha(x)=\pr_\alpha(\up x)$ for all $\alpha\in\XS$ and all
$x\in\M$. 

Let $P$ be the transition matrix on $\XS\times\Cstar$ defined
in\/~\eqref{eq:12}. For each $\alpha\in\XS$, let
$(X_{i-1},C_i)_{i\geq1}$ be a Markov chain with transition matrix~$P$
and with initial measure
$\delta_{\{\alpha\}}\otimes(h_\alpha\rest\Cstar)$, defined on the
canonical probability space $(\Omega,\GGG,\prq_\alpha)$. Then
$\pr_\alpha$ is the law of the random variable $\xi:\Omega\to\BM$
defined by:
\begin{gather*}
  \xi=\bigvee_{n\geq1}(C_1\cdot\ldots\cdot C_n)\,,
\end{gather*}
under the probability\/~$\prq_\alpha$\,. 
\end{theorem}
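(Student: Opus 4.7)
My plan is to dispose of uniqueness first, then to check that the proposed Markov chain is genuinely stochastic, and finally to show $\pr_\alpha(\up x)=f_\alpha(x)$ for every $x\in\M$, from which the Markov chain rule~\eqref{eq:4} will drop out for free.

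For uniqueness, the collection $\{\up x\tq x\in\M\}$ is a $\pi$-system generating~$\FFF$ (\S\ref{sec:slgb}), so any two probability measures on $(\BM,\FFF)$ that coincide on this family coincide everywhere; hence a Markov measure realising $F$ is uniquely pinned down. To see that the Markov chain in the statement is well-defined, I use that $F$ is M\"obius: $h_\alpha(1)=0$ and $h_\alpha\geq0$ on~$\Cstar$, combined with $\sum_{\gamma\in\C}h_\alpha(\gamma)=f_\alpha(1)=1$ (which is~\eqref{eq:17} at $x=1$), gives that $h_\alpha\rest\Cstar$ is a probability on~$\Cstar$. Each non-zero row of $P$ sums to~$1$ by the very definition~\eqref{eq:13} of~$g$; the zero rows correspond to pairs $(\alpha,\gamma)$ that are never actually visited, since reaching $(\alpha,\gamma)$ requires $h_\beta(\gamma)>0$ for some $\beta$ with $\beta\cdot\gamma=\alpha\cdot\gamma$, and by Lemma~\ref{lem:1} this forces $g_{\alpha\cdot\gamma}(\gamma)>0$.

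The central step is to compute the law of $\xi$ under~$\prq_\alpha$. I fix $x\in\M\setminus\{1\}$, set $n=\height(x)$, and use the disjoint decomposition~\eqref{eq:16}, $\up x=\bigsqcup_{z\in\M(x)}\{Y_n=z\}$. For $z\in\M(x)$ with Cartier--Foata form $d_1\to\cdots\to d_n$, the event $\{Y_n=z\}$ is the same as $\{C_1=d_1,\ldots,C_n=d_n\}$, whose $\prq_\alpha$-probability is the product of $h_\alpha(d_1)$ with the successive transition entries of~$P$. Repeated use of Lemma~\ref{lem:1}---the same telescoping computation of $q_n$ already carried out in the proof of Theorem~\ref{thr:1}---collapses this product to $f_\alpha(d_1\cdots d_{n-1})\,h_{\alpha\cdot d_1\cdots d_{n-1}}(d_n)$, which by~\eqref{eq:6} and the definition of the graded M\"obius transform (\S\ref{sec:grad-mobi-transf}) is precisely $h_\alpha(z)$. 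Summing over $z\in\M(x)$ and invoking~\eqref{eq:17} yields $\pr_\alpha(\up x)=f_\alpha(x)$. The case $x=1$ is trivial since $\up 1=\BM$.

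Once $\pr_\alpha(\up x)=f_\alpha(x)$ has been proved for every~$x$, the chain rule~\eqref{eq:4} for $\pr$ follows at once from the chain rule~\eqref{eq:6} for~$F$, so $\pr$ is a Markov measure realising $F$. The main obstacle is really the telescoping manipulation in the third paragraph; since it is identical to the computation appearing in the proof of Theorem~\ref{thr:1}, I would simply quote it rather than redo it. Beyond that, the only subtlety is the degenerate case $g=0$, handled by the non-reachability argument sketched above.
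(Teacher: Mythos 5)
Your proposal is correct and follows essentially the same route as the paper: the same decomposition~\eqref{eq:16} of $\up x$ into the events $\{Y_n=z\}$, the same telescoping of the transition probabilities via Lemma~\ref{lem:1} to identify $\prq_\alpha(Y_n=z)$ with $h_\alpha(z)$, and the same appeal to the second M\"obius inversion formula to recover $f_\alpha(x)$. Your explicit treatment of uniqueness via the $\pi$-system and of the zero rows of $P$ only makes explicit what the paper leaves to \S~\ref{sec:elementary-remarks} and \S~\ref{sec:remark-trans-matr}.
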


\begin{proof}
  We first need to check that the Markov chain in the statement is
  well defined, that is to say, that
  $\delta_{\{\alpha\}}\otimes(h_\alpha\rest\Cstar)$ is a probability
  distribution on $\XS\times\Cstar$, and that $P$ is a stochastic
  matrix. The second point is obvious by construction. For the first
  point, indeed $h_\alpha\rest\Cstar$ is a probability distribution on
  $\Cstar$ since, on the one hand, $h_\alpha\geq0$ by definition of
  $F$ being M\"obius, and on the other hand, by the second M\"obius
  inversion formula (\S~\ref{sec:second-mobi-invers}):
  \begin{align*}
    \sum_{\gamma\in\Cstar}h_\alpha(\gamma)&=\sum_{\gamma\in\C}h_\alpha(\gamma)&&\text{since
      $h_\alpha(1)=0$}\\
&=f_\alpha(1)=1.
  \end{align*}

  For each $\alpha\in\XS$, let $\pr_\alpha$ be the probability measure
  on $\BM$ defined as in the statement as the law of
  $\bigvee_{n\geq1}(C_1\cdot\ldots\cdot C_n)$\,,
  under~$\prq_\alpha$\,. Let $x\in\M$ be a trace. The decomposition
  from~\S~\ref{sec:compl-elem-cylind} of the elementary cylinder~$\up
  x$ yields:
\begin{align}
\label{eq:43}
  \pr_\alpha(\up x)=\sum_{z\in\M(x)}\prq_\alpha(C_1\cdot\ldots\cdot C_{\height(x)}=z)\,.
\end{align}

Let $n=\height(x)$, and $z\in\M$ be a trace of height
$\height(z)=n$. Then $z$ has a Cartier-Foata decomposition of the form
$c_1\to\ldots\to c_n$\,. We set $y_j=c_1\cdot\ldots\cdot c_j$ and
$x_j=\alpha\cdot y_j$ for all integers $j\in\{0,\ldots,n\}$, and:
\begin{align*}
  q_n&=\prq_\alpha(C_1=c_1,\ldots,C_n=c_n)\,.
\end{align*}

Then we
have:
\begin{align*}
q_n  &=\prq_\alpha(X_0=\alpha,C_1=c_1,\ldots,X_{n-1}=\alpha_{n-1},C_n=c_n)\\
&=h_\alpha(c_1)\frac{h_{x_1}(c_2)}{g_{x_1}(c_1)}\ldots\frac{h_{x_{n-1}}(c_n)}{g_{x_{n-1}}(c_{n-1})}\\
&=\frac{h_{x_0}(c_1)}{g_{x_1}(c_1)}\dots\frac{h_{x_{n-2}}(c_{n-1})}{g_{x_{n-1}}(c_{n-1})}h_{x_{n-1}}(c_n)
\qquad\text{since $\alpha=x_0$}\\
&=f_{x_0}(c_1)\ldots f_{x_{n-2}}(c_{n-1})h_{x_{n-1}}(c_n)\,,
\end{align*}
the last equality by using Lemma~\ref{lem:1}, which applies since $F$
is M\"obius by assumption. 

By the chain rule for the fibred valuation~$F$ on the one hand, and
according to the definition of the graded M\"obius transform
(\S~\ref{sec:grad-mobi-transf}) on the other hand, we obtain thus:
$q_n=h_\alpha(z)$.

Returning to~(\ref{eq:43}), applying the second M\"obius inversion
formula (\S~\ref{sec:second-mobi-invers}) yields:
\begin{align*}
  \pr_\alpha(\up x)&=\sum_{z\in\M(x)}h_\alpha(z)=f_\alpha(x)\,.
\end{align*}
This completes the proof of the theorem.
\end{proof}

\subsubsection{Effectiveness of M\"obius conditions}
\label{sec:effect-mobi-cond}

Since Theorems~\ref{thr:1} and~\ref{thr:2} entirely characterise
Markov measures by means of M\"obius fibred valuations, it is natural
to examine to which extent this condition is effective. 

A natural question is the following: given a family of real numbers
$\lambda=(\lambda_\alpha(a))_{(\alpha,a)\in\XS\times\Sigma}$\,, can we
effectively determine the existence of a Markov measure
$\pr=(\pr_\alpha)_{\alpha\in\XS}$ such that $\pr_\alpha(\up
a)=\lambda_\alpha(a)$ for all $(\alpha,a)\in\XS\times\Sigma$? It
amounts to knowing whether:
\begin{enumerate}\tightlist
\item\label{item:11} The family $\lambda$ defines a fibred valuation.
\item\label{item:12} In this case, the fibred valuation has to be
  M\"obius. 
\end{enumerate}

The first point is solved by Proposition~\ref{prop:6}: the family
$\lambda$ must satisfy the concurrency equations. These are a finite
number of equalities to be satisfied.

For the second point, we observe that the M\"obius conditions stated
in Definition~\ref{def:6} consist in a series of equalities and
another series of inequalities. Assuming that the first point is
fulfilled, the equalities and the inequalities only involve polynomial
expressions of the terms $\lambda_\alpha(a)$, for $(\alpha,a)$ ranging
over~$\XS\times\Sigma$.

Finally, the question is entirely answered by means of polynomial
conditions involving only the terms of the family~$\lambda$. Giving a
parametric form for all M\"obius fibred valuations, and thus all
Markov measures, is yet another question which is not treated here.

Next subsection is devoted to the study of an example illustrating the
use of the machinery described above.

\subsection{Examples of Markov measures on a Rabati tiling}
\label{sec:example-mark-meas}

\subsubsection{Setting}
\label{sec:setting}

We consider the $4$-Rabati strip in line (\S~\ref{sec:rabati-tiling}),
of which we depict in Figure~\ref{fig:rabataifour} the five possible
domino tilings. Recall that the tilings correspond to the set of five
cliques $\C=\{1,a,b,c,a\cdot c\}$ of the trace monoid
\begin{gather*}
  \M=\langle a,b,c\;|\;ac=ca\rangle\,.
\end{gather*}

\begin{figure}[h]
  \centering
  \begin{tabular}{ccc}
    \begin{picture}(80,40)
\multiput(0,0)(20,0){5}{\line(0,1){40}}
\multiput(0,0)(0,40){2}{\line(1,0){80}}
\put(20,20){\circle*{4}$a$}
\put(40,20){\circle*{4}$b$}
\put(60,20){\circle*{4}$c$}
    \end{picture}&
    \begin{picture}(80,40)
\put(0,0){\line(0,1){40}}
\put(0,20){\line(1,0){40}}
\multiput(40,0)(20,0){3}{\line(0,1){40}}
\multiput(0,0)(0,40){2}{\line(1,0){80}}
\put(20,20){\circle*{4}$a$}
\put(60,20){\circle*{4}$c$}
    \end{picture}&
    \begin{picture}(80,40)
\multiput(0,0)(20,0){2}{\line(0,1){40}}
\multiput(60,0)(20,0){2}{\line(0,1){40}}
\put(20,20){\line(1,0){40}}
\multiput(0,0)(0,40){2}{\line(1,0){80}}
\put(40,20){\circle*{4}$b$}
    \end{picture}\\
    \begin{picture}(80,40)
\put(80,0){\line(0,1){40}}
\put(40,20){\line(1,0){40}}
\multiput(0,0)(20,0){3}{\line(0,1){40}}
\multiput(0,0)(0,40){2}{\line(1,0){80}}
\put(20,20){\circle*{4}$a$}
\put(60,20){\circle*{4}$c$}
    \end{picture}&
    \begin{picture}(80,40)
\multiput(0,0)(40,0){3}{\line(0,1){40}}
\multiput(0,0)(0,20){3}{\line(1,0){80}}
\put(20,20){\circle*{4}$a$}
\put(60,20){\circle*{4}$c$}
    \end{picture}
  \end{tabular}
  \caption{\small\textsl{The five tilings of the $4$-Rabati strip
      corresponding, from left to right, to the cliques $1$, $a$, $b$
      on the top line and to the cliques $c$ and $a\cdot c$ on the
      bottom line.}}
\label{fig:rabataifour}
\end{figure}
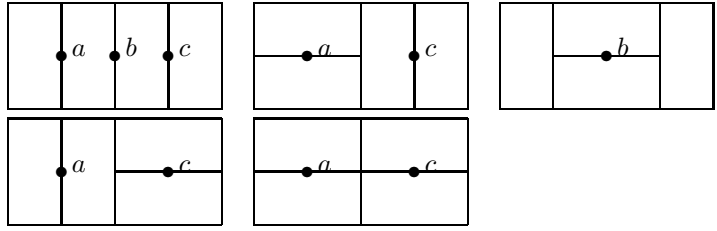

The partial action generated by the enabled flips, which corresponds
to the tip-top action of the trace monoid~$\M$
(\S~\ref{sec:tip-top-action}), is conveniently represented by a graph,
depicted on Figure~\ref{fig:pooqjpqpq}--$(a)$. Since each elementary action is
reversible, the graph is undirected.

\begin{figure}[h]
\centering
$\begin{array}{ccc}
  \xymatrix@C=3.5em{&1\ar@{-}[dl]_{a}\ar@{-}[d]_{b}\ar@{-}[dr]^{c}\\
a\ar@{-}[dr]_{c}&b&c\ar@{-}[dl]^a
\\
&a\cdot c
}
&&
  \xymatrix@C=3.5em{&1\ar@/_/[dl]_{p}\ar@/^/[d]^{q}\ar@/^/[dr]^{p}\\
a\ar@/_/_{p}[dr]\ar_1[ur]&b\ar@/^/^{1}[u]&c\ar@/^/^{p}[dl]\ar^1[ul]
\\
&a\cdot c\ar@/_/_{1}[ul]\ar@/^/^{1}[ur]
}\\
\\[-.5em]
(a)&\strut\hspace{3em}\strut&(b)
\end{array}$
\caption{\small\textsl{$(a)$---Graph of the action of flips on the
    $4$-Rabati tilings. Vertices are labelled by states (cliques) and
    edges are labelled by elementary actions (letters). $(b)$---Graph
    of the probabilistic actions of flips on the same Rabati tiling,
    with probabilistic parameters $q$ and $p=1-\sqrt q$.}}
  \label{fig:pooqjpqpq}
\end{figure}
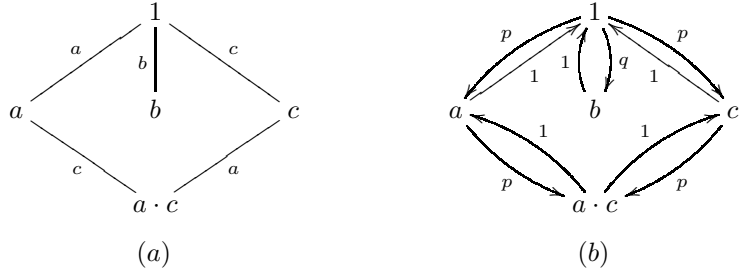

\subsubsection{Markov measures for the partial action}
\label{sec:mark-meas-part}

In order to fit with the previous setting, we have to consider the
 action $(\C\cup\{\bot\})\times\M\to(\C\times\{\bot\})$, as
described in Proposition~\ref{prop:2}, \S~\ref{sec:partial-actions}. Introducing the
probabilistic dynamics however, we impose the additional constraint
that $\pr_\gamma(\up\bot)=0$ for all $\gamma\in\C$, so that $\bot$ is
never reached. In the following, it amounts to merely omitting~$\bot$.

\subsubsection{Markov measures with symmetries}
\label{sec:markov-measures-with}

Let us tackle the task of determining all M\"obius fibred valuations
$F=(f_\gamma)_{\gamma\in\C}$\,, if any, with the following symmetry
properties:
\begin{align}
\label{eq:32}
  f_1(a)&=f_1(c)\,,&f_{a\cdot c}(a)&=f_{a\cdot c}(c)\,,
\end{align}
and such that $f_\gamma(x)>0$ whenever $x$ is a letter enabled
at~$\gamma$.

\subsubsection{Parameters}
\label{sec:parameters}

In a direct approach intending to illustrate the above notions, let us
introduce the following parameters that entirely encode our problem:
\begin{align*}
  p&=f_1(a)=f_1(c)&q&=f_1(b)\\
r&=f_a(c)&r'&=f_a(a)\\
t&=f_c(a)&t'&=f_c(c)\\
u&=f_{a\cdot c}(a)=f_{a\cdot c}(c)&q'&=f_b(b)
\end{align*}

We write down the concurrency equations (Proposition~\ref{prop:6}):
\begin{align*}
&\text{At $1$\,:}&f_1(a)f_a(c)&=f_1(c)f_c(a)&\text{i.e.\ }pr&=pt\\
&\text{At $a$\,:}&f_a(a)f_1(c)&=f_a(c)f_{a\cdot c}(a)&\text{i.e.\
}r'p&=ru\\
&  \text{At $b$\,:}&&\text{none}\\
&\text{At $c$\,:}&f_c(c)f_1(a)&=f_c(a)f_{a\cdot c}(c)&\text{i.e.\ }
t'p&=tu\\
&\text{At $a\cdot c$\,:}&f_{a\cdot c}(a)f_c(c)&=f_{a\cdot
  c}(c)f_a(a)&\text{i.e.\ } ut'&=ur'
\end{align*}

Since $u,p>0$ we deduce at once: $r=t$ and $r'=t'$ and we are thus
left with parameters $p,q,q',r,r',u$ with the only equation:
\begin{align}
\label{eq:31}
  r'p&=ru\,.
\end{align}

We write down the M\"obius equations $h_\gamma(1)=0$ for
$\gamma$ ranging over~$\C$: 
\begin{align}
\label{eq:28}
&\text{At $1$\,:}&1-2p-q+pr&=0\\
\label{eq:29}
&\text{At $a$\,:}&1-r'-r+r'p&=0\\
\label{eq:27}
&  \text{At $b$\,:}&1-q'&=0\\
\notag&\text{At $c$\,:}&\text{same as~(\ref{eq:29})}\\
\label{eq:30}
&\text{At $a\cdot c$\,:}&1-2u+ur'&=0
\end{align}

\subsubsection{Solutions}
\label{sec:solutions}

Equation~(\ref{eq:30}) re-writes as: $u(1-r')=1-u$\,. Replacing $r'p$
by $ru$ in~(\ref{eq:29}), thanks to~(\ref{eq:31}), and then
multiplying by $u$ yields thus:
\begin{gather*}
  (1-u)(1-ru)=0\,.
\end{gather*}

Assume that $u\neq1$. Then $ru=1$, but since both $r$ and $u$ lie in
$(0,1]$ it implies $r=u=1$, and similarly thanks to~(\ref{eq:31}),
$r'=p=1$, but then $q=0$ by~(\ref{eq:28}), contradicting our
assumption  $f_\gamma(x)>0$ for all enabled letters~$x$.  

Therefore $u=1$, and thus $r'=1$ by~(\ref{eq:30}), and $p=r$
by~(\ref{eq:31}).  We now only have two parameters $p$ and $q$ related
by $1-2p-q+p^2=0$, yielding $p=1-\sqrt q$. Here the additional
inequalities from M\"obius conditions are trivially satisfied for
$q\in(0,1)$.  

We obtain thus a continuum of Markov measures satisfying the symmetry
conditions~(\ref{eq:32}). All the parameters of the graded valuation
corresponding to the Markov measure are deduced from~$q$, using the
above relations. They are graphically gathered in
Figure~\ref{fig:pooqjpqpq}--$(b)$. So for instance, starting from the
tiling correspond to the empty clique, the probability of obtaining
the trace $a^2\cdot c^2\cdot b^2\cdot a$ is:
\begin{align}
\notag
  \pr_1(\up a^2\cdot c^2\cdot b^2\cdot
  a)&=f_1(a)f_a(a)f_1(c)f_c(c)f_1(b)f_b(b)f_1(a)\\
\label{eq:48}
&=pr'pr'qq'p=p^3q=(1-\sqrt q)^3q\,.
\end{align}

It could have been computed alternatively, yielding the same result:
\begin{align}
\notag
  \pr_1(\up a^2\cdot c^2\cdot b^2\cdot
  a)&=\pr(\up a\cdot c\cdot c\cdot a\cdot b^2\cdot a)\\
\notag
&=f_1(a)f_a(c)f_{ac}(c)f_a(a)f_1(b)f_b(b)f_1(a)\\
\label{eq:49}
&=prur'qq'p=p^3q=(1-\sqrt q)^3q\,.
\end{align}

In order to construct Markov measures for this example, we have used a
direct approach, writing down the concurrency equations and the
M\"obius equations, and solving them by hand. We shall see below in
next section a more generic construction of Markov measures.





\section{Uniform measures}
\label{sec:uniform-measures}

In~\S~\ref{sec:example-mark-meas}, we have seen examples of Markov
measures $(\pr_\alpha)_{\alpha\in\XS}$ for a trace monoid action
$\XS\times\M\to\XS$, with the following additional property:
\begin{gather}
\label{eq:33}
  \forall\alpha\in\XS\quad\forall x\in\M\quad\pr_\alpha(\up x)>0\iff x\in\M_\alpha\,,
\end{gather}
where $\M_\alpha=\{x\in\M\tq\alpha\cdot x\neq\bot\}$ is the trace
language associated with the partial action
(\S~\ref{sec:trace-lang-assoc}).

Property~(\ref{eq:33}) is natural when dealing with a general concurrent system:
the probabilistic dynamics should only concern enabled actions. The existence of Markov measures satisfying this additional constraint was not guaranteed by the existence Theorem~\ref{thr:2}. It is the aim of this section to prove this
result for all irreducible concurrent systems. The measure that we obtain generalises to concurrent systems the uniform measure for irreducible sub-shifts of finite type, due to Parry~\cite{parry64,lind95,kitchens97}.

\subsection{Growth series and characteristic root}
\label{sec:unif-meas-assoc}

We put aside for a moment the probabilistic dynamics, and focus on the
combinatorics of concurrent systems. In
subsection~\ref{sec:constr-unif-meas}, we will see how both aspects,
probability and combinatorics, combine together and lead us to a
notion of uniform measure.

Throughout this section, we consider a concurrent system $(\M,\XS,\bot)$, with $\M=\M(\Sigma,I)$.

\subsubsection{Growth series}
\label{sec:growth-series-1}

For each $\alpha\in\XS\setminus\{\bot\}$, let $Z_\alpha(t)$ be the power series defined by:
\begin{gather*}
  Z_\alpha(t)=\sum_{x\in\M_\alpha}t^{|x|}\,,
\end{gather*}
where $\M_\alpha$ denotes the trace language associated with the
action. Let $t_\alpha$ be the radius of convergence
of~$Z_\alpha$\,. Since $Z_\alpha$ has only non negative terms,
$t_\alpha$~is a singularity of~$Z_\alpha$\,.

For each integer $k\geq0$, let $\M_\alpha(k)$ denote the subset of
traces $\M_\alpha(k)=\{x\in\M_\alpha\tq|x|=k\}$\,. Since at least one
letter is enabled at each state, it is easy to see that
$\#\M_\alpha(k)\geq1$. And since $Z_\alpha(t)$ also writes as
$Z_\alpha(t)=\sum_{k\geq0}\#\M_\alpha(k)t^k$\,, we have $t_\alpha\leq1$.

\subsubsection{Characteristic root of an irreducible action}
\label{sec:char-root-an}

We show below that the real~$t_\alpha$\,, which characterises the
growth of the monoid acting on~$\XS$ and starting from~$\alpha$, does
not depend on $\alpha$ if the action is irreducible.

\begin{lemma}
  \label{lem:3}
  If $\alpha$ leads to~$\beta$ ($\alpha\leads\beta$ with the notation
  of~{\normalfont\S~\ref{sec:essential-states}}), then $t_\alpha\leq t_\beta$\,.
\end{lemma}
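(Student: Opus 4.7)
The plan is to show that any trace $x$ witnessing $\alpha\leads\beta$ produces an injection $\M_\beta\hookrightarrow\M_\alpha$ by left concatenation $y\mapsto x\cdot y$, and then to deduce a pointwise comparison between the growth series $Z_\alpha$ and $Z_\beta$ on the positive real axis, from which the inequality of radii of convergence is immediate.

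First I would pick, using the hypothesis $\alpha\leads\beta$, a trace $x\in\M_\alpha\setminus\{1\}$ with $\alpha\cdot x=\beta$. For every $y\in\M_\beta$ the action property of Definition~\ref{def:2} gives $\alpha\cdot(x\cdot y)=(\alpha\cdot x)\cdot y=\beta\cdot y\neq\bot$, so $x\cdot y\in\M_\alpha$. Hence the map $\Phi:y\in\M_\beta\mapsto x\cdot y\in\M_\alpha$ is well defined. It is injective because trace monoids are left-cancellative: if $x\cdot y=x\cdot y'$ then, since $|x\cdot y|=|x|+|y|$ and $|x\cdot y'|=|x|+|y'|$, the two factorisations have the same length, and the additivity of length together with the antisymmetry of the divisibility ordering (\S~\ref{sec:divis-relat}) forces $y=y'$.

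Next, denoting $\M_\alpha(k)=\{z\in\M_\alpha\tq|z|=k\}$ as in~\S~\ref{sec:growth-series-1} and $n=|x|\geq1$, the injection $\Phi$ gives $\#\M_\beta(k)\leq\#\M_\alpha(k+n)$ for every $k\geq0$. Multiplying by $t^k$ with $t>0$ and summing over $k$ yields
\begin{gather*}
Z_\beta(t)\;=\;\sum_{k\geq0}\#\M_\beta(k)\,t^{k}\;\leq\;\sum_{k\geq0}\#\M_\alpha(k+n)\,t^{k}\;\leq\;t^{-n}Z_\alpha(t)\,,
\end{gather*}
so that $t^{n}Z_\beta(t)\leq Z_\alpha(t)$ for all $t>0$ in the common domain of definition.

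Finally, for any $0<t<t_\alpha$ the right-hand side is finite, hence $Z_\beta(t)$ is finite, which implies $t\leq t_\beta$. Letting $t\uparrow t_\alpha$ gives $t_\alpha\leq t_\beta$, as desired. No serious obstacle is foreseen: the only delicate point is the left-cancellativity used for the injectivity of $\Phi$, which is a standard property of trace monoids and is implicit in the additivity of the length together with the existence of the Cartier-Foata normal form recalled in~\S~\ref{sec:cartier-foata-normal}.
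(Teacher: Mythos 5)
Your proof is correct and follows essentially the same route as the paper's: both use a trace $x$ with $\alpha\cdot x=\beta$ to map $\M_\beta$ into $\M_\alpha$ via $y\mapsto x\cdot y$ and deduce $t^{|x|}Z_\beta(t)\leq Z_\alpha(t)$, hence the comparison of radii of convergence. One quibble: antisymmetry of $\leq$ plus additivity of length does not by itself yield left-cancellation, so your justification of the injectivity of $y\mapsto x\cdot y$ is not quite a proof --- but left-cancellativity of trace monoids is indeed a standard fact, and the paper's own argument relies on it just as implicitly when it bounds $Z_\alpha(t)$ below by $\sum_{y\in\M_\beta}t^{|x\cdot y|}$.
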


\begin{proof}
  Recall that we are only dealing with power series with non negative
  coefficients. Assuming that $\alpha\leads\beta$ holds, let
  $x\in\M_\alpha$ be such that $\beta=\alpha\cdot x$. Then we have
  $x\cdot y\in\M_\alpha$ for every $y\in\M_\beta$\,, and therefore:
  \begin{align*}
    Z_\alpha(t)&\geq\sum_{y\in\M_\beta}t^{|x\cdot y|} \geq
    t^{|x|}Z_\beta(t)\,.
  \end{align*}
It follows that $Z_\alpha(t)<\infty\implies Z_\beta(t)<\infty$\,,
whence the inequality $t_\beta\geq t_\alpha$\,.
\end{proof}

\begin{corollary}
  \label{cor:2}
  If the concurrent system $(\M,\XS,\bot)$ is irreducible, then $t_\alpha$ is independent
  of~$\alpha$.
\end{corollary}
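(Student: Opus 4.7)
The plan is to derive this immediately from Lemma~\ref{lem:3} once we observe that in an irreducible action, every pair of (non-$\bot$) states communicates. The key input is Proposition~\ref{prop:5} together with the remark following the definition of irreducible sub-action, namely that every state of an irreducible sub-action is essential in the original action.

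First I would argue that, under the irreducibility hypothesis, $\alpha \leads \beta$ holds for every pair $\alpha,\beta \in \XS \setminus \{\bot\}$. Indeed, since the whole action is an irreducible sub-action of itself, every state is essential. For each essential state $\beta$, Proposition~\ref{prop:5} exhibits $X_\beta = \{\gamma \in \XS \tq \beta \leads \gamma\}$ as an irreducible sub-action. By irreducibility of the ambient action, $X_\beta$ must coincide with $\XS \setminus \{\bot\}$, so $\beta \leads \alpha$. Exchanging the roles of $\alpha$ and $\beta$ gives $\alpha \leads \beta$ as well, hence $\alpha \lleads \beta$.

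Applying Lemma~\ref{lem:3} to both directions of the communication relation then yields $t_\alpha \leq t_\beta$ and $t_\beta \leq t_\alpha$, whence $t_\alpha = t_\beta$. There is no real obstacle here: the content of the corollary is entirely carried by Lemma~\ref{lem:3} and by the structural description of irreducible sub-actions provided in Proposition~\ref{prop:5}, so the proof reduces to assembling these two ingredients.
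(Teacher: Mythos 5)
Your proposal is correct and follows the same route as the paper: the paper's proof simply asserts that any two states of an irreducible action communicate and then applies Lemma~\ref{lem:3} in both directions. You merely spell out the communication claim via Proposition~\ref{prop:5} and the essentiality of all states, which is a faithful (and slightly more detailed) version of the same argument.
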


\begin{proof}
  Any two states $\alpha$ and $\beta$ of an irreducible partial action
  communicate: $\alpha\leads\beta$ and $\beta\leads\alpha$. Hence
  $t_\alpha=t_\beta$ by Lemma~\ref{lem:3}.
\end{proof}

The corollary justifies the following definition.

\begin{definition}
  \label{def:11}
  For an irreducible concurrent system $(\M,\XS,\bot)$, the common value~$t_\alpha$\,,
  for $\alpha$ ranging over~$\XS\setminus\{\bot\}$\,, is called the \emph{characteristic root}
  of the concurrent system. We denote it by~$t_0$\,.
\end{definition}

We shall now study an inversion formula, which extends to concurrent systems the first M\"obius inversion formula valid for trace monoids (\S~\ref{sec:first-mobi-invers}). It will justify the term of
``root'' for~$t_0$\,.





\subsection{An inversion formula for actions of trace monoids}
\label{sec:an-inversion-formula}

\subsubsection{Formal fibred series over non commutative variables}
\label{sec:formal-fibred-series}

We extend the notion of formal series over partially commutative
variables \cite{schuetzenberger61,cartier69} in order to take into
account not only the combinatorics of a trace monoid, but also the
combinatorics of a trace monoid acting on a finite set.

\begin{definition}
  \label{def:7}
  Given a concurrent system $(\M,\XS,\bot)$, we consider for each
  pair $(\alpha,\beta)$ of states, the trace language
  $\M_{\alpha,\beta}$ of those traces leading from $\alpha$
  to~$\beta$, hence defined by:
  \begin{gather}
    \label{eq:34}
\M_{\alpha,\beta}=\{x\in\M\tq x\in\M_\alpha\wedge \alpha\cdot x=\beta\}\,.
  \end{gather}

  A \emph{fibred formal series} is a square matrix
  $F=(F_{\alpha,\beta})_{(\alpha,\beta)\in\XS\times\XS}$\,, where each
  $F_{\alpha,\beta}$ is a mapping $F_{\alpha,\beta}:\M\to\bbZ$\,, such
  that holds:
  \begin{gather*}
    \forall x\in\M\quad F_{\alpha,\beta}(x)\neq0\implies x\in\M_{\alpha,\beta}\,.
  \end{gather*}
  We see $F_{\alpha,\beta}$ as a formal sum indexed
    by~$\M_{\alpha,\beta}$\,. We denote by $\bbZ\cro\M\XS$ the set of
    fibred formal series.

An element $F\in\bbZ\cro\M\XS$ is a \emph{formal fibred polynomial} whenever
$F_{\alpha,\beta}$ has a finite support, for any
    $(\alpha,\beta)\in\XS\times\XS$\,. 
\end{definition}

\subsubsection{$\bbZ$-algebra structure for $\bbZ\cro\M\XS$}
\label{sec:bbz-algebra-struct}

If $F_{\alpha,\beta}$ and $F'_{\alpha,\beta}$ are two formal sums
indexed by~$\M_{\alpha,\beta}$\,, we define their sum
$F_{\alpha,\beta}+F'_{\alpha,\beta}$ term by term, leading to another
formal sum indexed by~$\M_{\alpha,\beta}$\,. The scalar multiplication
$\lambda F_{\alpha,\beta}$ is defined by the scalar multiplication of
each term.

If $F_{\alpha,\beta}$ and $G_{\beta,\gamma}$ are two formal sums
indexed by $\M_{\alpha,\beta}$ and $\M_{\beta,\gamma}$ respectively,
we can define their Cauchy product $F_{\alpha,\beta}G_{\beta,\gamma}$
as the a formal sum indexed by~$\M_{\alpha,\gamma}$\,, each term being
defined by the following finite sum:
\begin{gather}
\label{eq:35}
\forall x\in\M_{\alpha,\gamma}\quad
(F_{\alpha,\beta}G_{\beta,\gamma})(x)=
\sum_{\substack{(y,z)\in\M_{\alpha,\beta}\times\M_{\beta,\gamma}\;:\\
    y\cdot z=x}} F_{\alpha,\beta}(y)G_{\beta,\gamma}(z)
\end{gather}

Next, we define the product of two fibred formal series $F$ and $G$ by
the matrix product:
\begin{gather*}
  (FG)_{\alpha,\gamma}=\sum_{\beta\in\XS}F_{\alpha,\beta}G_{\beta,\gamma}\,,
\end{gather*}
where each product in the sum is taken as in~(\ref{eq:35}).

Then clearly, $\bbZ\cro\M\XS$ has the structure of a
$\bbZ$-algebra, of which fibred formal polynomials are a
sub-algebra. The identity element is the identity matrix
$I=(I_{\alpha,\beta})_{(\alpha,\beta)\in\XS\times\XS}$\,, which terms
are defined by:
\begin{gather*}
  \forall(\alpha,\beta)\in\XS\times\XS\quad\forall
  x\in\M_{\alpha,\beta}\quad
  I_{\alpha,\beta}(x)=\un_{\{\alpha=\beta\}}\un_{\{x=1\}}\,.
\end{gather*}

\subsubsection{Zeta  series and inversion formula}
\label{sec:zeta-mobius-fibred}

We shall be interested by two elements of $\bbZ\cro\M\XS$ in particular.

\begin{definition}
  \label{def:8}
The zeta fibred formal series associated to the concurrent system $(\M,\XS,\bot)$ is the element $\zeta\in\bbZ\cro\M\XS$
defined by:
\begin{gather*}
  \forall(\alpha,\beta)\in\XS\times\XS\quad\forall
  x\in\M_{\alpha,\beta}\quad \zeta_{\alpha,\beta}(x)=1\,.
\end{gather*}

For each pair $(\alpha,\beta)\in\XS\times\XS$\,, let
$\C_{\alpha,\beta}$ denote the set of cliques leading from $\alpha$
to~$\beta$, if any: $\C_{\alpha,\beta}=\C\cap\M_{\alpha,\beta}$\,. The
M\"obius fibred polynomial of the concurrent system $(\M,\XS,\bot)$ is the element $\mu\in\bbZ\cro\M\XS$ defined by:
\begin{gather*}
  \forall(\alpha,\beta)\in\XS\times\XS\quad\forall
  x\in\M_\alpha\quad\mu_{\alpha,\beta}(x)=(-1)^{|x|}\un_{\{x\in\C_{\alpha,\beta}\}}\,.
\end{gather*}
\end{definition}

Then we have the following inversion formula.

\begin{theorem}
  \label{thr:3}
  For any concurrent system $(\M,\XS,\bot)$, the M\"obius fibred polynomial is the formal inverse of the zeta
  fibred formal series:
\begin{gather*}
  \mu\zeta=\zeta\mu=I.
\end{gather*}
\end{theorem}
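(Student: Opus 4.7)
The plan is to compute the entries of the matrix products $\mu\zeta$ and $\zeta\mu$ directly, and show each reduces to the identity by a binomial cancellation based on the Cartier-Foata structure.

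Fix $\alpha,\gamma\in\XS$ and a trace $x$. For $(\mu\zeta)_{\alpha,\gamma}(x)$ to be non-zero we need $x\in\M_{\alpha,\gamma}$, so first I would reduce to this case. By definition,
\begin{gather*}
(\mu\zeta)_{\alpha,\gamma}(x)=\sum_{\beta\in\XS}\sum_{\substack{(y,z)\in\M_{\alpha,\beta}\times\M_{\beta,\gamma}\\y\cdot z=x}}(-1)^{|y|}\un_{\{y\in\C_{\alpha,\beta}\}}\,.
\end{gather*}
The key observation is that once a decomposition $x=y\cdot z$ is fixed, the intermediate state is forced: $\beta=\alpha\cdot y$, and then automatically $\beta\cdot z=\gamma$. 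Moreover, $y\leq x\in\M_\alpha$ implies $y\in\M_\alpha$ by the downward-closure of $\M_\alpha$ (Proposition~\ref{prop:3}), so the condition $y\in\C_{\alpha,\beta}$ is equivalent to ``$y$ is a clique and $y\leq x$''. Hence the sum collapses to
\begin{gather*}
(\mu\zeta)_{\alpha,\gamma}(x)=\sum_{\substack{y\in\C\\y\leq x}}(-1)^{|y|}\,.
\end{gather*}

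Next I would use the Cartier-Foata normal form (\S\ref{sec:cartier-foata-normal}) to identify clique left-divisors of $x$. If $x=1$ the only such $y$ is $1$ and the sum equals $1$; combined with the constraint $x\in\M_{\alpha,\gamma}$ this forces $\alpha=\gamma$, matching $I_{\alpha,\gamma}(1)$. If $x\neq 1$ with first Cartier-Foata clique~$c_1\neq 1$, then a clique $y$ divides $x$ on the left if and only if $y\subseteq c_1$ (viewed as subsets of $\Sigma$), because any letter initial in $x$ must lie in $c_1$, and conversely $y\subseteq c_1\leq x$. The binomial identity then gives
\begin{gather*}
\sum_{y\subseteq c_1}(-1)^{|y|}=(1-1)^{|c_1|}=0\,,
\end{gather*}
which matches $I_{\alpha,\gamma}(x)=0$. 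This proves $\mu\zeta=I$.

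For $\zeta\mu=I$ the argument is symmetric, but I need to work with right-divisibility, which is the step I expect to be the main technical point since only left-divisibility was developed in the preliminaries. I would spell out that the trace monoid is right-cancellative and that the set $\top(x)$ of maximal letters of $x$ (equivalently, the final Cartier-Foata clique when traces are normalised from the top) forms a clique: any two maximal letters must be independent, else one would dominate the other. Clique right-divisors of $x$ are then exactly subsets of $\top(x)$. After the same simplification (the intermediate state is forced, and the relevant enablement is automatic from $x\in\M_{\alpha,\gamma}$), one obtains
\begin{gather*}
(\zeta\mu)_{\alpha,\gamma}(x)=\sum_{z\subseteq\top(x)}(-1)^{|z|}\,,
\end{gather*}
which is $1$ when $x=1$ and $0$ otherwise by the same binomial cancellation, giving $\zeta\mu=I$.
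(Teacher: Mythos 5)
Your proof is correct and follows essentially the same route as the paper: collapse the sum over the intermediate state (forced to be $\alpha\cdot y$, with the membership conditions automatic by downward closure of $\M_\alpha$), reduce to the alternating sum over clique divisors of~$x$, and kill it by the binomial formula. You merely spell out in more detail the identification of clique left-divisors with subsets of the first Cartier-Foata clique and the symmetric right-divisor argument for $\zeta\mu=I$, which the paper dispatches with ``proved similarly''.
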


\begin{proof}
By definition of the product in $\bbZ\cro\M\XS$, we have for
$(\alpha,\beta)\in\XS\times\XS$ and for $x\in\M_{\alpha,\beta}$:
\begin{align*}
  (\mu\zeta)_{\alpha,\beta}(x)&=\sum_{\gamma\in\XS}\Bigl(
\sum_{y\in\C_{\alpha,\gamma}\tq y\leq x}(-1)^{|y|}
\Bigr)\\
&=\sum_{y\in\C_\alpha\tq y\leq x}(-1)^{|y|}\,,
\end{align*}
where $\C_\alpha$ denotes $\C_\alpha=\C\cap\M_\alpha$\,. The binomial
formula yields thus:
\begin{align*}
  (\mu\zeta)_{\alpha,\beta}(x)&=
  \begin{cases}
    1,&\text{if $x=1$},\\
0,&\text{otherwise.}
  \end{cases}
\end{align*}
But $x=1$ implies $\alpha=\beta$, and so we have proved
$\mu\zeta=I$. The equality $\zeta\mu=I$ is proved similarly.
\end{proof}

\subsubsection{A morphism toward a single formal variable}
\label{sec:morph-toward-single}

In order to exploit the above inversion formula (Theorem~\ref{thr:3})
for counting purposes, we follow a classical pattern by projecting
fibred formal series onto series over a single formal variable.

\begin{definition}
  \label{def:9}
  Given a set $\XS$, the algebra of single fibred formal series is the algebra
  $\bbZ_{\XS}[[t]]$ of square matrices indiced by~$\XS$, where each entry is
  a formal series over the single formal variable~$t$. 

  The algebra $\bbZ_\XS[t]$ of single formal polynomials is the
  sub-algebra of\/ $\bbZ_\XS[[t]]$ such that each entry is a polynomial
  in~$t$.
\end{definition}

It must be noted that if $F$ and $G$ are two formal series over~$t$
given under the form:
\begin{align*}
  F&=\sum_{x\in\M}F_xt^{|x|}\,,&G&=\sum_{x\in\M}G_xt^{|x|}\,,
\end{align*}
then their product is given by the following Cauchy product formula
with respect to the multiplication in the monoid~$\M$:
\begin{gather}
\label{eq:44}
  FG=\sum_{x\in\M}\Bigl(\sum_{(y,z)\in\M\times\M\tq y\cdot z=x}F_yG_z\Bigr)t^{|x|}\,.
\end{gather}

\begin{proposition}
  \label{prop:7}
The mapping\/ $\Phi:\bbZ\cro\M\XS\to\bbZ_\XS[[t]]$ defined as follows,
for $F\in\bbZ\cro\M\XS$:
\begin{gather*}
  \forall(\alpha,\beta)\in\XS\times\XS\quad
(\Phi F)_{\alpha,\beta}=\sum_{x\in\M}F_{\alpha,\beta}(x)t^{|x|}
\end{gather*}
is a morphism of\/ $\bbZ$-algebra.
\end{proposition}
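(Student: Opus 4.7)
The plan is to check the three defining properties of a $\bbZ$-algebra morphism: $\bbZ$-linearity, preservation of the unit, and multiplicativity. Before that, I would verify that $\Phi$ lands in $\bbZ_\XS[[t]]$: for each $(\alpha,\beta) \in \XS \times \XS$ and each integer $n \geq 0$, the coefficient $[t^n](\Phi F)_{\alpha,\beta} = \sum_{x \tq |x|=n} F_{\alpha,\beta}(x)$ is a finite sum, because the alphabet $\Sigma$ is finite so only finitely many traces have length~$n$. Additivity and scalar multiplication are then immediate, since both operations on $\bbZ\cro\M\XS$ and on $\bbZ_\XS[[t]]$ are performed entrywise in $(\alpha,\beta)$, and the entry $(\Phi F)_{\alpha,\beta}$ depends linearly on $F_{\alpha,\beta}$. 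Preservation of the identity reduces to the direct computation $(\Phi I)_{\alpha,\beta} = \sum_{x \in \M} \un_{\{\alpha=\beta\}} \un_{\{x=1\}} t^{|x|} = \un_{\{\alpha=\beta\}}$, which is exactly the $(\alpha,\beta)$ entry of the identity matrix in $\bbZ_\XS[[t]]$.

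The substantive step is multiplicativity. Unwinding the definition of the product in $\bbZ\cro\M\XS$---a matrix product over $\XS$ composed with the convolution formula~(\ref{eq:35}) on each entry---gives
\[
  (\Phi(FG))_{\alpha,\gamma} = \sum_{x \in \M} \Bigl( \sum_{\beta \in \XS} \sum_{\substack{(y,z)\in\M\times\M \tq \\ y\cdot z = x}} F_{\alpha,\beta}(y) G_{\beta,\gamma}(z) \Bigr) t^{|x|}.
\]
On the other side, the product in $\bbZ_\XS[[t]]$ is matrix multiplication over $\XS$ combined with the standard Cauchy product of power series in~$t$; and formula~(\ref{eq:44}) is precisely the statement that this Cauchy product can be rewritten as a sum indexed by~$\M$ using concatenation of traces. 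Applying~(\ref{eq:44}) entrywise to $(\Phi F)_{\alpha,\beta}$ and $(\Phi G)_{\beta,\gamma}$, and then summing over $\beta \in \XS$, yields
\[
  ((\Phi F)\cdot(\Phi G))_{\alpha,\gamma} = \sum_{\beta \in \XS} \sum_{x \in \M} \Bigl( \sum_{\substack{(y,z)\in\M\times\M \tq \\ y\cdot z = x}} F_{\alpha,\beta}(y) G_{\beta,\gamma}(z) \Bigr) t^{|x|}.
\]
The two expressions coincide after swapping the finite sum over $\beta$ with the series-level sum over $x$, which finishes the proof.

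No real obstacle arises. The only bookkeeping point worth flagging is this reordering of sums, which is legitimate because the outer sum over $\XS$ is finite and the inner sum over trace decompositions $y \cdot z = x$ is finite for each fixed $x$; everything else is a mechanical unwinding of the definitions. The content of the proposition is essentially that formula~(\ref{eq:44})---which does the genuine work of translating monoid-convolution into Cauchy product---is compatible with the matrix structure indexed by~$\XS$, and this is built into the definitions.
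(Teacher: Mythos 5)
Your proof is correct and follows exactly the route the paper intends: the paper's own proof simply declares $\Phi I = I$ obvious and leaves the multiplicativity as "a simple verification left to the reader based on~(\ref{eq:44})", which is precisely the computation you carry out. You merely fill in the details (finiteness of coefficients, entrywise linearity, and the swap of the finite sum over $\XS$ with the sum over~$\M$), all of which are sound.
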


\begin{proof}
  The property $\Phi I=I$ is obvious. The property
  $\Phi(FG)=\Phi(F)\Phi(G)$ is the matter of a simple verification
  left to the reader based on~(\ref{eq:44}).
\end{proof}

\subsubsection{The theta polynomial}
\label{sec:single-fibred-series}

We now introduce the $Z$ series, which strong combinatorial meaning
justifies the introduction of the above material.

\begin{definition}
  \label{def:10}
  Let $(\M,\XS,\bot)$ be a concurrent system. 
The associated $Z$ series is the single fibred formal series defined by:
\begin{gather*}
  \forall(\alpha,\beta)\in\XS\times\XS\quad Z_{\alpha,\beta}(t)=\sum_{x\in\M_{\alpha,\beta}}t^{|x|}.
\end{gather*}

The \emph{M\"obius single polynomial} $\mu\in\bbZ_\XS[t]$ is defined by:
\begin{gather*}
  \forall(\alpha,\beta)\in\XS\times\XS\quad
  \mu_{\alpha,\beta}(t)=\sum_{\gamma\in\C_{\alpha,\beta}}(-1)^{|\gamma|}t^{|\gamma|}\,.
\end{gather*}

The \emph{theta polynomial} is the standard polynomial $\theta(t)\in\bbZ[t]$
defined as the following determinant:
\begin{gather*}
  \theta(t)=\det\mu(t)\,.
\end{gather*}
\end{definition}

For instance, if the partial action is that of a sub-shift of finite
type generated by an incidence matrix~$M$ over the set of states, then
the single M\"obius polynomial is $\mu(t)=I-tM$. See computations of
$\mu(t)$ and of $\theta(t)$ for actions with concurrency
in~\S~\ref{sec:using-theta-polyn}.

\begin{theorem}
  \label{thr:4}
  Given any concurrent system $(\M,\XS,\bot)$, the M\"obius single polynomial is the formal inverse in\/
  $\bbZ_\XS[[t]]$ of the $Z$ series:
\begin{gather*}
  \mu(t)Z(t)=Z(t)\mu(t)=I\,,
\end{gather*}
where $I$ is the identity matrix defined by $I_{\alpha,\beta}(t)=\un_{\{\alpha=\beta\}}$\,.
\end{theorem}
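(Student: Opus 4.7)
The plan is to derive Theorem~\ref{thr:4} as an immediate consequence of Theorem~\ref{thr:3}, via the $\bbZ$-algebra morphism $\Phi$ of Proposition~\ref{prop:7}. The strategy is to identify the single fibred series $Z(t)$ and the single M\"obius polynomial $\mu(t)$ as the respective images under $\Phi$ of the zeta fibred formal series $\zeta$ and of the M\"obius fibred polynomial (both living in $\bbZ\cro\M\XS$), and then to push the equality $\mu\zeta=\zeta\mu=I$ forward through~$\Phi$.

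First I would unfold the definitions to check that $\Phi(\zeta)=Z(t)$. For each pair $(\alpha,\beta)\in\XS\times\XS$, the definition of $\Phi$ in Proposition~\ref{prop:7} and the support condition of Definition~\ref{def:7} (namely $\zeta_{\alpha,\beta}(x)=1$ on $\M_{\alpha,\beta}$ and $0$ elsewhere) give
\begin{gather*}
 (\Phi\zeta)_{\alpha,\beta}
 =\sum_{x\in\M}\zeta_{\alpha,\beta}(x)\,t^{|x|}
 =\sum_{x\in\M_{\alpha,\beta}}t^{|x|}
 =Z_{\alpha,\beta}(t)\,.
\end{gather*}
Similarly, writing $\mu^{\mathrm{fib}}\in\bbZ\cro\M\XS$ for the M\"obius fibred polynomial to distinguish it momentarily from the single polynomial $\mu(t)$, the definition of $\mu^{\mathrm{fib}}_{\alpha,\beta}$ in Definition~\ref{def:8} yields
\begin{gather*}
(\Phi\mu^{\mathrm{fib}})_{\alpha,\beta}
=\sum_{x\in\M}(-1)^{|x|}\un_{\{x\in\C_{\alpha,\beta}\}}t^{|x|}
=\sum_{\gamma\in\C_{\alpha,\beta}}(-1)^{|\gamma|}t^{|\gamma|}
=\mu_{\alpha,\beta}(t)\,,
\end{gather*}
so $\Phi(\mu^{\mathrm{fib}})=\mu(t)$. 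Note also that $\Phi(I)=I$ by construction.

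Applying the morphism $\Phi$ to both sides of the equality $\mu^{\mathrm{fib}}\zeta=\zeta\mu^{\mathrm{fib}}=I$ provided by Theorem~\ref{thr:3}, and using that $\Phi$ respects the product by Proposition~\ref{prop:7}, we conclude $\mu(t)Z(t)=Z(t)\mu(t)=I$, which is the claim of Theorem~\ref{thr:4}. I do not expect any serious obstacle here: the only delicate point is the overloaded notation $\mu$ for the fibred polynomial and for the single polynomial, which is resolved by the identification above. All the real combinatorial content sits in Theorem~\ref{thr:3}, and the only algebraic content, the multiplicativity of $\Phi$, is already established in Proposition~\ref{prop:7}; Theorem~\ref{thr:4} is thus essentially a direct translation from $\bbZ\cro\M\XS$ to $\bbZ_\XS[[t]]$.
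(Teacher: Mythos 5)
Your proof is correct and follows exactly the paper's argument: the paper also derives Theorem~\ref{thr:4} by applying the morphism $\Phi$ of Proposition~\ref{prop:7} to the inversion formula of Theorem~\ref{thr:3}, using the identifications $\Phi(\zeta)=Z(t)$ and $\Phi(\mu)=\mu(t)$. Your unfolding of these identifications is a welcome bit of extra detail but introduces nothing new.
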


\begin{proof}
  This derives by applying the morphism $\Phi$ of
  Proposition~\ref{prop:7} to the inversion formula stated in
  Theorem~\ref{thr:3}, since $\Phi(\zeta)=Z(t)$ and
  $\Phi(\mu)=\mu(t)$\,.
\end{proof}

\begin{theorem}
  \label{thr:6}
  Let $(\M,\XS,\bot)$ be an irreducible concurrent system. Then the characteristic root $t_0$ of the concurrent system is the smallest positive root of the theta polynomial.
\end{theorem}

\begin{proof}
  Let $t\in(0,t_0)$\,. Then all terms in $Z(t)$ are non negative
  convergent series, therefore the equality $\mu(t)Z(t)=I$ holds in
  the space of real matrices $M_{n}(\bbR)$, with $n=|\XS|$. Therefore
  $\theta(t)\neq0$.

  Assume, seeking a contradiction, that $\theta(t_0)\neq0$. Then
  $\mu(t)$ is invertible for all $t$ in the closed interval~$[0,t_0]$.
  Therefore the real matrices $Z(t)$ are uniformly bounded
  on~$[0,t_0)$. 

  Observe now that, for $t<t_0$ and for $\alpha\in\XS$, the line
  indiced by $\alpha$ of the real matrix $Z(t)$ has its terms that sum
  up to the convergent growth series~$Z_\alpha(t)$. Hence we obtain
  that $Z_\alpha(t)$ is bounded on $[0,t_0)$, a contradiction which
  completes the proof of the theorem.
\end{proof}

\subsubsection{Reduction to the standard M\"obius inversion formula}
\label{sec:reduct-stand-mobi}

Theorem~\ref{thr:4} entails the first M\"obius inversion formula
(\S~\ref{sec:first-mobi-invers}) as a particular case. Consider indeed
the trivial action of a trace monoid $\M$ over a
singleton~$\{*\}$. Then $Z(t)$ identifies with the growth series
$H(t)=\sum_{x\in\M}t^{|x|}$\,, the M\"obius fibred polynomial is
simply the M\"obius polynomial
$\mu_\M(t)=\sum_{c\in\C}(-1)^{|c|}t^{|c|}$ and the formula in
Theorem~\ref{thr:3} is the inversion formula~(\ref{eq:2}).

\subsection{Construction of the uniform measure}
\label{sec:constr-unif-meas}

\subsubsection{Measures on $\Mbar$}
\label{sec:measures-mbar}

We recall that $\Mbar=\M\cup\BM$ denotes the set of generalised
traces build upon the trace monoid~$\M$. It is equipped with a metrisable topology, for which it is a
compact space (\S~\ref{sec:compactness}). 

Assume given an irreducible concurrent system $(\M,\XS,\bot)$. For $\alpha\in\XS$ and for $t\in(0,t_0)$, let $\nu_{\alpha,t}$ denote the probability measure on~$\Mbar$ defined by:
\begin{align*}
  \nu_{\alpha,t}&=\frac1{Z_\alpha(t)}\sum_{x\in\M_\alpha}t^{|x|}\delta_{\{x\}}\,,&
Z_\alpha(t)&=\sum_{x\in\M_\alpha}t^{|x|}\,.
\end{align*}

\begin{lemma}
  \label{lem:5}
  For each state $\alpha\in\XS$ and for each trace $x\in\M_\alpha$\,,
  the full elementary cylinder $\Up x$ defined by:
\begin{gather*}
  \Up x=\{\xi\in\Mbar\tq x\leq\xi\}
\end{gather*}
is given the $\nu_{\alpha,t}$-probability:
\begin{gather*}
  \nu_{\alpha,t}(\Up x)=t^{|x|}\frac{Z_{\alpha\cdot x}(t)}{Z_\alpha(t)}\,.
\end{gather*}
\end{lemma}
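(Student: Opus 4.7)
The proof is a direct computation, exploiting the purely atomic nature of the measure and a bijection between the relevant trace sets.

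First I would unpack the definition. Since $\nu_{\alpha,t}$ is a convex combination of Dirac masses concentrated on traces belonging to $\M_\alpha \subseteq \M \subseteq \Mbar$, I have
\begin{gather*}
  \nu_{\alpha,t}(\Up x) = \frac{1}{Z_\alpha(t)} \sum_{\substack{y \in \M_\alpha \\ y \in \Up x}} t^{|y|} = \frac{1}{Z_\alpha(t)} \sum_{\substack{y \in \M_\alpha \\ x \leq y}} t^{|y|},
\end{gather*}
using that $\Up x \cap \M = \{y \in \M \tq x \leq y\}$. Hence it suffices to establish the identity
\begin{gather*}
  \sum_{\substack{y \in \M_\alpha \\ x \leq y}} t^{|y|} = t^{|x|}\, Z_{\alpha \cdot x}(t).
\end{gather*}

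The key step is a bijection between $\{y \in \M_\alpha \tq x \leq y\}$ and $\M_{\alpha \cdot x}$. Given $y$ with $x \leq y$, by definition of left divisibility (\S\ref{sec:divis-relat}) there exists $z \in \M$ with $y = x \cdot z$; uniqueness of $z$ follows from the length-additivity argument that makes $(\M,\leq)$ a partial order, or equivalently from left cancellativity of the trace monoid. Conversely, every $z \in \M$ gives a trace $y = x \cdot z$ satisfying $x \leq y$. What remains to check is that under this bijection, membership in $\M_\alpha$ corresponds to membership in $\M_{\alpha \cdot x}$. This is where the action property intervenes: $\alpha \cdot y = \alpha \cdot (x \cdot z) = (\alpha \cdot x) \cdot z$, so since $\alpha \cdot x \neq \bot$ (because $x \in \M_\alpha$), we have $\alpha \cdot y \neq \bot$ if and only if $(\alpha \cdot x) \cdot z \neq \bot$, i.e.\ $z \in \M_{\alpha \cdot x}$.

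Combining the bijection with the length additivity $|y| = |x| + |z|$ (\S\ref{sec:length-traces}) gives
\begin{gather*}
  \sum_{\substack{y \in \M_\alpha \\ x \leq y}} t^{|y|} = \sum_{z \in \M_{\alpha \cdot x}} t^{|x| + |z|} = t^{|x|} \sum_{z \in \M_{\alpha \cdot x}} t^{|z|} = t^{|x|}\, Z_{\alpha \cdot x}(t),
\end{gather*}
and dividing by $Z_\alpha(t)$ yields the claim. There is no real obstacle here; the only point requiring care is to confirm that the condition $\alpha \cdot x \neq \bot$ legitimately allows one to reduce the $\M_\alpha$-constraint on $y$ to an $\M_{\alpha \cdot x}$-constraint on $z$, which is a direct consequence of the definition of the partial action and Proposition~\ref{prop:3}.
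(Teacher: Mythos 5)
Your proof is correct and follows essentially the same route as the paper's: restrict the atomic measure to $\Up x\cap\M$, then reparametrise the traces $y\in\M_\alpha$ with $x\leq y$ as $y=x\cdot z$ with $z\in\M_{\alpha\cdot x}$ and use length additivity. You merely spell out the bijection and the membership equivalence that the paper states without comment.
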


\begin{proof}
  Since $\nu_{\alpha,t}$ only charges~$\M$, $\nu_{\alpha,t}(\Up x)$ is
given by the following sum:
\begin{align*}
  \nu_{\alpha,t}(\Up x)&=\frac1{Z_\alpha(t)}\sum_{y\in\M_{\alpha}\tq
    x\leq y}t^{| y|}\,.
\end{align*}
The traces $y\in\M_\alpha$ such that $x\leq y$ are of the form
$y=x\cdot z$ with $z\in\M_{\alpha\cdot x}$\,, hence:
\begin{align*}
  \nu_{\alpha,t}(\Up x)&=\frac1{Z_\alpha(t)}\sum_{z\in\M_{\alpha\cdot
      x}}t^{|x\cdot z|}=t^{|x|}\frac{Z_{\alpha\cdot x}(t)}{Z_\alpha(t)}\,.
\end{align*}
The proof is complete.
\end{proof}

\begin{lemma}
  \label{lem:6}
  For an irreducible concurrent system $(\M,\XS,\bot)$ of characteristic root~$t_0$\,, the probability measures
  $(\nu_{\alpha,t})_{t<t_0}$ have a weak limit when $t\to
  t_0^-$\,. This limit, denoted by~$\nu_\alpha$\,, only
  charges~$\BM$\,, and satisfies $\nu_\alpha(\up x)>0$ for every
  $x\in\M_\alpha$\,.
\end{lemma}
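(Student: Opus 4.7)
The plan is to exploit the inversion formula of Theorem~\ref{thr:4} to extract the asymptotic behaviour of the growth series $Z_\alpha$ near $t_0$, and then to derive weak convergence from pointwise convergence on the clopen $\pi$-system of full elementary cylinders that generates the Borel \slgb\ of~$\Mbar$.

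First I would apply Cramer's rule to $\mu(t) Z(t) = I$, writing each entry $Z_{\alpha, \beta}(t) = P_{\alpha, \beta}(t)/\theta(t)$ for a polynomial $P_{\alpha, \beta} \in \bbZ[t]$, and summing over $\beta \in \XS$ to obtain $Z_\alpha(t) = Q_\alpha(t)/\theta(t)$ for some polynomial~$Q_\alpha$. Since $\theta(0) = \det I = 1$ and $t_0$ is the smallest positive root of~$\theta$ (Theorem~\ref{thr:6}), one has $\theta(t) > 0$ on $(0, t_0)$. Since the coefficients of $Z_\alpha$ are non negative and $t_0$ is both its radius of convergence and a singularity, writing $\theta(t) = (t_0 - t)^k \tilde\theta(t)$ and $Q_\alpha(t) = (t_0 - t)^{m_\alpha} \tilde Q_\alpha(t)$ in standard form near $t_0$, the multiplicity must satisfy $m_\alpha < k$, yielding
\begin{gather*}
  Z_\alpha(t) \sim \frac{c_\alpha}{(t_0 - t)^{k - m_\alpha}}, \qquad t \to t_0^-,
\end{gather*}
with $c_\alpha > 0$.

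The delicate step, and where irreducibility enters, is to show that the exponent $k - m_\alpha$ does not depend on~$\alpha$. Whenever $\alpha \leads \beta$ via a trace $y \in \M_{\alpha, \beta}$, the inclusion $y \cdot \M_\beta \subseteq \M_\alpha$ yields $Z_\alpha(t) \geq t^{|y|} Z_\beta(t)$ on $(0, t_0)$; irreducibility supplies a symmetric inequality in the reverse direction, so $Z_\alpha(t)/Z_\beta(t)$ remains bounded between two positive constants as $t \to t_0^-$, forcing $k - m_\alpha = k - m_\beta$. With this identification, Lemma~\ref{lem:5} gives, for $x \in \M_\alpha$ with $\beta = \alpha \cdot x$,
\begin{gather*}
  \nu_{\alpha, t}(\Up x) = t^{|x|} \frac{Z_\beta(t)}{Z_\alpha(t)} \xrightarrow[t \to t_0^-]{} t_0^{|x|} \frac{c_\beta}{c_\alpha} > 0,
\end{gather*}
while $\nu_{\alpha, t}(\Up x) = 0$ for every $t$ when $x \notin \M_\alpha$ because $\M_\alpha$ is downward closed (Proposition~\ref{prop:3}).

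To conclude, I would use that $\Mbar$ is compact metrisable (\S~\ref{sec:compactness}), so the space of probability measures on $\Mbar$ is weakly sequentially compact. Any subsequential weak limit $\nu^*$ of $(\nu_{\alpha, t})$ as $t \to t_0^-$ satisfies $\nu^*(\Up x) = \lim_{t \to t_0^-} \nu_{\alpha, t}(\Up x)$ by the Portmanteau theorem, since each $\Up x$ is clopen in $\Mbar$ (\S~\ref{sec:topology-m-mbar}). Hence all subsequential limits agree on the generating $\pi$-system $\{\emptyset\} \cup \{\Up x : x \in \M\}$ of the Borel \slgb\ of $\Mbar$ (\S~\ref{sec:slgb}), and therefore on the whole \slgb, so the full family converges weakly to a single limit $\nu_\alpha$ with $\nu_\alpha(\Up x) = t_0^{|x|} c_\beta/c_\alpha > 0$ for every $x \in \M_\alpha$, where $\beta = \alpha\cdot x$. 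Applying the same Portmanteau argument to the clopen singletons $\{x\}$ yields $\nu_\alpha(\{x\}) = \lim_t t^{|x|}/Z_\alpha(t) = 0$, so $\nu_\alpha(\M) = 0$ by countable additivity, meaning $\nu_\alpha$ is supported on $\BM$ and $\nu_\alpha(\up x) = \nu_\alpha(\Up x) > 0$ for every $x \in \M_\alpha$. The main obstacle is precisely the equalisation of pole orders in the previous paragraph: irreducibility is what supplies the two-sided comparison $Z_\alpha \asymp Z_\beta$ that rules out the degenerate limits $0$ and $+\infty$ for $\nu_{\alpha, t}(\Up x)$.
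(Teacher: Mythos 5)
Your proof is correct, but the analytic core is genuinely different from the paper's. The paper extracts a subsequential weak limit by compactness, applies the Portemanteau theorem to the clopen sets $\{x\}$ and $\Up x$ exactly as you do, and then handles the ratio $Z_{\alpha\cdot x}(t)/Z_\alpha(t)$ directly: it asserts that a ratio of power series with non-negative coefficients has a limit in $[0,+\infty]$ as $t\to t_0^-$, rules out $+\infty$ because one subsequential limit is a finite probability, and rules out $0$ by the same irreducibility trick you use (a return trace $y$ with $\beta\cdot y=\alpha$ turns the finiteness of $\lim Z_\alpha/Z_\beta$ into the positivity of $\lim Z_\beta/Z_\alpha$). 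You instead route everything through the inversion formula: Cramer's rule applied to $\mu(t)Z(t)=I$ exhibits each $Z_\alpha$ as a rational function $Q_\alpha/\theta$, Pringsheim plus rationality forces $t_0$ to be a pole, and the two-sided comparison $Z_\alpha\asymp Z_\beta$ from irreducibility equalises the pole orders, giving existence and positivity of $\lim Z_\beta(t)/Z_\alpha(t)=c_\beta/c_\alpha$ in one stroke. What your version buys is rigour and explicitness at the two points the paper leaves implicit: it proves rather than asserts that $Z_\alpha(t)\to\infty$ (needed for $\nu_\alpha(\M)=0$), and it replaces the delicate claim about limits of ratios of non-negative series (which is false for general power series and really relies on rationality here) by a concrete pole computation; as a bonus it identifies the Parry cocycle as $\Gamma(\alpha,\beta)=c_\beta/c_\alpha$. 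What the paper's version buys is brevity and independence from Theorem~\ref{thr:6} and the determinant machinery. One small caveat: your two-sided bound $Z_\alpha(t)\ge t^{|y|}Z_\beta(t)$ yields positive constants only for $t$ bounded away from $0$, which is harmless since $t\to t_0^->0$, but is worth saying.
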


\begin{proof}
  First, we fix $\alpha\in\XS$.  Since $\Mbar$ is compact, there
  exists a probability measure $\nu_\alpha$ on $\Mbar$ which is a weak
  limit of a sequence $(\nu_{\alpha,t_n})_{n\geq0}$\,, for $t_n\to
  t_0^-$\,.

  For $x\in\M$, the singleton $\{ x\}$ is both open and closed
  in~$\Mbar$. It has thus an empty topological boundary. Therefore, by
  the Portemanteau theorem~\cite{billingsley95}, the following
  equality hods:
\begin{gather*}
  \nu_\alpha(\{x\})=\lim_{n\to\infty}\nu_{\alpha,t_n}(\{x\})=\lim_{n\to\infty}\frac {t_n^{|x|}}{Z_\alpha(t_n)}=0
\end{gather*}
since $t_n\leq t_0\leq 1$ on the one hand, and since $\lim_{t\to
  t_0}Z_\alpha(t)=\infty$ on the other hand. The set $\M$ is
countable, and thus
$\nu_\alpha(\M)=0$. 

The elementary cylinder $\Up x$ is also both open and closed
in~$\Mbar$. Using again the Portemanteau theorem, we have thus:
\begin{gather*}
  \nu_\alpha(\Up x)=\lim_{n\to\infty}\nu_{\alpha,t_n}(\Up x)\,.
\end{gather*}
Since $\nu_\alpha(\M)=0$, it follows:
\begin{gather*}
  \nu_\alpha(\up x)=\lim_{n\to\infty}\nu_{\alpha,t_n}(\Up
  x)=t_0^{|x|}\lim_{n\to\infty}\frac{Z_{\alpha\cdot x}(t_n)}{Z_\alpha(t_n)}\,,
\end{gather*}
the last equality by virtue of Lemma~\ref{lem:5}.

Focus now on the ratio $Z_{\alpha\cdot x}(t)/Z_\alpha(t)$\,. It is a
ratio of power series with non negative coefficients. It has therefore
a limit, for $t\to t_0^-$\,, which is either zero, or a positive real,
or~$+\infty$. We have just seen that, for some sequence $t_n\to
t_0^-$\,, the ratios $Z_{\alpha\cdot x}(t_n)/Z_\alpha(t_n)$ have a non
negative limit. We deduce:
\begin{gather}
\label{eq:38}
  \lim_{t\to t_0^-}\frac{Z_{\alpha\cdot x}(t)}{Z_\alpha(t)}\in[0,\infty)
\end{gather}

It also follows that the probability measure $\nu_\alpha$ has its
values on elementary cylinders $\up x$ entirely determined by these
limits, independently of the sequence~$(t_n)_n$\,. Therefore the
family $(\nu_{\alpha,t})_{t<t_0}$ has $\nu_\alpha$ as weak limit when
$t\to t_0^-$\,.

It remains only to show $\nu_\alpha(\up x)>0$ for
$x\in\M_\alpha$\,. It amounts to showing that the limit
in~(\ref{eq:38}) is positive. Let $\beta=\alpha\cdot x$. Since the
action is irreducible, there exists $y\in\M_\beta$ such that
$\alpha=\beta\cdot y$. Applying~(\ref{eq:38}) to $\beta$ and $y$ in
place of $\alpha$ and~$x$, we have:
\begin{gather*}
  \lim_{t\to t_0^-}\frac{Z_{\beta\cdot y}(t)}{Z_\beta(t)}<\infty
\end{gather*}
which also writes as:
\begin{gather}
\label{eq:42}
  \lim_{t\to t_0^-}\frac{Z_{\alpha\cdot x}(t)}{Z_\alpha(t)}>0\,.
\end{gather}
This completes the proof of the lemma.
\end{proof}

\subsubsection{The Parry cocycle}
\label{sec:parry-cocycle-1}

The above lemma has the following consequence.

\begin{corollary}
  \label{cor:3}
For each pair of states $(\alpha,\beta)\in\XS\times\XS$ of an
irreducible concurrent system $(\M,\XS,\bot)$, the following limit:
\begin{gather}
\label{eq:37}
  \Gamma(\alpha,\beta)=\lim_{t\to t_0^-}\frac{Z_\beta(t)}{Z_\alpha(t)}
\end{gather}
exists and lies in $(0,\infty)$.
\end{corollary}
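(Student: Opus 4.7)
The plan is to read this corollary as essentially a direct repackaging of equations~(\ref{eq:38}) and~(\ref{eq:42}) from the proof of Lemma~\ref{lem:6}, obtained by exploiting irreducibility to realise every target state $\beta$ as $\alpha\cdot x$ for some $x\in\M_\alpha$\,.

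First, I would observe that by the irreducibility assumption, for any pair of states $(\alpha,\beta)\in\XS\times\XS$ with $\alpha\neq\beta$ there exists a trace $x\in\M_\alpha$ such that $\alpha\cdot x=\beta$. (In the case $\alpha=\beta$ one can take $x=1$, giving the trivial limit~$1$; alternatively, use any $x\in\M_\alpha$ with $\alpha\cdot x=\alpha$, which exists by irreducibility applied to~$\alpha$.) Fix such an~$x$. Then the series $Z_\beta(t)$ is literally $Z_{\alpha\cdot x}(t)$, and the ratio to be studied is
\begin{gather*}
\frac{Z_\beta(t)}{Z_\alpha(t)}=\frac{Z_{\alpha\cdot x}(t)}{Z_\alpha(t)}.
\end{gather*}

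Next I would invoke (\ref{eq:38}), which was established inside the proof of Lemma~\ref{lem:6} and asserts that the right-hand side has a limit in $[0,\infty)$ as $t\to t_0^-$\,. This gives the existence of $\Gamma(\alpha,\beta)$ and a finite upper bound. Then I would invoke (\ref{eq:42}), the companion inequality proved in the same lemma by swapping the roles of $\alpha$ and~$\beta$ via irreducibility, to conclude that this limit is strictly positive. Combining the two gives $\Gamma(\alpha,\beta)\in(0,\infty)$, which is the content of the corollary.

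There is no genuine obstacle: the content of the corollary has already been established, modulo presentation, in the final paragraph of the proof of Lemma~\ref{lem:6}. The only mild point worth spelling out explicitly is that the limit $\Gamma(\alpha,\beta)$ depends on $\alpha$ and $\beta$ alone and not on the choice of the intermediate trace~$x$, which is automatic since the ratio $Z_\beta(t)/Z_\alpha(t)$ makes no reference to~$x$; the choice of $x$ only serves to certify that the limit falls in~$(0,\infty)$ by bringing Lemma~\ref{lem:6} to bear.
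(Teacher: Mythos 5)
Your proof is correct and follows exactly the paper's own route: the paper likewise writes $\beta=\alpha\cdot x$ using irreducibility and then cites~(\ref{eq:38}) and~(\ref{eq:42}) from the proof of Lemma~\ref{lem:6} for existence, finiteness and strict positivity of the limit. Your additional remarks on the $\alpha=\beta$ case and on the independence of the choice of $x$ are harmless clarifications that do not change the argument.
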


\begin{proof}
Since the concurrent system  is irreducible, every $\beta\in\XS$ writes as
$\beta=\alpha\cdot x$ for some $x\in\M_\alpha$\,. The statement of the
corollary was then shown in the course of the proof of
Lemma~\ref{lem:6}, in~(\ref{eq:38}) and in~(\ref{eq:42}).
\end{proof}

\begin{definition}
  \label{def:13}
For an irreducible concurrent system $(\M,\XS,\bot)$, the Parry
cocyle is the function $\Gamma(\cdot,\cdot):\XS\times\XS\to(0,\infty)$
defined by\/~\eqref{eq:37}.
\end{definition}

The term ``cocyle'' is appropriate, since the definition~(\ref{eq:37}) of
$\Gamma(\cdot,\cdot)$ makes obvious the two following properties:
\begin{align}
\label{eq:39}
  \forall\alpha\in\XS\quad\Gamma(\alpha,\alpha)&=1\\
\label{eq:40}
\forall(\alpha,\beta,\gamma)\in\XS\times\XS\times\XS\quad
\Gamma(\alpha,\gamma)&=\Gamma(\alpha,\beta)\Gamma(\beta,\gamma)
\end{align}

\subsubsection{Uniform measure}
\label{sec:uniform-measure-1}

We can now combine the results obtained so far to obtain the following
theorem.

\begin{theorem}
  \label{thr:5}
Let $(\M,\XS,\bot)$ be an irreducible concurrent system, with characteristic root $t_0$ and Parry cocycle $\Gamma(\,\cdot\,,\,\cdot\,)$.  Then there
exists a Markov measure, denoted $\nu=(\nu_\alpha)_{\alpha\in\XS}$ and
called the uniform measure, satisfying the following property:
\begin{gather}
\label{eq:41}
  \forall\alpha\in\XS\quad\forall x\in\M_\alpha\quad\nu_\alpha(\up x)=
t_0^{|x|}\Gamma(x,\alpha\cdot x)\,,
\end{gather}
and $\nu_\alpha(\up x)=0$ if $x\notin\M_\alpha$\,.  In particular, the
support of $\nu$ coincides with the support of the partial action.
\end{theorem}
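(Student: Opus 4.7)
The plan is to take $\nu = (\nu_\alpha)_{\alpha\in X}$ to be the family of weak limits produced by Lemma~\ref{lem:6} and simply check that it is Markov in the sense of Definition~\ref{def:4}, with the explicit value~\eqref{eq:41}. The explicit value comes for free: for $x \in \M_\alpha$, Lemma~\ref{lem:5} gives
\begin{gather*}
  \nu_{\alpha,t}(\Up x) = t^{|x|}\frac{Z_{\alpha\cdot x}(t)}{Z_\alpha(t)}\,,
\end{gather*}
and the proof of Lemma~\ref{lem:6} shows that $\nu_\alpha$ charges no point of $\M$, so $\nu_\alpha(\up x) = \lim_{t\to t_0^-}\nu_{\alpha,t}(\Up x)$. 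Taking the limit and invoking the definition of the Parry cocycle (Corollary~\ref{cor:3}) yields $\nu_\alpha(\up x) = t_0^{|x|}\Gamma(\alpha,\alpha\cdot x)$. For $x \notin \M_\alpha$ (i.e.\ $\alpha\cdot x = \bot$), we have $\Up x \cap \M_\alpha = \emptyset$, hence $\nu_{\alpha,t}(\Up x) = 0$ for every $t < t_0$, which passes to the limit to give $\nu_\alpha(\up x) = 0$.

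Next I verify the chain rule~\eqref{eq:4}. Fix $\alpha \in X$ and $x,y \in \M$. If $x \in \M_\alpha$ and $y \in \M_{\alpha\cdot x}$, then $x\cdot y \in \M_\alpha$ and $\alpha\cdot(x\cdot y) = (\alpha\cdot x)\cdot y$; using $|x\cdot y| = |x| + |y|$ and the cocycle identity~\eqref{eq:40},
\begin{align*}
  \nu_\alpha(\up(x\cdot y)) &= t_0^{|x|+|y|}\Gamma(\alpha,(\alpha\cdot x)\cdot y)\\
&= t_0^{|x|}\Gamma(\alpha,\alpha\cdot x)\cdot t_0^{|y|}\Gamma(\alpha\cdot x,(\alpha\cdot x)\cdot y)\\
&= \nu_\alpha(\up x)\,\nu_{\alpha\cdot x}(\up y)\,.
\end{align*}
Otherwise $x\cdot y \notin \M_\alpha$, which (by the downward-closedness of $\M_\alpha$, Proposition~\ref{prop:3}) forces either $x \notin \M_\alpha$ or $y \notin \M_{\alpha\cdot x}$; in both cases the right-hand side vanishes and so does the left-hand side. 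Hence $\nu$ is a Markov measure.

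Finally, the support statement is immediate: positivity of $\Gamma$ (Corollary~\ref{cor:3}) and of $t_0$ imply $\nu_\alpha(\up x) > 0$ if and only if $x \in \M_\alpha$, which is exactly~\eqref{eq:33}. The main obstacle is really packaged into the analytic work already done in Lemmas~\ref{lem:5}--\ref{lem:6} and Corollary~\ref{cor:3}, namely the existence and finiteness of the ratio limits $Z_{\alpha\cdot x}(t)/Z_\alpha(t)$ as $t\to t_0^-$; once those are in hand, the Markov property here is a purely formal consequence of the multiplicativity of $t_0^{|\cdot|}$ and the cocycle relation for $\Gamma$.
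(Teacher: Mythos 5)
Your proposal is correct and follows essentially the same route as the paper: define $\nu_\alpha$ as the weak limit from Lemma~\ref{lem:6}, obtain~\eqref{eq:41} by passing to the limit in Lemma~\ref{lem:5} (Portemanteau, since $\Up x$ is clopen and $\nu_\alpha$ does not charge~$\M$), use downward-closedness of $\M_\alpha$ for the null case, and verify the chain rule via the cocycle identity~\eqref{eq:40}. Nothing is missing.
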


\begin{proof}
  According to Lemma~\ref{lem:6}, we shall define~$\nu_\alpha$\,, for
  each $\alpha\in\XS$, as the weak limit of $\nu_{\alpha,t}$ for $t\to
  t_0^-$\,, and then put $\nu=(\nu_\alpha)_{\alpha\in \XS}$ \,.

  The relation~(\ref{eq:41}) follows from an application of the
  Portemanteau theorem as in the proof of Lemma~\ref{lem:6}, starting
  from the result of Lemma~\ref{lem:5} and taking the limit $t\to
  t_0^-$\,. Since the Parry cocyle is positive, $\nu_\alpha(\up
  x)>0$~for all $x\in\M_\alpha$\,.

  If $x\notin\M_\alpha$\,, then $\nu_{\alpha,t}(\Up x)=0$ for all
  $t<t_0$ by the very definition of~$\nu_{\alpha,t}$ and since
  $\M_\alpha$ is downward closed according to
  Proposition~\ref{prop:3}. Therefore, passing to the limit still
  legitimated by the Portemanteau theorem, we obtain $\nu_\alpha(\up
  x)=0$. Hence $\nu=(\nu_\alpha)_{\alpha\in\XS}$ has same support as
  the action.

It remains to show that $\nu$ thus defined is Markovian. For this, we verify
 the validity of the defining relation:
 \begin{gather}
   \forall\alpha\in\XS\quad\forall x,y\in\M\quad\nu_\alpha(\up(x\cdot
   y))=\nu_\alpha(\up x)\nu_{\alpha\cdot x}(\up y)\,.
 \end{gather}
 If $x\notin\M_\alpha$ or if $y\notin\M_{\alpha\cdot x}$\,, then both
 members of the above equality are zero. And for $x\in\M_\alpha$ and
 $y\in\M_{\alpha\cdot x}$\,, using the cocyle property~(\ref{eq:40}),
 we have:
\begin{align*}
  \nu_\alpha(\up (x\cdot y))&=t_0^{|x\cdot y|}\Gamma(\alpha,\alpha\cdot
  x\cdot y)\\
&=t_0^{|x|}\Gamma(\alpha,\alpha\cdot x) t_0^{|y|}\Gamma(\alpha\cdot
x,(\alpha\cdot x)\cdot y)\\
&=\nu_\alpha(\up x)\nu_{\alpha\cdot x}(\up y)
\end{align*}
The proof is complete.
\end{proof}

\subsection{Example: uniform measure of a Rabati tiling}
\label{sec:unif-meas-rabati}

We illustrate the construction of the uniform measure on the simple
example already encountered in~\S~\ref{sec:example-mark-meas} of the
$4$-Rabati tiling in line.

\subsubsection{Continuing the direct approach}
\label{sec:cont-brute-force}

Let us tackle the problem of determining the uniform measure directly
through its M\"obius fibred valuation $F=(f_\gamma)_{\gamma\in\C}$\,.

For symmetry reasons, it is clear that $F$ satisfies the conditions
$f_1(a)=f_1(c)$ and $f_{a\cdot c}(a)=f_{a\cdot c}(c)$, hence it fits
within the framework previously studied
in~\S~\ref{sec:example-mark-meas}.

At the end of~\S~\ref{sec:solutions}, we were left with the only
parameter $q=f_1(b)$, from which all other values characterising the
M\"obius valuation $F$ can be deduced. Given the special form of the
uniform measure, we have on the one hand:
\begin{align*}
  f_1(b)&=t_0\Gamma(1,b)\\
f_b(b)&=t_0\Gamma(b,1)
\end{align*}
and on the other hand, by the cocyle property~(\ref{eq:40}) and since $f_b(b)=1$:
\begin{align*}
  \Gamma(b,1)&=t_0^{-1}&\Gamma(1,b)&=\Gamma(b,1)^{-1}
\end{align*}
Therefore:
\begin{align*}
  q&=t_0\Gamma(1,b)=t_0^2
\end{align*}

Applying the same reasoning for $p=f_1(a)$ we have, using that $f_a(a)=r'=1$:
\begin{align*}
  p&=t_0\Gamma(1,a)&1&=t_0\Gamma(a,1)=t_0\Gamma(1,a)^{-1}=\frac{t_0^2}{p}
\end{align*}
Therefore $p=t_0^2=q$\,. But we also found $1-2p-q+p^2=0$, therefore:
\begin{align*}
  1-3p+p^2=0
\end{align*}
and we recognise that $p=\frac{3-\sqrt5}2$ is the only root in $(0,1)$ of the M\"obius
polynomial of the monoid $\M=\langle a,b,c\;|\; ac=ca\rangle$. The
characteristic root of the action is:
\begin{gather*}
  t_0=\sqrt{\frac{3-\sqrt5}{2}}\,.
\end{gather*}

Since we have obtained the value of~$t_0$ and we can determine the
M\"obius fibred valuation, we can deduce the values of the Parry
cocyle, through the formula
$f_\gamma(x)=t_0^{|x|}\Gamma(\gamma,\varphi(\gamma,x))$. By the cocyle
property, it is enough to give the values of $\Gamma(1,\cdot)$:
\begin{align*}
\Gamma(1,a)&=t_0&\Gamma(1,b)&=t_0&\Gamma(1,c)&=t_0&\Gamma(1,a\cdot c)&=t_0^2
\end{align*}
We obtain thus the following formula for the cocycle:
\begin{gather}
\label{eq:46}
  \forall\gamma,\gamma'\in\C\quad\Gamma(\gamma,\gamma')=t_0^{|\gamma'|-|\gamma|}\,.
\end{gather}

We can check that we obtain the same value for $\pr_1(\up x)$\,, with
$x=a^2\cdot c^2\cdot b^2\cdot a$\,, as we found in~(\ref{eq:48}) and in~(\ref{eq:49}):
\begin{gather*}
  \pr_1(\up x)=t_0^{|x|}\Gamma(1,\varphi(1,x))=t_0^7\Gamma(1,a)=t_0^8\,,
\end{gather*}
which corresponds to the value $p^3q$ found previously, since $p=q=t_0^2$\,.

\subsubsection{Using the theta polynomial}
\label{sec:using-theta-polyn}

We could also have obtained the value of $t_0$ by applying the result
of Theorem~\ref{thr:6}, which implies to determine the theta
polynomial of the action. The M\"obius single fibred polynomial
(Definition~\ref{def:10} in~\S~\ref{sec:single-fibred-series}) is the
following matrix indexed by~$\C$:
\begin{align*}
  \mu(t)=
  \begin{array}{c}
    1\\a\\b\\c\\a\cdot c
  \end{array}
  \begin{pmatrix}
    1&-t&-t&-t&t^2\\
    -t&1&0&t^2&-t\\
    -t&0&1&0&0\\
    -t&t^2&0&1&-t\\
    t^2&-t&0&-t&1
  \end{pmatrix}
\end{align*}
The theta polynomial is the determinant of the above matrix, found to
be equal to:
\begin{gather*}
  \theta(t)=(1-3t^2+t^4)(1-t^2)^2
\end{gather*}

\emph{Via} the result of Theorem~\ref{thr:6}, we recover that $t_0^2$
is the root in $(0,1)$ of the M\"obius polynomial $1-3p+p^2$ of the
monoid. With this method however, the apparition of the M\"obius
polynomial seems miraculous. But we shall obtain yet another time this
result by studying more generally tip-top actions, of which Rabati
tiling actions are a particular case
(\S~\ref{sec:tip-top-action}--\ref{sec:rabati-tiling}). The occurrence
of the M\"obius polynomial will then be given a more natural
explanation.

\subsection{Example: uniform measure of the tip-top action}
\label{sec:uniform-measure-tip}

In this subsection, we study the uniform measure associated with the
tip-top action of a trace monoid over its set of cliques
(\S~\ref{sec:tip-top-action}). 

We recall from~\S~\ref{sec:rabati-tiling} that the action of flips on
a Rabati tiling is a particular case of a tip-top action. Hence our
study will allow us to re-obtain results on the uniform measure of the
$4$-Rabati tiling in line, which we previously obtained by hand
through M\"obius equations and symmetry considerations
in~\S~\ref{sec:example-mark-meas} and~\S~\ref{sec:unif-meas-rabati}.

Our first result, below, shows that the presence of $1$'s on all
ascending arrows in the graph of Figure~\ref{fig:pooqjpqpq}--$(b)$ is
due to a general fact.

\begin{lemma}
\label{lem:7}
  Let\/ $\pr=(\pr_\gamma)_{\gamma\in\C}$ be a Markov measure on the
  tip-top action $\C\times\M\to\C$, and let
  $(f_\gamma)_{\gamma\in\C}$ be the associated M\"obius
  valuation. Then holds:
  \begin{gather*}
    \forall\gamma\in\C\quad\forall a\in\Sigma\quad a\leq
    \gamma\implies f_\gamma(a)\in\{0,1\}\,.
  \end{gather*}
\end{lemma}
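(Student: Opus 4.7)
The plan is to set up, via two applications of Lemma~\ref{lem:1}, an equation of the form $H(1 - pq) = pS$ with $p, q \in [0,1]$ and $H, S \geq 0$, and then extract the desired dichotomy.

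Write $\alpha := \gamma \setminus \{a\}$. Since $a \leq \gamma$ the tip-top dynamics give $\gamma \cdot a = \alpha$, and since $a \parallel \alpha$ also $\alpha \cdot a = \gamma$. Set $p := f_\gamma(a)$, $q := f_\alpha(a)$, $H := h_\gamma(\{a\})$, $H' := h_\alpha(\{a\})$. Applying Lemma~\ref{lem:1} at state $\gamma$ with clique $\{a\}$ yields $p \cdot g_\alpha(\{a\}) = H$, and at state $\alpha$ with clique $\{a\}$ yields $q \cdot g_\gamma(\{a\}) = H'$.

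The key combinatorial simplification is the computation of $g_\gamma(\{a\})$. I claim that every letter $b \in \Sigma(\gamma)$ with $b \neq a$ satisfies $b \parallel a$: indeed $b \in \Sigma(\gamma)$ forces either $b \leq \gamma$, in which case $b$ lies in the clique $\gamma$ together with $a$ and so $b \parallel a$, or $b \parallel \gamma$, in which case $b \parallel a$ in particular. Consequently the only $c' \in \Cstar$ with $\{a\} \to c'$ that is enabled at $\gamma$ is $c' = \{a\}$ itself, giving $g_\gamma(\{a\}) = H$. Substituting delivers $H' = qH$ and $g_\alpha(\{a\}) = qH + S$, where $S \geq 0$ collects the values $h_\alpha(c')$ for cliques $c'$ of letters parallel to $\alpha$ but not parallel to $a$. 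Combining the two identities yields
\[ H(1 - pq) \;=\; p\, S. \]

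It remains to extract the dichotomy. Since $p, q \in [0,1]$ we have $pq \leq 1$; if $pq = 1$ then $p = q = 1$ and we are done. In the complementary case $pq < 1$, I aim to force $p = 0$ by showing $S = 0$ and $H = 0$: the \emph{locking} feature of the tip-top action --- a letter $b$ contributing to $S$ is not parallel to $a$, so once played from $\alpha$ it removes $a$ from the enabled set of the resulting state --- together with the probability bound $S \leq 1 - q$ (since $C_1$ drawn from the sum defining $S$ satisfies $a \notin C_1$) and the decomposition $p = H + \sum_{c \ni a,\, |c| \geq 2} h_\gamma(c)$, should propagate to rule out intermediate values $p \in (0,1)$.

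The main obstacle is precisely this last step: the coupling equation alone admits spurious solutions with $p \in (0,1)$, so closing the argument will likely require either an induction on $|\gamma|$ or an iteration of Lemma~\ref{lem:1} across neighbouring states, exploiting the tip-top ``locking'' more fully to force $S = 0$ whenever $p > 0$.
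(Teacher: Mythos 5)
Your algebraic setup is correct as far as it goes: the two applications of Lemma~\ref{lem:1} are legitimate (its hypothesis $h_\alpha(1)=0$ holds because the valuation is M\"obius by Theorem~\ref{thr:1}), the observation that every letter enabled at $\gamma$ other than $a$ itself is parallel to $a$ is right, and the identity $H(1-pq)=pS$ does follow --- provided you also record that $h_\gamma(c')=0$ for every clique $c'$ containing a letter not enabled at~$\gamma$, which silently uses the convention of \S~\ref{sec:mark-meas-part} that the measure vanishes outside $\M_\gamma$ (so that $0\le h_\gamma(c')\le f_\gamma(c')=0$). But, as you admit, the proof is not finished, and the gap is real: the single relation $H(1-pq)=pS$ with $p,q\in[0,1]$ and $H,S\ge 0$ admits plenty of solutions with $p\in(0,1)$, so nothing is yet proved. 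Worse, the closing strategy you sketch --- force $p=0$ by showing $S=0$ and $H=0$ --- cannot work as stated, because $H=h_\gamma(\{a\})$ only records the probability that $C_1$ equals the singleton $\{a\}$ exactly, whereas $p=f_\gamma(a)=\sum_{c\in\Cstar\tq a\le c}h_\gamma(c)$ also collects all larger cliques containing~$a$; one can perfectly well have $H=0$ and $p>0$. Any argument confined to the singleton clique $\{a\}$ discards exactly the information needed to separate $p=0$ from $p\in(0,1)$.

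The paper closes the argument by entirely different, softer means: it uses the realisation of $\pr_\gamma$ through the finite Markov chain of states-and-cliques (Theorem~\ref{thr:1}) together with the structural ``locking'' property of the tip-top action --- starting from $\gamma$ with $a\le\gamma$, every enabled trace avoiding $a$ consists of letters parallel to $a$ and leaves $a$ enabled, so the set of states-and-cliques whose clique contains $a$ stays accessible until it is hit, and a finite-chain hitting argument forces $\pr_\gamma(\up a)\in\{0,1\}$. If you want to remain algebraic, you would have to run the Lemma~\ref{lem:1} computation not for the single clique $\{a\}$ but for all cliques containing~$a$, across all states reachable from $\gamma$ by letters parallel to~$a$, and extract a sub-stochastic fixed-point argument --- in effect re-deriving the hitting-time argument by hand. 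As it stands, the proposal is an unfinished reduction, not a proof.
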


\begin{proof}
  Let $\gamma\in\C$ and let $a\in\Sigma$ be such that
  $a\leq\gamma$. Then by induction, it is easy to see that for any
  trace $x\in\M_\gamma$ of the form $x=a_1\cdot\ldots\cdot a_n$ with
  $a_i\neq a$ for all integers $i\in\{1,\ldots,n\}$, then
  $a_i\parallel a$ holds, and $a\in\M_{\gamma\cdot x}$ also holds. In
  words: as long as $a$ has not been played, it cannot be disabled.

  Therefore, assuming that $f_\gamma(a)>0$ holds, then the Markov
  chain of states-and-cliques has positive probability to hit a
  state-and-clique where the clique involves~$a$, and the set of such
  states-and-cliques will remain accessible as long as no clique
  involving $a$ has been produced. Hence this will eventually occur
  with probability~$1$.

  Returning to the valuation point of view, it implies that the
  cylinder $\up a$ has $\pr_\gamma$-probability~$1$. Hence
  $f_\gamma(a)=1$, which was to be proved.
\end{proof}

\begin{corollary}
  \label{cor:4}
  Let $(f_\gamma)_{\gamma\in\C}$ be the M\"obius fibred valuation
  associated with the uniform measure of the tip-top action. Then
  holds:
\begin{enumerate}\tightlist
\item\label{item:13} $\forall\gamma\in\C\quad\forall a\in\Sigma\quad
  a\leq \gamma\implies f_\gamma(a)=1$\,.
\item\label{item:14} $\forall\gamma\in\C\quad f_\gamma(\gamma)=1$.
\end{enumerate}
\end{corollary}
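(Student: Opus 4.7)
The plan is to derive both assertions from the dichotomy provided by Lemma~\ref{lem:7}, combined with the strict positivity of the uniform measure on every enabled action, guaranteed by Theorem~\ref{thr:5}.

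For point~(\ref{item:13}), I fix $\gamma\in\C$ and $a\in\Sigma$ with $a\leq\gamma$. By the very definition of the tip-top action in~\S~\ref{sec:tip-top-action}, we have $a\in\Sigma(\gamma)$, hence $a\in\M_\gamma$. Therefore Theorem~\ref{thr:5} yields $f_\gamma(a)=\nu_\gamma(\up a)=t_0\,\Gamma(\gamma,\gamma\cdot a)>0$, since $t_0>0$ and since the Parry cocycle is strictly positive by Corollary~\ref{cor:3}. Now Lemma~\ref{lem:7} forces $f_\gamma(a)\in\{0,1\}$, so the only possibility left is $f_\gamma(a)=1$.

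For point~(\ref{item:14}), the case $\gamma=1$ is immediate from the normalisation $f_\gamma(1)=1$. Otherwise, I enumerate the letters of $\gamma$ in an arbitrary order as $\gamma=a_1\cdot a_2\cdots a_n$, which is legitimate since the letters of a clique pairwise commute. Applying the chain rule~\eqref{eq:6} iteratively yields the telescoping product
\[
f_\gamma(\gamma)=\prod_{i=1}^{n}f_{\gamma\cdot(a_1\cdots a_{i-1})}(a_i).
\]
A straightforward induction on $i$, based on the tip-top rule $\delta\cdot a=\delta\setminus\{a\}$ whenever $a\leq\delta$, shows that $\gamma\cdot(a_1\cdots a_{i-1})=\gamma\setminus\{a_1,\ldots,a_{i-1}\}$, and since the $a_j$ are pairwise distinct letters of~$\gamma$, one has $a_i\leq\gamma\cdot(a_1\cdots a_{i-1})$ at each step. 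Part~(\ref{item:13}) therefore applies to every factor, all equal to~$1$, and so $f_\gamma(\gamma)=1$.

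No significant obstacle is anticipated: the argument is a direct combination of Lemma~\ref{lem:7}, the defining formula~\eqref{eq:41} of the uniform measure, and the elementary combinatorial description of the tip-top action.
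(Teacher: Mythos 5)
Your proposal is correct and follows essentially the same route as the paper: point~(1) combines the support property of the uniform measure (Theorem~\ref{thr:5}) with the dichotomy of Lemma~\ref{lem:7}, and point~(2) peels off the letters of $\gamma$ one at a time via the chain rule, using the tip-top rule $\delta\cdot a=\delta\setminus\{a\}$ so that point~(1) applies to each factor. The only cosmetic difference is that you unwind the induction into an explicit telescoping product, whereas the paper phrases it as a one-step reduction followed by induction.
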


\begin{proof}
  Let $\gamma\in\C$ and let $a\in\Sigma$ be such that $a\leq\gamma$.
  By Theorem~\ref{thr:5}, the uniform measure has same support as the
  support of the action. Therefore, since $a\in\M_\gamma$ if
  $a\leq\gamma$, one has $f_\gamma(a)>0$ and thus $f_\gamma(a)=1$ by
  Lemma~\ref{lem:7}, which proves point~\ref{item:13}.

For point~\ref{item:14}, let $\gamma\in\C$. Then $\gamma$ writes as
$\gamma=a_1\cdot\ldots\cdot a_i$\,, where the $a_i$ are pairwise
parallel. By the chain rule for the fibred valuation on the one hand,
and using the result just proved on the other hand, we have:
\begin{gather*}
  f_\gamma(\gamma)=f_\gamma(a_1)f_{a_2\cdot\ldots\cdot
    a_{i}}(a_2\cdot\ldots\cdot a_i)=f_{a_2\cdot\ldots\cdot
    a_{i}}(a_2\cdot\ldots\cdot a_i).
\end{gather*}
Hence, by induction, $f_\gamma(\gamma)=1$, which was to be proved.
\end{proof}

The structure of the uniform measure of the tip-top action is then
entirely described by the following result. We re-obtain in particular
the values for the characteristic root and for the Parry cocycle found
in~\S~\ref{sec:cont-brute-force} for the $4$-Rabati tiling in line,
and also in~\S~\ref{sec:using-theta-polyn} by using the theta
polynomial for the characteristic root.

\begin{theorem}
\label{thr:7}
Let $\M$ be a trace monoid, of M\"obius polynomial~$\mu_\M$
{\normalfont(\S~\ref{sec:growth-series})}, and let $p_0\in(0,1)$ be
the root of smallest modulus of~$\mu_\M$\,.

 Then the characteristic
  root $t_0$ of the tip-top action $\C\times\M\to\C$  is given by
  $t_0=\sqrt{p_0}$\,. The Parry cocyle of the action is given by:
  \begin{gather}
\label{eq:47}
    \forall(\gamma,\gamma')\in\C\times\C\quad
    \Gamma(\gamma,\gamma')=t_0^{|\gamma'|-|\gamma|}\,.
  \end{gather}
\end{theorem}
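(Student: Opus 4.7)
The plan is to derive the cocycle formula first, convert the M\"obius condition at the empty clique into an equation for $t_0^2$, and finally close the equality $t_0 = \sqrt{p_0}$ via a kernel construction for the matrix $\mu(\sqrt{p_0})$.

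First I would obtain formula~\eqref{eq:47}. By Corollary~\ref{cor:4}, the M\"obius fibred valuation $(f_\gamma)_{\gamma\in\C}$ of the uniform measure satisfies $f_\gamma(a) = 1$ whenever $a \leq \gamma$. Rewriting via the uniform form $f_\gamma(a) = t_0\,\Gamma(\gamma, \gamma\setminus\{a\})$ forces $\Gamma(\gamma, \gamma\setminus\{a\}) = t_0^{-1}$, whence $\Gamma(\gamma', \gamma'\cup\{a\}) = t_0$ whenever $a \parallel \gamma'$, via the cocycle identities~\eqref{eq:39}--\eqref{eq:40}. Since the tip-top action is irreducible (\S\ref{sec:example}), any two cliques $\gamma, \gamma'$ are connected by a sequence of single-letter additions and removals; telescoping Gammas along such a path contributes $t_0$ per addition and $t_0^{-1}$ per removal, the signed total being $|\gamma'| - |\gamma|$, which is precisely~\eqref{eq:47}.

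Second, I would extract an equation for $t_0^2$ from the identity $h_1(1) = 0$, valid for every M\"obius fibred valuation by Theorem~\ref{thr:1}. Every clique is parallel to the empty clique, so the set of cliques enabled at~$1$ equals $\C$, with $1 \cdot c = c$; by the formula just proved, $f_1(c) = t_0^{|c|}\Gamma(1,c) = t_0^{2|c|}$. Summing,
\[
h_1(1) = \sum_{c \in \C}(-1)^{|c|}\,t_0^{2|c|} = \mu_\M(t_0^2) = 0,
\]
so $t_0^2$ is a positive root of $\mu_\M$, whence $t_0^2 \geq p_0$.

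It remains to establish $t_0 \leq \sqrt{p_0}$, and for this, by Theorem~\ref{thr:6}, it suffices to show $\theta(\sqrt{p_0}) = 0$, i.e.\ that $\mu(\sqrt{p_0})$ has a nontrivial kernel. Set $t = \sqrt{p_0}$ and define $v \in \bbR^\C$ by $v_\beta = t^{|\beta|}$; clearly $v \neq 0$. The key combinatorial input is the parametrisation of $\C \cap \M_\alpha$: a clique $c$ is enabled at $\alpha$ if and only if it decomposes as $c = c_1 \cdot c_2$ with $c_1 \subseteq \alpha$ and $c_2 \cap \alpha = \emptyset$ such that $\alpha \cup c_2 \in \C$, in which case $\alpha \cdot c = (\alpha\setminus c_1)\cup c_2$ and $|c| + |\alpha\cdot c| = |\alpha| + 2|c_2|$. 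Substituting into the matrix-vector product and separating the independent sums over $c_1$ and $c_2$ yields
\[
(\mu(t)\,v)_\alpha = t^{|\alpha|}\Bigl(\sum_{c_1 \subseteq \alpha}(-1)^{|c_1|}\Bigr)\Bigl(\sum_{\substack{c_2:\,\alpha\cup c_2 \in \C\\ c_2\cap\alpha = \emptyset}}(-1)^{|c_2|}\,t^{2|c_2|}\Bigr).
\]
For $\alpha \neq 1$ the first factor vanishes by the binomial identity; for $\alpha = 1$ the first factor equals $1$ and the second is precisely $\mu_\M(t^2) = \mu_\M(p_0) = 0$. Hence $\mu(\sqrt{p_0})\,v = 0$, so $\theta(\sqrt{p_0}) = 0$ and $t_0 \leq \sqrt{p_0}$, concluding $t_0 = \sqrt{p_0}$. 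The main obstacle I anticipate is verifying the parametrisation of the enabled cliques at $\alpha$ together with the formula for $|\alpha\cdot c|$; once that structural fact is proved, everything reduces to the binomial identity and the definition of $\mu_\M$.
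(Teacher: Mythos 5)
Your proof is correct, and it reaches the identification $t_0=\sqrt{p_0}$ by a genuinely different route than the paper. The first two stages coincide with the paper's argument up to cosmetics: the paper gets the cocycle formula in one stroke from $f_\gamma(\gamma)=1$ (point~2 of Corollary~\ref{cor:4}), giving $\Gamma(\gamma,1)=t_0^{-|\gamma|}$ directly, where you telescope single-letter steps from point~1; both then derive $f_1(\gamma)=t_0^{2|\gamma|}$ and read $h_1(1)=0$ as $\mu_\M(t_0^2)=0$. The divergence is in proving that $t_0^2$ is the \emph{smallest} root: the paper uses the positivity $h_1(\gamma)\geq0$ on $\Cstar$ together with Lemma~\ref{lem:8}, which imports the uniqueness of the uniform Bernoulli measure from an external reference, whereas you obtain the missing inequality $t_0\leq\sqrt{p_0}$ from Theorem~\ref{thr:6} by exhibiting the kernel vector $v_\beta=t^{|\beta|}$ of $\mu(\sqrt{p_0})$. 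Your structural claim about $\C\cap\M_\alpha$ is correct and easy to supply: necessity of ``every letter of $c\setminus\alpha$ is parallel to $\alpha$'' follows from the downward closure of $\M_\alpha$ (Proposition~\ref{prop:3}) applied letter by letter, while sufficiency and the formulas $\alpha\cdot c=(\alpha\setminus c_1)\cup c_2$ and $|c|+|\alpha\cdot c|=|\alpha|+2|c_2|$ come from playing the letters of $c_1$ and then those of $c_2$; the splitting of the double sum is legitimate because $\alpha\cup c_2\in\C$ already forces $c_1\parallel c_2$ for every $c_1\subseteq\alpha$. Your computation in fact gives $\mu(t)v=\mu_\M(t^2)\,e_1$ with $e_1$ the basis vector indexed by the empty clique, hence that $\theta(t)$ vanishes at every root of $\mu_\M(t^2)$ --- consistent with the factor $1-3t^2+t^4$ of the theta polynomial computed in \S~\ref{sec:using-theta-polyn}. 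What each approach buys: the paper's is shorter granted the external uniqueness result and stays on the probabilistic side; yours remains entirely inside the combinatorial machinery of the theta polynomial developed in the paper, which is otherwise only exercised on the $4$-Rabati example.
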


\begin{proof}
  Let $\nu=(\nu_\alpha)_{\alpha\in\C}$ be the uniform measure of the
  tip-top action, and let $(f_\gamma)_{\gamma\in\C}$ be the associated
  M\"obius valuation.

  Consider a clique $\gamma\in\C$. Then $\gamma$ is also an action
  enabled at state~$\gamma$, the action of which yields to the empty
  clique~$1$.  Given the result of Corollary~\ref{cor:4},
  point~\ref{item:14}, on the one hand, and the form~(\ref{eq:41}) of
  the uniform measure, we have thus:
  \begin{align*}
    f_\gamma(\gamma)&=1=t_0^{|\gamma|}\Gamma(\gamma,1)\,,&
\Gamma(\gamma,1)&=t_0^{-|\gamma|}\,.
  \end{align*}

  The cocyle relation yields
  $\Gamma(1,\gamma)=\Gamma(\gamma,1)^{-1}=t_0^{|\gamma|}$\,. Using
  again the cocyle relation, we obtain for any two cliques
  $\gamma,\gamma'\in\C$:
\begin{gather*}
  \Gamma(\gamma,\gamma')=\Gamma(\gamma,1)\Gamma(1,\gamma')=t_0^{|\gamma'|-|\gamma|}\,,
\end{gather*}
proving~(\ref{eq:47}).

We obtain also the following form for the restriction to $\C$
of~$f_1(\cdot)$:
\begin{gather*}
\forall\gamma\in\C\quad  f_1(\gamma)=t_0^{|\gamma|}\Gamma(1,\gamma)=t_0^{2|\gamma|}\,.
\end{gather*}

Let $h_1(\cdot)$ be the M\"obius transform of~$f_1(\cdot)$. Then by
the M\"obius condition for~$f_1$\,, we have $h_1(1)=0$ which is
equivalent to $\mu_\M(t_0^2)=0$\,, hence $t_0^2$ is a root
of~$\mu_\M$\,. The M\"obius conditions also impose $h_1(\gamma)\geq0$
for all non empty cliques $\gamma\in\Cstar$. Lemma~\ref{lem:8} below
shows that among the roots of~$\mu_\M$\,, only the root of smallest
modulus satisfies these conditions. This completes the proof.
\end{proof}

In the course of the above proof, we have used the following
lemma. The key ingredient is the uniqueness of uniform measures shown
in~\cite{abbesmair14}.

\begin{lemma}
\label{lem:8}
Let $\M$ be a trace monoid, of M\"obius polynomial $\mu_\M(t)$\,. Let
$t$ be a non negative real, let $f:\M\to\bbR$ be the uniform valuation
defined by $f(x)=t^{|x|}$\,, and let $h:\C\to\bbR$ be the M\"obius
transform of~$f\rest\C$\,. Assume that $h(1)=0$ and that $h(c)\geq0$
for all non empty cliques $c\in\Cstar$. Then $t$ is the root of
smallest modulus of~$\mu_\M$\,.
\end{lemma}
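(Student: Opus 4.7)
The plan is to recognise that the hypotheses exactly say that $f(x) = t^{|x|}$ is a \emph{uniform Möbius valuation} on~$\M$ (in the sense of~\S\ref{sec:mobius-valuations}), and then to conclude by invoking the uniqueness of the uniform Bernoulli measure on~$\BM$ from~\S\ref{sec:uniform-measure}.

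First, I would observe that $f : \M \to \bbR$ defined by $f(x) = t^{|x|}$ is a valuation in the sense of~\S\ref{sec:valuations}: indeed $f(1) = 1$ and $f(x \cdot y) = t^{|x| + |y|} = f(x) f(y)$, and moreover $f$ is uniform since $f(a) = t$ for every letter $a \in \Sigma$. The Möbius transform of $f \rest \C$ introduced in the statement is then, by definition, the same as the Möbius transform of the valuation~$f$ as in~\S\ref{sec:mobius-transform}. Hence the two hypotheses $h(1) = 0$ and $h(c) \geq 0$ for all $c \in \Cstar$ are precisely the conditions defining a Möbius valuation. Unfolding the definition of $h(1)$ gives
\begin{gather*}
  0 = h(1) = \sum_{c \in \C} (-1)^{|c|} t^{|c|} = \mu_\M(t),
\end{gather*}
so $t$ is already identified as a non-negative real root of~$\mu_\M$; what remains is to show that it is the one of smallest modulus.

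Second, by the characterisation of Bernoulli measures recalled in~\S\ref{sec:caract-bern-meas}, every Möbius valuation arises as the valuation of a (unique) Bernoulli measure on~$\BM$. Applied to $f$, this yields a Bernoulli measure $\pr$ such that $\pr(\up x) = t^{|x|}$ for all $x \in \M$. Since $f$ is uniform, $\pr$ is a uniform Bernoulli measure.

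Third, the uniqueness statement of~\S\ref{sec:uniform-measure} (proved in~\cite{abbesmair14}) asserts that there is exactly one Bernoulli measure whose associated valuation is uniform, namely the one whose parameter is the root $p_0$ of smallest modulus of $\mu_\M$. Comparing both expressions for $\pr(\up a)$, with $a$ any letter of $\Sigma$, forces $t = p_0$, which is the desired conclusion. The only (minor) obstacle is the bookkeeping check that the Möbius transform in the statement coincides with the one relevant to the earlier theory of Bernoulli measures; once this is observed, the whole lemma reduces to the uniqueness of the uniform measure on~$\BM$ cited above.
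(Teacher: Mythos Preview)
Your proof is correct and follows essentially the same route as the paper's: recognise that the hypotheses make $f$ a uniform M\"obius valuation, produce the corresponding Bernoulli measure, and invoke the uniqueness of the uniform Bernoulli measure to force $t=p_0$. The only cosmetic difference is that the paper obtains the Bernoulli measure by applying Theorem~\ref{thr:2} to the trivial action of $\M$ on a singleton, whereas you invoke~\S\ref{sec:caract-bern-meas} directly; the paper also spells out explicitly why the uniqueness result extends to non-irreducible trace monoids, which you take as given from~\S\ref{sec:uniform-measure}.
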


\begin{proof}
  Let $D$ be the dependence relation of the monoid $\M=\M(\Sigma,I)$,
  defined by $D=(\Sigma\times\Sigma)\setminus I$. The monoid $\M$ is
  said to be irreducible---not to be confused with the irreducibility
  of actions---if $(\Sigma,D)$ is connected as an undirected graph.

  The proof of the lemma is based on the following fact: there exists
  a unique Bernoulli measure $\nu$ on $\BM$ such that $\nu(\up
  x)=t^{|x|}$ for all $x\in\M$ and for some real $t>0$, and $t$ is the
  root $p_0$ of smallest modulus of~$\mu_\M$\,. This is proved in
  Theorem~1.6 of~\cite{abbesmair14} if $\M$ is irreducible. But it is
  true also if $\M$ is not irreducible, by considering the projection
  of $\nu$ to the component of~$\M$, of which the smallest root of the
  M\"obius polynomial coincides with~$p_0$
  (see~\cite[Prop.~4.6]{krob03} and~\cite[Prop.~1]{abbes:_unifor}).

  Now, given the real number $t$ and the valuation $f$ as in the
  statement of the lemma, we claim that there exists a Bernoulli
  measure $\nu$ on $\BM$ such that $\nu(\up x)=f(x)$. Indeed,
  considering the unique total and trivial action of $\M$ on the
  singleton~$\{*\}$, then the valuation $f$ can be seen as a fibred
  M\"obius valuation with respect to this action.  An application of
  Theorem~\ref{thr:2} yields the existence of~$\nu$. Hence $t=p_0$\,.
\end{proof}

\section{Open questions}
\label{sec:open-questions}

One of the first questions left open is the uniqueness of the uniform measure for concurrent systems. It turns out to be a much more difficult question than in the case of simple trace monoids, where the Perron-Frobenius theorem allows almost directly to conclude to the uniqueness of the uniform measure. In general, the   graph of states-and-cliques of a concurrent system does not define an irreducible adjacency matrix, even if the concurrent system is irreducible. This prevents to use the strong results from Perron-Frobenius theory to conclude to the uniqueness of the uniform measure. Yet, it seems reasonable to conjecture that, for every concurrent system $(\M,\XS,\bot)$, there exists a unique measure of the form $\nu_\alpha(\up x)=t^{|x|}\Gamma(x,\alpha\cdot x)$ for $x\in\M_\alpha$ and $\nu_\alpha(\up x)=0$ for $x\notin\M_\alpha$\,, with $\Gamma$ a positive cocyle.

In relation with the above question is the nature of the Möbius matrix $\mu(t_0)$, where $t_0$ is the characteristic root of the concurrent system. Since $\det\bigl(\mu(t_0)\bigr)=0$, the kernel of $\mu(t_0)$ has positive dimension. Actually, for any fixed state~$\alpha_0$, it is easy to see that $\bigl(\Gamma(\alpha_0,\beta)\bigr)_{\beta\in \XS}$ is a non zero vector of $\ker\bigl(\mu(t_0)\bigr)$. And conversely, if $\ker\bigl(\mu(t_0)\bigr)$ has dimension greater than~$1$, one can construct a different cocycle yielding a different uniform measure. Hence the above conjecture implies that $\dim\bigl(\ker\bigl(\mu(t_0)\bigr)\bigr)=1$.

\bibliographystyle{plain}
\bibliography{async}

\end{document}